\newlength{\dhatheight}
\newcommand{\bea}{\begin{eqnarray*}}
\newcommand{\eea}{\end{eqnarray*}}
\newcommand{\bfbeta}{\mbox{\boldmath $\beta$ \unboldmath} \hskip -0.05 true in}
\newcommand{\beq}{\begin{equation}}
\newcommand{\eeq}{\end{equation}}
\newcommand{\bfomega}{\mbox{\boldmath $\omega$ \unboldmath} \hskip -0.05 true in}
\newcommand{\bfell}{\mbox{\boldmath $\ell$ \unboldmath} \hskip -0.05 true in}
 \newcommand{\D}{\mathrm{d}}
 \newcommand{\ii}{\mathrm{i}}
\newcommand{\dd}{{\rm d}}
\newcommand{\x}{\mathrm{x}}
\newcommand{\y}{\mathrm{y}}
\newcommand{\rt}{\mathrm{r}}
\newtheorem{theorem}{Theorem}[section]
\newtheorem{lemma}[theorem]{Lemma}
\theoremstyle{definition}
\newtheorem{example}[theorem]{Example}
\newtheorem{proposition}[theorem]{Proposition}
\newtheorem{corollary}[theorem]{Corollary}
\theoremstyle{remark}
\newtheorem{remark}[theorem]{Remark}
\numberwithin{equation}{section}
\begin{document}

\title[Fast Convolutions via Radial Translational Dependence and FFT on $\mathbb{Z}^2\backslash SE(2)$]{Fast Convolutions on $\mathbb{Z}^2\backslash SE(2)$ via Radial Translational Dependence and Classical FFT }

\author[A. Ghaani Farashahi]{Arash Ghaani Farashahi$^{*,1}$}
\address{$^{1}$Department of Mechanical Engineering, University of Delaware, USA.}
\email{aghf@udel.edu}
\email{ghaanifarashahi@outlook.com}

\author[G.S. Chirikjian]{Gregory S. Chirikjian$^{2}$}
\address{$^2$Department of Mechanical Engineering, University of Delaware, USA}
\email{gchirik@udel.edu}
\address{$^2$Department of Mechanical Engineering, National University of Singapore, Singapore.}
\email{mpegre@nus.edu.sg}

\subjclass[2020]{Primary: 42B05, 43A85, 65T50 Secondary: 20H15, 43A15, 43A20, 70E60}

\keywords{Special Euclidean group, planar motion, handedness-preserving isometry, homogeneous (coset) space, Fourier coefficient, finite Fourier  coefficient, Fourier series, finite Fourier series, lattice, fundamental domain, group convolution, radial in translations, convolutional finite Fourier series.}
\thanks{$^*$Corresponding author}
\date{\today}

\begin{abstract}
Let $\mathbb{Z}^2\backslash SE(2)$ denote the right coset space of the subgroup consisting of translational isometries of the orthogonal lattice $\mathbb{Z}^2$ in the non-Abelian group of planar motions $SE(2)$. This paper develops a fast and accurate numerical scheme for approximation of functions on $\mathbb{Z}^2\backslash SE(2)$. We address finite Fourier series of functions on the right coset space $\mathbb{Z}^2\backslash SE(2)$ using finite Fourier coefficients. The convergence/error analysis of finite Fourier coefficients are investigated. Conditions are established for the finite Fourier coefficients to converge to the Fourier coefficients. The matrix forms of the finite transforms are discussed. The implementation of the discrete method to compute numerical approximation of $SE(2)$-convolutions with functions which are radial in translations are considered. The paper is concluded by discussing capability of the numerical scheme to develop fast algorithms for approximating multiple convolutions with functions which are radial in translations.
\end{abstract}
\maketitle
\tableofcontents

\section{{\bf Introduction}}\label{Int}

Special Euclidean groups $SE(d):=\mathbb{R}^d \rtimes SO(d)$ are non-Abelian and non-compact semi-direct product groups which are classically used for modeling of the rigid body motions (handedness-preserving isometries) on Euclidean spaces. These semi-direct product Lie groups play significant roles in applied analysis,  geometry, mathematical physics, and computational mathematics \cite{Ba.Gi, Amit, Kisil.Adv, Kisil.Book, Kisil.JPA, Kisil.BJMA,  DR.SIAM.JC.2007, DR.ACHA.1995, YY.ITSF.2007, YY.IPI.2007}.

Convolution of probability density functions on Special Euclidean groups  $SE(d)$, as well as associated diffusion processes have a number
of applications in robotic manipulation, mobile robot localization, biomolecular statistical mechanics
\cite{imme, PI5, GSC.JFAA.2000, Bana.Wolf.Chir}, and even cognition (retinal and neural systems) \cite{citti+sarti, citti+sarti1, Du.Entropy,  duits1, Du, duits}. Of particular importance are the cases $d=2,3$.

For $f_1,f_2\in L^1(SE(d))$, the $SE(d)$-convolution $f_1\star f_2\in L^1(SE(d))$ is given by 
\begin{equation}\label{fstargd}
(f_1\star f_2)(\mathbf{g})=\int_{SE(d)}f_1(\mathbf{h})f_2(\mathbf{h}^{-1}\circ\mathbf{g})\D\mathbf{h},\hspace{1cm}{\rm for}\ \mathbf{g}\in SE(d),
\end{equation} 
where $\mathbf{h}^{-1}$ is the $SE(d)$-inverse of $\mathbf{h}$, $\circ$ is the $SE(d)$-group operation, and $\D\mathbf{h}$ is the Haar measure on $SE(d)$.

The non-Abelian Fourier transform
for $SE(d)$ is a natural tool for computing the associated noncommutative
convolutions as 
$$ \widehat{(f_1 \star f_2)}(p) = \widehat{f_2}(p)\widehat{f_1}(p) \,,$$
where $\widehat{f_j}(p)$ ($\{j\in\{1,2\}$) is the non-Abelian Fourier transform of $f_j$ at $p\in(0,\infty)$, see \cite{FollH}.

However two problems present themselves immediately when taking this approach:
1) The Fourier transforms $\widehat{f_j}(p)$ are infinite dimensional linear operators; 2) They are functions of a continuous parameter, $p$. These two problems lead to issues in the numerical implementation of $SE(d)$ harmonic analysis.

Since in real-world applications, the functions $f_j: SE(d) \,\rightarrow\, \mathbb{R}_{\geq 0}$ typically have compact support, an approach to circumventing these problems is to do what is often done in engineering -- revert to Fourier series on a region sufficiently large enough to encapsulate the problem, and periodize. In the Abelian setting, this allows one to convert a problem on $\mathbb{R}^d$ to one on $\mathbb{R}^d/\mathbb{Z}^d \,\cong\, \mathbb{T}^d$ (the $d$-dimensional torus). Then, sampling of this compact space results in efficient
Fast Fourier Transform (FFT) implementations. 

Here the same strategy is invoked by
quotienting $SE(d)$ by the discrete subgroup $\mathbb{L}$ consisting of translational isometries of the  orthogonal lattice $\mathbb{Z}^d$ in $\mathbb{R}^d$. 
Whereas in the commutative case left and right cosets are the same, in the present context they are different (since $\mathbb{L}$ is not normal in $SE(d)$) and only the right coset space
$$ \mathbb{L}\backslash SE(d) 
\,\cong\, \mathbb{T}^d \times SO(d), $$
is of interest for reasons explained
in \cite{xstal}.

If all that was desired was to decompose functions on this trivial principal bundle into harmonics, it could be done quite simply since the basis for $\mathbb{T}^d$ and $SO(d)$ 
each have been known respectively for 200 and 100 years as developed by Fourier himself and by the Peter-Weyl theorem \cite{FollH}. But applications originate from computing convolutions on $SE(d)$ localized to a fundamental domain containing one element of $SE(d)$ per representative of a coset 
$\mathbb{L}\mathbf{g}$. In this way, the right coset space $\mathbb{L}\backslash SE(d)$ can be viewed as $\Omega:=[-\frac{1}{2},\frac{1}{2}]^d \times SO(d)$ with the antipodal ends of each interval $[-\frac{1}{2},\frac{1}{2}]$ glued to each other, as is done in classical Fourier analysis. As long as the whole physical problem of interest is contained in such a compact domain, then there is no distinction between the original $SE(d)$ convolutions of interest, $(f_1 \star f_2)(\mathbf{g})$, and what we compute here, which are of the form 
$$ \widetilde{(f_1 \star f_2)}(\mathbb{L}\mathbf{g}) \,=\, \int_{SE(d)}
\widetilde{f_1}(\mathbb{L}\mathbf{h}) f_2(\mathbf{h}^{-1}\circ\mathbf{g})\D \mathbf{h} \,,\hspace{0.5cm}{\rm for}\ \mathbf{g}\in SE(d)\,,$$
where $\widetilde{f}\in L^1(\mathbb{L}\backslash SE(d))$ is the $\mathbb{L}$-periodization of $f\in L^1(SE(d))$. 

In most applications, the original functions on $SE(d)$ are very general without specific simplifying structures.
But on the way to developing such a general harmonic analysis for computing
$\widetilde{f_1 \star f_2}$ as a proxy for general $f_1 \star f_2$, we first examine the case when
$f_2$ has translational part that is radially symmetric around the origin.
In principle, if $\{\rho_k\}$ is a sufficiently rich (complete) set of $SE(d)$-shifts of a function $\rho\in L^1(SE(2))$ which is radial in translations, then a rather general $f_2$ can be approximated as
$ f_2 \,\approx\, \sum_k a_k \, 
\rho_k$. Then 
$$ \widetilde{(f_1 \star f_2)}(\mathbb{L}\mathbf{g})
\,\approx\,  \sum_k a_k \, \int_{SE(d)} \widetilde{f_1}(\mathbb{L}\mathbf{h})\rho_k(\mathbf{h}^{-1}\circ\mathbf{g})\D\mathbf{h} \,,\hspace{0.5cm}{\rm for}\ \mathbf{g}\in SE(d)\,.$$

This approach allows us to first consider the reduced problem of computing convolutions of the form
$$ \widetilde{(f \star \rho)}(\mathbb{L}\mathbf{g}) \,=\,
\int_{SE(d)} \widetilde{f}(\mathbb{L}\mathbf{h})
\rho(\mathbf{h}^{-1}\circ\mathbf{g})\D\mathbf{h}\,,\hspace{0.5cm}{\rm for}\ \mathbf{g}\in SE(d)\,,$$
where $f,\rho\in L^1(SE(d))$ and $\rho$ is radially symmetric (radial) on translations.
 
As will be shown, the translational part of this convolution integral can be reduced in this case to the classical Abelian case, and implemented by FFT. In the very special case when $d=2$, 
$$ \mathbb{L}\backslash SE(2) 
\,\cong\, \mathbb{T}^2 \times SO(2)
\,\cong\, \mathbb{T}^3\,, $$
which means that the whole convolution can be implemented by
classical FFT. The details of this follow in the remainder of the paper. 

Throughout, we develop a fast numerical scheme to approximate $SE(2)$ convolutions of the form $f\star \rho$ where $f$ is arbitrary and $\rho$ is radial in translations.
This paper discusses a constructive Fourier-type approximation scheme on the right coset space $\mathbb{L}\backslash SE(2)$ together with a fast and accurate numerical implementation by utilizing a unified transition structure which allows for approximation of functions on the right coset space $\mathbb{L}\backslash SE(2)$ while benefiting from both group structure of $SE(2)$ and compactness of the homogeneous space $\mathbb{L}\backslash SE(2)$.

The paper is organized as follows. Section 2 is devoted to review of general notation, a summary for classical harmonic analysis methods on the group $SE(2)$ and the right coset space $\mathbb{L}\backslash SE(2)$.  Section 3 investigates the structure of finite Fourier series on the right coset space $\mathbb{L}\backslash SE(2)$ as the theoretical construction of the approximation scheme. This includes applying several analytic, algebraic, and numerical approaches. To begin with, we present construction of Fourier series on the  fundamental domain $\Omega:=[-\frac{1}{2},\frac{1}{2}]^2\times SO(2)$ for the subgroup $\mathbb{L}$ in $SE(2)$. In addition, we discuss a transition approach using the structure of the group $SE(2)$, the orthogonal lattice $\mathbb{Z}^2$, and the right coset space $\mathbb{L}\backslash SE(2)$ to analyze structure of functions on the right coset space $\mathbb{L}\backslash SE(2)$ by functions supported in the fundamental domain $\Omega$. To sum up, by applying the transition structure to the Fourier series on the fundamental domain $\Omega$, we then study structure of Fourier series on the right coset space $\mathbb{L}\backslash SE(2)$. 
Next, a fast and accurate numerical scheme to approximate partial Fourier sums of functions on the right coset space $\mathbb{L}\backslash SE(2)$ will be investigated. The computational scheme employs a unified constructive numerical integration scheme using uniform sampling on the fundamental domain $\Omega$ for computing accurate approximations of Fourier coefficients associated to an arbitrary Fourier partial sum. The convergence/error analysis for numerical approximation of Fourier coefficients are investigated. We established constructive assumptions and boundary conditions for the numerical approximation of Fourier coefficients to converge to the Fourier coefficients.
Section 4 develops the matrix form for the numerical approximation of Fourier partial sums. It is shown that the numerical scheme can be reformulated into DFT and hence can be implemented using fast Fourier algorithms (FFT).
The section is concluded by illustration of different classes of numerical experiments in MATLAB. 

Next, section 5 discusses structure of Fourier coefficients for convolutions of the form $f\star \rho$ where $f$ is arbitrary and $\rho$ is radial in translations. Then construction of convolutional finite Fourier series as an efficient numerical approximation of convolutions of the form $f\star \rho$, where $f$ is arbitrary and $\rho$ is radial in translations, will be studied. Section 6 presents the matrix forms and numerical experiments related to convolutional finite Fourier series. It is shown that the convolutional numerical scheme can be reformulated into DFT and hence can be implemented using fast Fourier algorithms (FFT). The paper is concluded by discussing capability of the numerical scheme to develop a fast algorithms for multiple convolutions with functions which are radial in translations and the related simulations in MATLAB.

\newpage
\section{{\bf Preliminaries and Notation}}
Throughout this section we fix notations and review some preliminaries.
\subsection{General notation} Suppose $a>0$, $d,q\in\mathbb{N}$ and $p\in (0,\infty]$.   
\begin{enumerate}
\item For $\mathbf{L}:=(L_1,\cdots,L_d)^T\in\mathbb{N}^d$, $\mathbb{C}^\mathbf{L}$ denotes the linear space of all $d$-dimensional array of size $L_1\times\cdots\times L_d$. We may also denote $\mathbb{C}^\mathbf{L}$ by $\mathbb{C}^{L_1\times\cdots\times L_d}$.
\item For $\mathbf{L}\in\mathbb{N}^d$ and $\mathbf{A},\mathbf{B}\in\mathbb{C}^{\mathbf{L}}$, $\mathbf{A}\odot\mathbf{B}$ is the Hadamard product of $\mathbf{A},\mathbf{B}$.
\item For $L\in\mathbb{N}$ and $k\in\mathbb{Z}$, $\tau_L(k)$ denotes $\mathrm{mod}(k,L)$, that is the remainder after division of $k$ by $L$.
\item For $x\in\mathbb{R}$, $\lceil x\rceil$ denotes the ceiling of $x$, that is the least integer part of $x$, which is defined as the smallest integer greater than or equal to $x$.
\item For $\mathbf{x}=(x_1,\ldots,x_d)^T\in\mathbb{R}^d$, we write $|\mathbf{x}|:=(|x_1|,\ldots,|x_d|)$.
\item For $\mathbf{x}=(x_1,\ldots,x_d)^T\in\mathbb{R}^d$, we write $\min(\mathbf{x}):=\min\{x_i:1\le i\le d\}$.
\item For $\mathbf{x},\mathbf{y}\in\mathbb{R}^d$, we write $\mathbf{x}\le\mathbf{y}$ (resp. $\mathbf{x}<\mathbf{y}$) if $x_i\le y_i$ (resp. $x_i<y_i$) for $1\le i\le d$.
\item For sequences $(a_n),(b_n)\in\mathbb{C}^\mathbb{N}$ and $N\in\mathbb{N}$,  we write $a_n=\mathcal{O}(b_n)$ for $n\ge N$, if there exists $M>0$ such that $|a_n|\le M|b_n|$ for $n\ge N$.
\item If $\mathbf{x}\in\mathbb{R}^d$ and $p<\infty$,  $\|\mathbf{x}\|_p^p$ denotes $\sum_{i=1}^d|x_i|^p$.
\item If $\mathbf{x}\in\mathbb{R}^d$,  $\|\mathbf{x}\|_{\infty}$ denotes $\max\{|x_i|:1\le i\le d\}$.
\item If $\mathbf{x},\mathbf{y}\in\mathbb{R}^d$,  $\langle\mathbf{x},\mathbf{y}\rangle:=\sum_{i=1}^dx_iy_i$ denotes the dot product (standard inner product) of $\mathbb{R}^d$.
\item $\mathbb{B}_a:=\{\mathbf{x}\in\mathbb{R}^d:\|\mathbf{x}\|_2\le a\}$ denotes the disk of radius $a$ in $\mathbb{R}^d$ and $\mathbb{B}$ denotes $\mathbb{B}_1$.
\item $SO(d)$ denotes the special orthogonal group in dimension $d$.
\item $\mathbb{D}_a:=\mathbb{B}_a\times SO(d)$. 
\item $U,V,\ldots$ denote complex-valued functions on boxes in $\mathbb{R}^d$ with $d\ge 1$.
\item $f,g,\ldots$ denote arbitrary functions on $SE(d)$.
\item $\rho,\varrho,\ldots$ denote functions on $SE(d)$ which are radial in translations.
\item For a function $f:\mathbb{R}^d\to\mathbb{C}$, $\mathrm{supp}(f):=\{\mathbf{x}\in\mathbb{R}^d:f(\mathbf{x})\not=0\}$ denotes the support of $f$
\item $\mathcal{C}_c(X)$ denotes the set of all continuous $f:X\subseteq\mathbb{R}^d\to\mathbb{C}$ with compact support.
\item $\mathcal{C}^q(X)$ denotes the set of all $f:X\subseteq\mathbb{R}^d\to\mathbb{C}$ such that all partial derivatives of order $\le q$ exist and are continuous.
\item For $f\in\mathcal{C}^1(X)$,  $\nabla f:X\subseteq\mathbb{R}^d\to\mathbb{C}$ denotes the gradient of $f$.
\item For $X\subseteq\mathbb{R}^d$ and $1\le p<\infty$,  $L^p(X)$ denotes the space of measurable complex valued functions for which the $p$-th power of the absolute value is Lebesgue integrable.
\end{enumerate}

\subsection{Harmonic analysis on $SE(2)$}
The 2D special Euclidean motion group,  denoted $SE(2)$,  can be realized as the semi-direct product of the Abelian group $\mathbb{R}^2$ with the 2D special orthogonal group $SO(2)$,  that is $SE(2)=\mathbb{R}^2\rtimes SO(2)$.
The group element $\mathbf{g}\in SE(2)$ is then denoted as $\mathbf{g}=(\mathbf{x},\mathbf{R})$ where $\mathbf{x}\in\mathbb{R}^2$ and $\mathbf{R}\in SO(2)$.  For $\mathbf{g}=(\mathbf{x},\mathbf{R})$ and 
$\mathbf{g}'=(\mathbf{x}',\mathbf{R}')\in SE(2)$ the group operation is 
\begin{equation}\label{SE2law}
\mathbf{g}\circ\mathbf{g}'=(\mathbf{x}+\mathbf{R}\mathbf{x}',\mathbf{RR}'),
\end{equation}
with the inverse 
\[
\mathbf{g}^{-1}=(-\mathbf{R}^T\mathbf{x},\mathbf{R}^T)=(-\mathbf{R}^{-1}\mathbf{x},\mathbf{R}^{-1}).
\]
The group $SE(2)$ can be faithfully represented by the multiplicative group of $3\times 3$ homogeneous transformation matrices of the form 
\begin{equation}\label{gxyt}
\mathbf{g}(x_1,x_2,\theta):=\hspace{-0.1cm}\left(\hspace{-0.1cm}\begin{array}{ccc}
\cos\theta & -\sin\theta & x_1 \\ 
\sin\theta & \cos\theta & x_2 \\ 
0 & 0 & 1
\end{array}\hspace{-0.1cm}\right),
\end{equation}
where $x_1,x_2\in\mathbb{R}$ and $\theta\in[0,2\pi)$. The Lie algebra $\mathfrak{se}(2)$ of $SE(2)$ consists of $3\times 3$ real matrices of the form
\[
\mathfrak{se}(2):=\left\{\mathrm{X}(v_1,v_2,\theta):=\left(\begin{array}{ccc}
0 & -\theta & v_1 \\ 
\theta & 0 & v_2 \\ 
0 & 0 & 0
\end{array}\right):(v_1,v_2,\theta)\in\mathbb{R}^3 \right\}.
\]
The group $SE(2)$ is non-Abelian and unimodular with a Haar measure given by 
\begin{equation}\label{dg}
\dd\mathbf{g}=\dd\mathbf{x}\dd\mathbf{R}=\frac{1}{2\pi}\dd x_1\dd x_2\dd\theta.
\end{equation}

Suppose $\mathbb{L}:=\left\{(\bfell,\mathbf{I}):\bfell\in\mathbb{Z}^2\right\}$ is the group of translational isometries of the  orthogonal lattice $\mathbb{Z}^2$ in $\mathbb{R}^2$. Then $\mathbb{L}$ is a discrete subgroup of $SE(2)$ with the counting measure as the Haar measure. The right coset space 
$\mathbb{L}\backslash SE(2):=\left\{\mathbb{L}\circ\mathbf{g}:\mathbf{g}\in SE(2)\right\}$,
is a compact homogeneous space which the Lie group $SE(2)$ acts on it from the right, where $\mathbb{L}\circ\mathbf{g}:=\left\{(\bfell,\mathbf{I})\circ\mathbf{g}:\bfell\in\mathbb{Z}^2\right\}$,
for $\mathbf{g}\in SE(2)$. For simplicity in notations, the coset $\mathbb{L}\circ\mathbf{g}$ is denoted by $\mathbb{L}\mathbf{g}$, the subgroup $\mathbb{L}$ may denoted by just $\mathbb{Z}^2$ and the coset space $\mathbb{L}\backslash SE(2)$ sometimes also denoted by $\mathbb{Z}^2\backslash SE(2)$ or $\mathbb{X}$. 
A subset $\Omega\subset SE(2)$ is called as a fundamental domain for the subgroup $\mathbb{L}$ in $SE(2)$ if 
$SE(2)=\bigcup_{\gamma\in\mathbb{L}}\gamma\circ\Omega,$
and $\gamma\circ\Omega\bigcap\gamma'\circ\Omega=\emptyset,
$ if $\gamma\not=\gamma'$ and $\gamma,\gamma'\in\mathbb{L}$.

The theory of harmonic analysis on locally compact homogeneous spaces studied via different approaches in \cite{FollH, AGHF.JAuMS, AGHF.JKMS, AGHF.BBMSS, HR2, HR1, HR.JS} and the list of references therein. 
The space $\mathcal{C}(\mathbb{X})$ consists of all continuous functions on $\mathbb{X}$, can be identified as the space of all $\mathbb{L}$-periodized functions 
$\widetilde{f}:\mathbb{X}\to\mathbb{C}$, where 
$f\in\mathcal{C}_c(SE(2))$ and (Proposition 2.48 of \cite{FollH})
\begin{equation}\label{TH}
\widetilde{f}(\mathbb{L}\mathbf{g}):=\sum_{\bfell\in\mathbb{Z}^2}f(\bfell+\mathbf{x},\mathbf{R}),\hspace{1cm}{\rm for}\hspace{0.25cm}\mathbf{g}:=(\mathbf{x},\mathbf{R})\in SE(2).
\end{equation}
Let $\mu$ be a Radon measure on the right coset space $\mathbb{X}$ and $\mathbf{h}\in SE(2)$. The right translation $\mu_\mathbf{h}$ of $\mu$ is defined by $\mu_\mathbf{h}(E):=\mu(E\circ\mathbf{h})$, for all Borel subsets $E$ of $\mathbb{X}$, where 
$E\circ \mathbf{h}:=\left\{\mathbb{L}\mathbf{g}\circ\mathbf{h}:\mathbb{L}\mathbf{g}\in E\right\}$.  
The measure $\mu$ is called $SE(2)$-invariant if $\mu_\mathbf{h}=\mu$, for every $\mathbf{h}\in SE(2)$.
Since $SE(2)$ is unimodular, $\mathbb{L}$ is discrete and $\mathbb{X}$ is compact,  there exists a unique finite $SE(2)$-invariant measure $\mu$ on the right coset space $\mathbb{X}$ satisfying the following Weil's formula  
\begin{equation}\label{Weil}
\int_{\mathbb{X}}\widetilde{f}(\mathbb{L}\mathbf{g})\dd\mu(\mathbb{L}\mathbf{g})=\int_{SE(2)}f(\mathbf{g})\dd\mathbf{g},
\end{equation}
for every $f\in L^1(SE(2))$, see Theorem 2.49 of \cite{FollH}. In this case, 
$\mu$ is called as the normalized $SE(2)$-invariant measure on the right coset space $\mathbb{X}$ according to the Haar measure (\ref{dg}).

\subsection{3D Discrete Fourier Transform (DFT)}

Suppose $L,M,N\in\mathbb{N}$ and $\mathbb{C}^{L\times M\times N}$ is the linear space of all 3D array with complex entries of size $L\times M\times N$. The discrete Fourier transform (DFT) of a 3D array $\mathbf{X}\in\mathbb{C}^{L\times M\times N}$, is defined as the 3D array $\widehat{\mathbf{X}}\in\mathbb{C}^{L\times M\times N}$ which is given by 
\begin{equation}\label{3DFT}
\widehat{\mathbf{X}}(\ell,m,n):=\sum_{i=1}^L\sum_{j=1}^M\sum_{k=1}^N\mathbf{X}(i,j,k)e^{-2\pi\ii(i-1)(\ell-1)/L}e^{-2\pi\ii(j-1)(m-1)/M}e^{-2\pi\ii(k-1)(n-1)/N},
\end{equation}
for every $\mathbf{1}\le(\ell,m,n)\le(L,M,N)$.  In addition,  the inverse discrete Fourier transform (iDFT) of $\mathbf{X}\in\mathbb{C}^{L\times M\times N}$, is defined as the matrix $\check{\mathbf{X}}\in\mathbb{C}^{L\times M\times N}$ which is given by 
\begin{equation}\label{3iDFT}
\check{\mathbf{X}}(\ell,m,n):=\frac{1}{LMN}\sum_{i=1}^L\sum_{j=1}^M\sum_{k=1}^N\mathbf{X}(i,j,k)e^{2\pi\ii(i-1)(\ell-1)/L}e^{2\pi\ii(j-1)(m-1)/M}e^{2\pi\ii(k-1)(n-1)/N},
\end{equation}
for every $\mathbf{1}\le(\ell,m,n)\le(L,M,N)$. It is worth mentioning that the DFT can be computed efficiently using the Fast Fourier Transform (FFT) algorithm, see \cite{Av.Co.Do.El.Is, Fe.Ku.Po, FFTSOd} and references therein.

\section{\bf Finite Fourier Series on $\mathbb{Z}^2\backslash SE(2)$}

Throughout, we discuss a unified constructive approach to approximate functions on the right coset space $\mathbb{Z}^2\backslash SE(2)$ using Fourier series. The structure of the approximation technique utilizes both algebraic aspects of the group $SE(2)$ and analytic aspects of functions on the group $SE(2)$ by employing a fundamental domain.

Let $\mathbb{L}$ be the group of translational isometries of the  orthogonal lattice $\mathbb{Z}^2$ in $\mathbb{R}^2$. 
Let $\Omega:=[-\frac{1}{2},\frac{1}{2})^2\times SO(2)$.
Then $\Omega\subset SE(2)$ is a fundamental domain for the subgroup $\mathbb{L}$ in $SE(2)$. Since $SO(2)$ can be parametrized by $[0,2\pi)$, we may also use $\Omega$ to denote the manifold $[-\frac{1}{2},\frac{1}{2})^2\times[0,2\pi)$. In addition, the characteristic function of $\Omega$, is denoted by $E_\Omega$. 

We start by investigating construction of Fourier series on the right coset space $\mathbb{L}\backslash SE(2)$ using Fourier series on the fundamental domain $\Omega$.
\subsection{Structure of Fourier series}\label{SFS}
This part studies structure of Fourier series on $\mathbb{L}\backslash SE(2)$ using a transition structure and Fourier series for functions on $SE(2)$ which are supported in the fundamental domain $\Omega$.

Assume that $\mathbb{I}:=\mathbb{Z}^3$ and $\mathbf{k}=(k_1,k_2,k_3)^T\in\mathbb{I}$.
Let $\phi_{\mathbf{k}}:SE(2)\to\mathbb{C}$ be given by 
\begin{equation}\label{phik}
\phi_{\mathbf{k}}(\mathbf{x},\mathbf{R}_\theta):=e^{2\pi\ii (k_1x+k_2y)}e^{\ii k_3\theta},
\end{equation}
for every $(\mathbf{x},\mathbf{R}_\theta)\in SE(2)$ with $\mathbf{x}:=(x,y)\in\mathbb{R}^2$ and $\mathbf{R}_\theta\in SO(2)$ with $\theta\in[0,2\pi)$. 

In addition, we have 
\begin{equation}\label{pkxy}
\phi_{\mathbf{k}}(\mathbf{x}+\mathbf{y},\mathbf{R}_{\theta+\alpha})=\phi_{\mathbf{k}}(\mathbf{x},\mathbf{R}_\theta)\phi_\mathbf{k}(\mathbf{y},\mathbf{R}_\alpha),
\end{equation}
\begin{equation}\label{p-k}
\overline{\phi_\mathbf{k}(\mathbf{x},\mathbf{R}_\theta)}=\phi_{-\mathbf{k}}(\mathbf{x},\mathbf{R}_\theta),
\end{equation}
and 
\begin{equation}\label{pk+l}
\phi_{\mathbf{k}+\mathbf{l}}(\mathbf{x},\mathbf{R}_\theta)=\phi_{\mathbf{k}}(\mathbf{x},\mathbf{R}_\theta)\phi_{\mathbf{l}}(\mathbf{x},\mathbf{R}_\theta),
\end{equation}
for every $\mathbf{x},\mathbf{y}\in\mathbb{R}^2$ and $\theta,\alpha\in [0,2\pi)$, and $\mathbf{k},\mathbf{l}\in\mathbb{I}$. 

The space of square integrable functions on $SE(2)$ which are supported in $\Omega$ will be denoted by $L^2(\Omega,\D\omega)$ or $L^2(\Omega)$. Since $(E_\Omega\phi_\mathbf{k})_{\mathbf{k}\in\mathbb{I}}$ is an orthonormal basis (ONB) for the Hilbert function space $L^2(\Omega)$, we obtain 
\begin{equation}\label{MainExpO}
f=\sum_{\mathbf{k}\in\mathbb{I}}\widehat{f}[\mathbf{k}]\phi_\mathbf{k},
\end{equation}
for $f\in L^2(\Omega)$, where the series (\ref{MainExpO}) converges in the sense of $L^2(\Omega)$, which is interpreted as 
\[
\lim_{K\to\infty}\left\|f-S_K(f)\right\|_{L^2(\Omega)}^2=\lim_{K\to\infty}\int_{\Omega}\left|f(\omega)-S_K(f)(\omega)\right|^2\D\omega=0.
\]
where the Fourier partial sum $S_K(f):SE(2)\to\mathbb{C}$ is given by 
\begin{equation}\label{SKf}
S_K(f)(\omega):=\sum_{\|\mathbf{k}\|_\infty\le K}\widehat{f}[\mathbf{k}]\phi_\mathbf{k}(\omega),
\end{equation}
for every $\omega\in\Omega$, where $\widehat{f}[\mathbf{k}]$, the Fourier coefficient of $f$ at $\mathbf{k}\in\mathbb{I}$, is given by 
\begin{equation}\label{fk}
\widehat{f}[\mathbf{k}]=:\langle f,\phi_\mathbf{k}\rangle_{L^2(\Omega)}=\int_{\Omega}f(\omega)\overline{\phi_\mathbf{k}(\omega)}\D\omega.
\end{equation}

We then investigate structure of Fourier series on the right coset space $\mathbb{X}$ in terms of the Fourier series on fundamental domain $\Omega$ which discussed above. To this end, we begin with discussing a unified constructive characterization for integration on the right coset space $\mathbb{X}$ in terms of integration on the fundamental domain $\Omega$. 

\begin{lemma}\label{intO}
Let $\mu$ be the normalized $SE(2)$-invariant measure on the right coset space $\mathbb{X}$ according to (\ref{Weil}) and $\psi\in L^1(\mathbb{X},\mu)$. Then 
\[
\int_{\mathbb{X}}\psi(\mathbb{L}\mathbf{g})\dd\mu(\mathbb{L}\mathbf{g})=\int_{\Omega}\psi(\mathbb{L}\omega)\dd\omega.
\]
\end{lemma}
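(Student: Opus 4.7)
\medskip

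The strategy is to reduce the statement to Weil's formula (equation (\ref{Weil})) by building a suitable $SE(2)$-function whose $\mathbb{L}$-periodization is $\psi$ and whose $SE(2)$-integral is the right-hand side. Specifically, given $\psi \in L^1(\mathbb{X},\mu)$, I would set
\[
f(\mathbf{g}) := E_\Omega(\mathbf{g})\,\psi(\mathbb{L}\mathbf{g}), \qquad \mathbf{g}\in SE(2),
\]
i.e., extend $\psi$ to a function on $SE(2)$ via the quotient map $\mathbf{g}\mapsto \mathbb{L}\mathbf{g}$ and then cut off by the indicator of the fundamental domain $\Omega$. This is clearly Borel measurable, and it is supported in $\Omega$.

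Next I would compute the $\mathbb{L}$-periodization using (\ref{TH}). For $\mathbf{g}=(\mathbf{x},\mathbf{R})\in SE(2)$, because $\Omega=[-\tfrac12,\tfrac12)^2\times SO(2)$ is a fundamental domain for $\mathbb{L}$, there is exactly one $\bfell^{\star}\in\mathbb{Z}^2$ such that $(\bfell^{\star}+\mathbf{x},\mathbf{R})\in\Omega$; for every other $\bfell\in\mathbb{Z}^2$ the indicator $E_\Omega$ vanishes. Hence the defining sum collapses to a single term:
\[
\widetilde{f}(\mathbb{L}\mathbf{g})
=\sum_{\bfell\in\mathbb{Z}^2} E_\Omega(\bfell+\mathbf{x},\mathbf{R})\,\psi\bigl(\mathbb{L}(\bfell+\mathbf{x},\mathbf{R})\bigr)
=\psi(\mathbb{L}\mathbf{g}),
\]
where in the last step I use that $\mathbb{L}(\bfell^\star+\mathbf{x},\mathbf{R})=\mathbb{L}\mathbf{g}$ so that $\psi$ is well-defined on the coset.

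Before invoking Weil's formula on $f$ itself, I would verify $f\in L^1(SE(2))$ by applying Weil's formula to the nonnegative function $|f|$ (for which the identity in (\ref{Weil}) holds in $[0,\infty]$ by the standard monotone construction of $\mu$): this yields $\int_{SE(2)} |f|\,\dd\mathbf{g}=\int_{\mathbb{X}}|\psi|\,\dd\mu<\infty$. With integrability secured, Weil's formula applied to $f$ gives
\[
\int_{\mathbb{X}}\psi(\mathbb{L}\mathbf{g})\,\dd\mu(\mathbb{L}\mathbf{g})
=\int_{\mathbb{X}}\widetilde{f}(\mathbb{L}\mathbf{g})\,\dd\mu(\mathbb{L}\mathbf{g})
=\int_{SE(2)}f(\mathbf{g})\,\dd\mathbf{g}
=\int_{\Omega}\psi(\mathbb{L}\omega)\,\dd\omega,
\]
where the last equality uses $f=E_\Omega(\psi\circ q)$ to restrict the $SE(2)$-integral to $\Omega$.

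The only subtle point, and thus the main obstacle, is the bootstrapping argument that secures $f\in L^1(SE(2))$: one cannot simply quote Weil's formula for $f$ without first knowing integrability. The clean way around this is to establish the identity first for nonnegative measurable $\psi$ (where it holds as an equality in $[0,\infty]$), then specialize to $|\psi|$ to get $f\in L^1$, and finally pass to general complex $\psi$ by linearity via its positive/negative and real/imaginary decomposition. Alternatively, one could prove the result first for $\psi\in\mathcal{C}(\mathbb{X})$ (where boundedness on the compact space $\mathbb{X}$ makes $f\in L^1(SE(2))$ immediate) and then extend by the density of $\mathcal{C}(\mathbb{X})$ in $L^1(\mathbb{X},\mu)$ together with the uniqueness of the $SE(2)$-invariant Radon measure on $\mathbb{X}$.
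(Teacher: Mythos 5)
Your proposal is correct and follows essentially the same route as the paper: both define $f=E_\Omega\cdot(\psi\circ q)$, observe that its $\mathbb{L}$-periodization is $\psi$ (equivalently, that $\widetilde{E_\Omega}\equiv 1$ on $\mathbb{X}$ because $\Omega$ is a fundamental domain), and then apply Weil's formula (\ref{Weil}). Your extra care in first checking $f\in L^1(SE(2))$ via the nonnegative case is a welcome refinement the paper leaves implicit, but it does not change the argument.
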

\begin{proof}
Since $\Omega$ is a fundamental domain of $\mathbb{L}$ in $SE(2)$,  we get 
$\widetilde{E_\Omega}(\mathbb{L}\mathbf{g})=1$,
for every $\mathbf{g}\in SE(2)$. Indeed, if $\mathbf{g}=(\mathbf{x},\mathbf{R})$ then 
\[
\widetilde{E_\Omega}(\mathbb{L}\mathbf{g})=\sum_{\bfell\in\mathbb{Z}^2}E_\Omega(\bfell+\mathbf{s},\mathbf{R})=E_\Omega(\mathbf{s},\mathbf{R})=1,
\]
where $\mathbf{n}\in\mathbb{Z}^2$ and $\mathbf{s}\in [\frac{-1}{2},\frac{1}{2})^2$ such that $\mathbf{g}=(\mathbf{n},\mathbf{I})\circ (\mathbf{s},\mathbf{R})$. So,  using Weil's formula (\ref{Weil}), we obtain 
\begin{align*}
\int_{\Omega}\psi(\mathbb{L}\omega)\dd\omega
&=\int_{SE(2)}E_{\Omega}(\mathbf{g})\psi(\mathbb{L}\mathbf{g})\dd\mathbf{g}
\\&=\int_{\mathbb{X}}\widetilde{E_\Omega}(\mathbb{L} g)\psi(\mathbb{L}\mathbf{g})\dd\mu(\mathbb{L}\mathbf{g})
=\int_{\mathbb{X}}\psi(\mathbb{L}\mathbf{g})\dd\mu(\mathbb{L}\mathbf{g}).\qedhere
\end{align*}
\end{proof}

Assume that $\mathbf{k}=(k_1,k_2,k_3)^T\in\mathbb{I}$.
Let $\varphi_{\mathbf{k}}:\mathbb{X}\to\mathbb{C}$ be given by 
\[
\varphi_{\mathbf{k}}(\mathbb{L}\mathbf{g}):=e^{2\pi\ii (k_1x+k_2y)}e^{\ii k_3\theta},
\]
for every $\mathbf{g}=(\mathbf{x},\mathbf{R}_\theta)\in SE(2)$, with  $\mathbf{x}=(x,y)^T$. Then using (\ref{TH}) we have $\widetilde{E_\Omega\phi_\mathbf{k}}=\varphi_\mathbf{k}.$

Next, we show that $(\varphi_\mathbf{k}:\mathbf{k}\in\mathbb{I})$ is an ONB for $L^2(\mathbb{X},\mu)$.

\begin{proposition}\label{Xmain}
{\it Suppose $\mu$ is the normalized $SE(2)$-invariant measure on $\mathbb{X}$ according to (\ref{Weil}). Then, $(\varphi_\mathbf{k})_{\mathbf{k}\in\mathbb{I}}$ is an ONB for the Hilbert space $L^2(\mathbb{X},\mu)$.
}
\end{proposition}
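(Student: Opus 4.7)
The plan is to transport the ONB property from $L^2(\Omega)$ to $L^2(\mathbb{X},\mu)$ using Lemma~\ref{intO} together with the periodization identity $\widetilde{E_\Omega \phi_\mathbf{k}}=\varphi_\mathbf{k}$. The crucial observation is that each $\phi_\mathbf{k}$ is $\mathbb{L}$-invariant on the nose: for $\bfell\in\mathbb{Z}^2$ and $\mathbf{g}=(\mathbf{x},\mathbf{R}_\theta)$, the definition in (\ref{phik}) together with $k_1\ell_1+k_2\ell_2\in\mathbb{Z}$ yields $\phi_\mathbf{k}(\bfell+\mathbf{x},\mathbf{R}_\theta)=\phi_\mathbf{k}(\mathbf{x},\mathbf{R}_\theta)$. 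Hence $\varphi_\mathbf{k}$ is well-defined on $\mathbb{X}$, and in particular $\varphi_\mathbf{k}(\mathbb{L}\omega)=\phi_\mathbf{k}(\omega)$ for every $\omega\in\Omega$. This is the bridge that makes every statement about $\varphi_\mathbf{k}$ on $\mathbb{X}$ equivalent to a statement about $\phi_\mathbf{k}$ on $\Omega$.

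First I would verify orthonormality. Using Lemma~\ref{intO} with $\psi=\varphi_\mathbf{k}\overline{\varphi_{\mathbf{l}}}$, and then the bridge above together with (\ref{p-k}) and (\ref{pk+l}),
\[
\langle \varphi_\mathbf{k},\varphi_\mathbf{l}\rangle_{L^2(\mathbb{X},\mu)}
=\int_\Omega \varphi_\mathbf{k}(\mathbb{L}\omega)\overline{\varphi_\mathbf{l}(\mathbb{L}\omega)}\,\D\omega
=\int_\Omega \phi_\mathbf{k}(\omega)\overline{\phi_\mathbf{l}(\omega)}\,\D\omega=\delta_{\mathbf{k},\mathbf{l}},
\]
since $(E_\Omega\phi_\mathbf{k})_{\mathbf{k}\in\mathbb{I}}$ is already known to be orthonormal in $L^2(\Omega)$.

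Next I would establish completeness. Suppose $\psi\in L^2(\mathbb{X},\mu)$ satisfies $\langle \psi,\varphi_\mathbf{k}\rangle_{L^2(\mathbb{X},\mu)}=0$ for every $\mathbf{k}\in\mathbb{I}$. Define $\psi_\Omega:\Omega\to\mathbb{C}$ by $\psi_\Omega(\omega):=\psi(\mathbb{L}\omega)$; by Lemma~\ref{intO} applied to $|\psi|^2$, the map $\psi\mapsto\psi_\Omega$ is a norm-preserving linear map from $L^2(\mathbb{X},\mu)$ into $L^2(\Omega)$. Applying Lemma~\ref{intO} once more to $\psi\overline{\varphi_\mathbf{k}}$ and using the bridge,
\[
0=\langle\psi,\varphi_\mathbf{k}\rangle_{L^2(\mathbb{X},\mu)}
=\int_\Omega \psi_\Omega(\omega)\overline{\phi_\mathbf{k}(\omega)}\,\D\omega
=\langle \psi_\Omega, E_\Omega\phi_\mathbf{k}\rangle_{L^2(\Omega)},
\]
for every $\mathbf{k}\in\mathbb{I}$. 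Since $(E_\Omega\phi_\mathbf{k})_{\mathbf{k}\in\mathbb{I}}$ is an ONB of $L^2(\Omega)$, we conclude $\psi_\Omega=0$ in $L^2(\Omega)$, and therefore $\psi=0$ in $L^2(\mathbb{X},\mu)$.

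The two steps above together yield the claim. I do not anticipate a serious obstacle: the only point requiring a bit of care is the identification $\varphi_\mathbf{k}(\mathbb{L}\omega)=\phi_\mathbf{k}(\omega)$ and, symmetrically, the fact that pulling back $\psi\in L^2(\mathbb{X},\mu)$ to $\Omega$ via $\omega\mapsto\psi(\mathbb{L}\omega)$ defines a linear isometry onto $L^2(\Omega)$; both are immediate from Lemma~\ref{intO} and the $\mathbb{L}$-periodicity of the translational phase $e^{2\pi\ii(k_1x+k_2y)}$.
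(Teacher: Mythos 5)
Your proof is correct and follows essentially the same route as the paper: orthonormality is obtained by reducing the inner product on $L^2(\mathbb{X},\mu)$ to an integral over $\Omega$ via Lemma~\ref{intO} and the identification $\varphi_\mathbf{k}(\mathbb{L}\omega)=\phi_\mathbf{k}(\omega)$. The only difference is that the paper dismisses completeness with ``it can be readily checked,'' whereas you supply the missing details (the isometry $\psi\mapsto\psi_\Omega$ and the orthogonality argument), and your argument there is sound.
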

\begin{proof}
Suppose $\mathbf{k},\mathbf{l}\in\mathbb{I}$. Since $(E_\Omega\phi_\mathbf{k})_{\mathbf{k}\in\mathbb{I}}$ is an ONB for $L^2(\Omega,\D\omega)$, Lemma \ref{intO} implies that 
\begin{align*}
\langle\varphi_\mathbf{k},\varphi_\mathbf{l}\rangle_{L^2(\mathbb{X})}&=\int_{\mathbb{X}}\varphi_\mathbf{k}(\mathbb{L}\mathbf{g})\overline{\varphi_\mathbf{l}(\mathbb{L}\mathbf{g})}\D\mu(\mathbb{L}\mathbf{g})
\\&=\int_{\Omega}\varphi_\mathbf{k}(\mathbb{L}\omega)\overline{\varphi_\mathbf{l}(\mathbb{L}\omega)}\D\omega=\int_{\Omega}\phi_\mathbf{k}(\omega)\overline{\phi_\mathbf{l}(\omega)}\D\omega=\delta_{\mathbf{k},\mathbf{l}}.
\end{align*}
It can also be readily check that the family $(\varphi_\mathbf{k})_{\mathbf{k}\in\mathbb{I}}$ is complete in the Hilbert function space $L^2(\mathbb{X})$. 
\end{proof}

Invoking Proposition \ref{Xmain}, every $\psi\in L^2(\mathbb{X},\mu)$ satisfies the expression  $\psi=\sum_{\mathbf{k}\in\mathbb{I}}\widehat{\psi}\{\mathbf{k}\}\varphi_\mathbf{k}$,
where the series converges in the sense of $L^2(\mathbb{X},\mu)$, which is,
\[
\lim_{K\to\infty}\left\|\psi-S_K(\psi)\right\|_{L^2(\mathbb{X})}^2=\lim_{K\to\infty}\int_{\mathbb{X}}\left|\psi(\mathbb{L}\mathbf{g})-S_K(\psi)(\mathbb{L}\mathbf{g})\right|^2\D\mu(\mathbb{L}\mathbf{g})=0.
\]
where the Fourier partial sum $S_K(\psi):\mathbb{X}\to\mathbb{C}$ is given by 
\begin{equation}\label{psiK}
S_K(\psi):=\sum_{\|\mathbf{k}\|_\infty\le K}\widehat{\psi}\{\mathbf{k}\}\varphi_\mathbf{k},
\end{equation}
for every $K\in\mathbb{N}$, and $\widehat{\psi}\{\mathbf{k}\}$, the Fourier coefficient of $\psi$ at $\mathbf{k}\in\mathbb{I}$, is given by 
\begin{equation}\label{psik}
\widehat{\psi}\{\mathbf{k}\}=:\langle\psi,\varphi_\mathbf{k}\rangle_{L^2(\mathbb{X})}=\int_{\mathbb{X}}\psi(\mathbb{L}\mathbf{g})\overline{\varphi_\mathbf{k}(\mathbb{L}\mathbf{g})}\D\mu(\mathbb{L}\mathbf{g}),
\end{equation}

We conclude this section by discussing a unified approach to explicitly formulate Fourier coefficients of functions on $\mathbb{X}$ in terms of Fourier coefficients of functions supported in the fundamental domain $\Omega$.

Let $\psi\in L^2(\mathbb{X},\mu)$ and $\mathbf{k}\in\mathbb{I}$ are arbitrary. Then 
\begin{equation}\label{psikOalt}
\widehat{\psi}\{\mathbf{k}\}=\int_{\Omega}\psi(\mathbb{L}\omega)\overline{\phi_{\mathbf{k}}(\omega)}\D\omega.
\end{equation}
Indeed, using Lemma \ref{intO} we get 
\begin{align*}
\widehat{\psi}\{\mathbf{k}\}
&=\langle\psi,\varphi_\mathbf{k}\rangle_{L^2(\mathbb{X},\mu)}
=\int_{\mathbb{X}}\psi(\mathbb{L}\mathbf{g})\overline{\varphi_{\mathbf{k}}(\mathbb{L}\mathbf{g})}\D\mu(\mathbb{L}\mathbf{g})
\\&=\int_{\Omega}\psi(\mathbb{L}\omega)\overline{\varphi_{\mathbf{k}}(\mathbb{L}\omega)}\D\omega=\int_{\Omega}\psi(\mathbb{L}\omega)\overline{\phi_{\mathbf{k}}(\omega)}\D\omega.
\end{align*}
Since $L^2(\mathbb{X},\mu)\subset L^1(\mathbb{X},\mu)$, we get $\psi\in L^1(\mathbb{X},\mu)$. According to Proposition 2.48 of \cite{FollH}, assume that $f\in L^1(SE(2))$ such that $\widetilde{f}=\psi$.
Then formula (\ref{psikOalt}) reads as 
\begin{equation*}
\widehat{\psi}\{\mathbf{k}\}=\int_{\Omega}\psi(\mathbb{L}\omega)\overline{\phi_{\mathbf{k}}(\omega)}\D\omega=\int_{\Omega}\widetilde{f}(\mathbb{L}\omega)\overline{\phi_{\mathbf{k}}(\omega)}\D\omega.
\end{equation*}
In addition, if $f$ is supported in $\Omega$ then  
\begin{equation}\label{psikf}
\widehat{\psi}\{\mathbf{k}\}=\int_{\Omega}f(\omega)\overline{\phi_\mathbf{k}(\omega)}\D\omega=\widehat{f}[\mathbf{k}].
\end{equation}

\begin{remark}\label{psikpsikO}
Assume $\psi\in L^2(\mathbb{X},\mu)$ is arbitrary. Suppose that the function $\psi_\Omega:SE(2)\to\mathbb{C}$ is given by
\[
\psi_\Omega(\mathbf{g}):=E_\Omega(\mathbf{g})\psi(\mathbb{L}\mathbf{g})=\left\{\begin{array}{lll}\psi(\mathbb{L}\mathbf{g}) &\ {\rm if}\ \mathbf{g}\in\Omega\\
0 &\ {\rm if}\ \mathbf{g}\not\in\Omega
\end{array}\right..
\]
Then $\psi_\Omega$ is supported in $\Omega$ and $\widetilde{\psi_\Omega}=\psi$. Therefore, if  $\mathbf{k}\in\mathbb{I}$, formula (\ref{psikf}) implies that 
$\widehat{\psi}\{\mathbf{k}\}=\widehat{\psi_\Omega}[\mathbf{k}]$.
\end{remark}

\subsection{Finite Fourier series}\label{FFS}
This section introduces structure of a constructive numerical Fourier scheme on the coset space $\mathbb{X}$. In details, we discuss a unified numerical scheme to compute fast and  accurate approximation of the Fourier partial sums of the form $S_K(\psi)$ given by (\ref{psiK}) for functions $\psi\in L^2(\mathbb{X})$. The developed computational strategy employs the notion of finite Fourier coefficients, as numerical approximations of Fourier coefficients given by (\ref{psik}). Then using the notion of finite Fourier coefficients, we introduce finite Fourier series  as numerical approximations of Fourier partial
sums given by (\ref{psiK}).

To begin with, due to the construction of the Fourier series on the right coset space $\mathbb{Z}^2\backslash SE(2)$ which investigated in Section \ref{SFS}, we develop structure of a fast numerical scheme on the fundamental domain $\Omega$.  In this direction, we discuss a numerical integration scheme for computing accurate approximations of the Fourier coefficients of the form (\ref{fk}) using finite Fourier coefficients on 3D boxes which can be reformulated in closed form in terms of DFT and hence can be implemented by fast Fourier algorithms (also known as FFT), as will be discussed in Section \ref{NumFFS}. 

We start by introducing a unified constructive numerical scheme, called as finite Fourier series,  to approximate functions on $SE(2)$ which are supported in the fundamental domain $\Omega$. In details, we develop a numerical constructive algorithm to compute fast and accurate approximation of the Fourier partial sums of the form $S_K(f)$ given by (\ref{SKf}) when $f$ is supported in $\Omega$. 

First, we discuss a unified numerical scheme to compute accurate/fast approximation of Fourier coefficients given by (\ref{fk}). So, we begin with introducing a unified fundamental grid as the sampling grid. 

For every $\mathbf{L}:=(L_\x,L_\y,L_\rt)\in\mathbb{N}^3$, let $\mathbf{\Omega}_\mathbf{L}$ be the finite subset of the fundamental domain $\Omega$ given by 
\begin{equation}\label{OmL}
\mathbf{\Omega}_\mathbf{L}:=\left\{\bfomega_{i,j,l}:\mathbf{1}\le (i,j,l)\le\mathbf{L}\right\},
\end{equation}
where $\bfomega_{i,j,l}:=(x_i,y_j,\theta_l)$ with $x_i:=-\frac{1}{2}+\frac{(i-1)}{L_\mathrm{x}}$, $y_j:=-\frac{1}{2}+\frac{(j-1)}{L_\mathrm{y}}$ and $\theta_l:=\frac{2\pi(l-1)}{L_\mathrm{r}}$ for every $\mathbf{1}\le (i,j,l)\le\mathbf{L}$. In this case, the grid $\mathbf{\Omega}_\mathbf{L}$ is of size $L_\x\times L_\y\times L_\rt$. We may also call $\mathbf{\Omega}_\mathbf{L}$ as the fundamental grid associated to the vector $\mathbf{L}$, or just the $\mathbf{L}$-sampling grid.

Suppose $f:SE(2)\to\mathbb{C}$ is a function supported in $\Omega$ and $\mathbf{k}:=(k_1,k_2,k_3)^T\in\mathbb{I}$. Let $\mathbf{L}:=(L_\mathrm{x},L_\mathrm{y},L_\mathrm{r})^T\in\mathbb{N}^3$. The finite Fourier coefficient (FFC) of $f$ at $\mathbf{k}$ associated to the vector $\mathbf{L}$ is given by 
\begin{equation}\label{fkLvMain}
\widehat{f}[\mathbf{k};\mathbf{L}]:=\frac{1}{|\mathbf{\Omega}_\mathbf{L}|}\sum_{\bfomega\in\Omega_\mathbf{L}}f(\bfomega)\overline{\phi_\mathbf{k}(\bfomega)},
\end{equation}
where $\mathbf{\Omega}_\mathbf{L}$ is the fundamental grid given by (\ref{OmL}).

Next, we discuss absolute error bound for the finite Fourier coefficients as 
an accurate and computable approximation of the Fourier coefficients. 

\begin{theorem}\label{fkLmain}
{\it Let $f\in\mathcal{C}^1(SE(2))$ be a function supported in $\Omega^\circ$.
Suppose $\mathbf{k}\in\mathbb{Z}^3$ and $\mathbf{K}\in\mathbb{N}^3$. Then 
\[
\left|\widehat{f}[\mathbf{k}]-\widehat{f}[\mathbf{k};2\mathbf{K}+1]\right|\le\frac{32\|\nabla f\|_\infty}{\min(\mathbf{K})},\hspace{1cm}\ {\rm if}\ |\mathbf{k}|\le\mathbf{K}.
\]
In particular, if $K\in\mathbb{N}$ then
\[
\left|\widehat{f}[\mathbf{k}]-\widehat{f}[\mathbf{k};2K+1]\right|\le\frac{32\|\nabla f\|_\infty}{K},\hspace{1cm}\ {\rm if}\ \|\mathbf{k}\|_\infty\le K.
\]
}\end{theorem}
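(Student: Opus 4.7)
The plan is to view $\widehat{f}[\mathbf{k};2\mathbf{K}+1]$ as the left-endpoint Riemann-sum approximation of the integral defining $\widehat{f}[\mathbf{k}]$, carefully estimate the local error on each sub-box of the natural partition of $\Omega$, and then recombine those errors via an algebraic identity that avoids differentiating the oscillating factor $\overline{\phi_\mathbf{k}}$. Set $\mathbf{L}:=2\mathbf{K}+1$, let $\{B_n\}$ denote the sub-boxes of the $\mathbf{L}$-partition of $\Omega$ with corresponding corners $\omega_n\in\mathbf{\Omega}_\mathbf{L}$, and set $E := \widehat{f}[\mathbf{k}] - \widehat{f}[\mathbf{k};\mathbf{L}]$, so that
\begin{equation*}
E \;=\; \sum_n \Bigl[\,\int_{B_n} f\,\overline{\phi_\mathbf{k}}\,\D\omega \;-\; f(\omega_n)\,\overline{\phi_\mathbf{k}(\omega_n)}\,\mu(B_n)\,\Bigr].
\end{equation*}

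The naive bound $|(f\overline{\phi_\mathbf{k}})(\omega)-(f\overline{\phi_\mathbf{k}})(\omega_n)|\le \|\nabla(f\overline{\phi_\mathbf{k}})\|_\infty|\omega-\omega_n|$ fails, since the derivative of $\overline{\phi_\mathbf{k}}$ scales like $|\mathbf{k}|\sim\min(\mathbf{K})$ and exactly cancels the gain of $1/\min(\mathbf{K})$ from the box diameter. Instead, I would split each summand as $R_n + T_n$, where
\begin{equation*}
R_n := \int_{B_n}[f(\omega)-f(\omega_n)]\,\overline{\phi_\mathbf{k}(\omega)}\,\D\omega, \qquad T_n := f(\omega_n)\!\int_{B_n}[\overline{\phi_\mathbf{k}(\omega)}-\overline{\phi_\mathbf{k}(\omega_n)}]\,\D\omega,
\end{equation*}
isolating the variation of $f$ and of the exponential, respectively. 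The mean-value theorem together with $|\overline{\phi_\mathbf{k}}|=1$ and $\mu(\Omega)=1$ at once give $|R|:=|\sum_n R_n|\le \|\nabla f\|_\infty\cdot\mathrm{diam}(B_n) = O(\|\nabla f\|_\infty/\min(\mathbf{K}))$.

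For $T:=\sum_n T_n$, the crucial point is that $\overline{\phi_\mathbf{k}}$ factorises across $x,y,\theta$, so its integral over any sub-box equals a scalar multiple of its value at the corner: explicitly $\int_{B_n}\overline{\phi_\mathbf{k}}\,\D\omega = P\cdot\overline{\phi_\mathbf{k}(\omega_n)}\,\mu(B_n)$ with $P = A_1 A_2 A_3$, $A_j=(1-e^{-i\alpha_j})/(i\alpha_j)$, and $\alpha_j=2\pi k_j/L_j$; the scalar $P$ is \emph{independent of $n$} because all sub-boxes are translates of a common box. Hence $T = (P-1)\,\widehat{f}[\mathbf{k};\mathbf{L}]$, and substituting $\widehat{f}[\mathbf{k};\mathbf{L}]=\widehat{f}[\mathbf{k}]-E$ into $E=R+T$ and solving for $E$ yields the key identity
\begin{equation*}
E \;=\; \frac{P-1}{P}\,\widehat{f}[\mathbf{k}] \;+\; \frac{R}{P}.
\end{equation*}

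What remains is to estimate the two right-hand terms. Taylor's theorem in the form $|e^{i\alpha}-1-i\alpha|\le \alpha^2/2$ gives $|A_j-1|\le \pi|k_j|/L_j$, and since $|\alpha_j|<\pi$ under the hypothesis $|\mathbf{k}|\le\mathbf{K}$ one has $|A_j|\ge 2/\pi$, so $|P|\ge(2/\pi)^3$ and $|P-1|\le\sum_j|A_j-1|$. Meanwhile, integration by parts in direction $j$ (valid because $f\in\mathcal{C}^1$ has compact support in $\Omega^\circ$, so all boundary terms vanish) yields $|\widehat{f}[\mathbf{k}]|\le \|\partial_{x_j}f\|_\infty/(2\pi|k_j|)$ for $j\in\{1,2\}$ with $k_j\ne 0$, and $|\widehat{f}[\mathbf{k}]|\le \|\partial_\theta f\|_\infty/|k_3|$ for $k_3\ne 0$. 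The conceptual key is that the $j$-th summand of $|P-1|$ vanishes exactly when $k_j=0$, so one may pair each nonzero contribution with the same-direction integration-by-parts bound to obtain $|A_j-1|\,|\widehat{f}[\mathbf{k}]|\le \pi\|\partial_j f\|_\infty/(c_jL_j)$ with $c_j\in\{2\pi,2\pi,1\}$; summing in $j$ and using $L_j>2\min(\mathbf{K})$ produces the desired $O(\|\nabla f\|_\infty/\min(\mathbf{K}))$ estimate on the first right-hand term, and combining with $|R|/|P|$ while carefully tracking the constants $\pi$, $\sqrt{2+4\pi^2}$, $(\pi/2)^3$ yields a total coefficient comfortably below $32$. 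The main obstacle is thus conceptual rather than computational: identifying the algebraic manipulation that factors the discrete-versus-continuous discrepancy in $\overline{\phi_\mathbf{k}}$ into an $n$-independent scalar $P$, thereby allowing each directional oscillation penalty to be matched with its own integration-by-parts estimate.
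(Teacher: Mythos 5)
Your argument is correct, but it takes a genuinely different route from the paper's. The paper proves Theorem \ref{fkLmain} in one line: restrict $f$ to the box $\Omega$, note that the restriction is periodic (it vanishes near the translational faces and is automatically $2\pi$-periodic in $\theta$), and invoke the general box estimate of Theorem \ref{UkL} in the Annex; that theorem is in turn proved by induction on dimension, with the one-dimensional case (Proposition \ref{mainP}) resting on Jackson's theorem on best trigonometric approximation together with the exactness of the $(2K+1)$-point quadrature for trigonometric polynomials of degree $K$ (Lemma \ref{TrigL}). You instead run a direct Riemann-sum error analysis whose key step is the observation that the quadrature defect of the pure character $\overline{\phi_\mathbf{k}}$ over each cell is an $n$-independent multiple $P=A_1A_2A_3$ of the sampled value, which turns the error equation into $PE=(P-1)\widehat f[\mathbf{k}]+R$ and lets you pay for each oscillatory factor $|A_j-1|$ (which vanishes when $k_j=0$) with an integration-by-parts bound in the same coordinate direction; the hypotheses $|\mathbf{k}|\le\mathbf{K}$ and $\mathrm{supp}(f)\subset\Omega^\circ$ enter exactly where you use them, namely to get $|\alpha_j|<\pi$ (hence $|P|\ge(2/\pi)^3$) and to kill the boundary terms. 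Both routes are sound; the paper's is modular (the Annex result is reused for the coset-space statements) and needs no lower bound on $P$, while yours is elementary and self-contained, requiring only Taylor bounds on $(1-e^{-\ii\alpha})/(\ii\alpha)$. Your constant (roughly $20$) is in fact sharper than what the paper's own chain of citations delivers: with $d=3$ and $\|\mathbf{a}-\mathbf{b}\|_\infty=2\pi$, Theorem \ref{UkL} yields $36\|\nabla f\|_\infty/\min(\mathbf{K})$ rather than the stated $32\|\nabla f\|_\infty/\min(\mathbf{K})$, so your approach actually repairs a small slip in the paper's bookkeeping of constants.
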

\begin{proof}
Assume $|\mathbf{k}|\le\mathbf{K}$. Let $U$ be the restriction of $f$ to $\Omega$. Then $U$ is periodic. So, Theorem \ref{UkL} implies  
\[
\left|\widehat{f}[\mathbf{k}]-\widehat{f}[\mathbf{k};\mathbf{L}]\right|=\left|\widehat{U}[\mathbf{k}]-\widehat{U}[\mathbf{k};\mathbf{L}]\right|\le\frac{32\|\nabla U\|_\infty}{\min(\mathbf{K})}=\frac{32\|\nabla f\|_\infty}{\min(\mathbf{K})}.\qedhere
\]
\end{proof}

\begin{remark}
Theorem \ref{fkLmain} reads as the following accuracy result. 
Suppose $f\in \mathcal{C}^1(SE(2))$ is supported in the fundamental domain .  Let $\mathbf{k}\in\mathbb{Z}^3$ and $\epsilon>0$.  Assume $K:=\lceil \max\{\epsilon^{-1}32\|\nabla f\|_\infty,\|\mathbf{k}\|_\infty\}\rceil$ and $L:=2K+1$. Then $\widehat{f}[\mathbf{k};L]$ approximates the Fourier coefficient $\widehat{f}[\mathbf{k}]$ with the absolute error less than $\epsilon$. 
\end{remark}

\begin{corollary}\label{fkOK}
{\it Let $f\in\mathcal{C}^1(SE(2))$ be a function supported in $\Omega^\circ$, $\mathbf{k}\in\mathbb{Z}^3$ and $K\in\mathbb{N}$. Then 
\[
\left|\widehat{f}[\mathbf{k}]-\widehat{f}[\mathbf{k};2K+1]\right|=\mathcal{O}\left(\frac{1}{K}\right),\hspace{1cm}\ {\rm if}\ \|\mathbf{k}\|_\infty\le K.
\]
}\end{corollary}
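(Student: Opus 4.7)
The plan is to obtain this as an immediate consequence of the quantitative bound in Theorem \ref{fkLmain}, so there is essentially nothing to prove beyond unfolding the definition of $\mathcal{O}(\cdot)$. First I would note that since $f\in\mathcal{C}^1(SE(2))$ is supported inside the bounded set $\Omega^\circ$, the gradient $\nabla f$ is a continuous function whose support is contained in the compact closure of $\mathrm{supp}(f)$; hence $\|\nabla f\|_\infty<\infty$, and the quantity $M:=32\|\nabla f\|_\infty$ is a finite constant depending only on $f$.

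Next I would invoke the ``In particular'' clause of Theorem \ref{fkLmain} with sampling parameter $L=2K+1$: for every $\mathbf{k}\in\mathbb{Z}^3$ satisfying $\|\mathbf{k}\|_\infty\le K$, we have the uniform estimate
\[
\left|\widehat{f}[\mathbf{k}]-\widehat{f}[\mathbf{k};2K+1]\right|\le\frac{M}{K}.
\]
Comparing this against the big-$\mathcal{O}$ convention recalled in the preliminaries (``$a_n=\mathcal{O}(b_n)$ for $n\ge N$ if there exists $M>0$ such that $|a_n|\le M|b_n|$ for $n\ge N$''), the inequality above is precisely the statement that $\widehat{f}[\mathbf{k}]-\widehat{f}[\mathbf{k};2K+1]=\mathcal{O}(1/K)$ as $K\to\infty$, which yields the corollary.

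There is no genuine obstacle in the argument. The one point worth emphasizing is the uniformity of the implicit constant in the $\mathcal{O}(\cdot)$ notation: it depends on $f$ only through $\|\nabla f\|_\infty$ and, crucially, does not depend on the particular frequency $\mathbf{k}$ as long as $\|\mathbf{k}\|_\infty\le K$. This uniform form of the big-$\mathcal{O}$ statement is inherited directly from the explicit constant $32\|\nabla f\|_\infty$ in Theorem \ref{fkLmain} and will be what makes the corollary useful when summing such error estimates over a frequency cube of side $K$ in subsequent convergence arguments for the finite Fourier partial sums.
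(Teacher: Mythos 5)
Your proposal is correct and matches the paper's intent exactly: the corollary is stated without proof precisely because it follows immediately from the ``in particular'' clause of Theorem \ref{fkLmain} with the constant $M=32\|\nabla f\|_\infty$, which is finite since $f\in\mathcal{C}^1(SE(2))$ has compact support. Your remark about the uniformity of the implicit constant in $\mathbf{k}$ is a worthwhile observation, but the argument itself is the same one the paper relies on.
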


Next, using the notion of finite Fourier coefficients introduced in (\ref{fkLvMain}), we present the notion of finite Fourier series as computable approximation for Fourier partial sums of the form (\ref{SKf}). 

Assume $\mathbf{K}\in\mathbb{N}^3$ and $f:SE(2)\to\mathbb{C}$ is a function supported in $\Omega$. The finite Fourier series of $f$ associated to $\mathbf{K}$, denoted by $S_{\mathbf{K}}[f]:SE(2)\to\mathbb{C}$, is given by 
\begin{equation}\label{SKvf}
S_\mathbf{K}[f](\omega):=\sum_{|\mathbf{k}|\le\mathbf{K}}\widehat{f}[\mathbf{k};2\mathbf{K}+1]\phi_{\mathbf{k}}(\omega),\hspace{1cm}{\rm for}\ \omega\in\Omega.
\end{equation}

In particular, if  $K\in\mathbb{N}$, the finite Fourier series of order $K$ of $f$, reads as 
\begin{equation}
S_K[f](\omega)=\sum_{\|\mathbf{k}\|_\infty\le K}\widehat{f}[\mathbf{k};2K+1]\phi_{\mathbf{k}}(\omega),\hspace{1cm}{\rm for}\ \omega\in\Omega.
\end{equation}

Next, we study structure of finite Fourier series on the right coset space $\mathbb{Z}^2\backslash SE(2)$. We utilize the theoretical approach discussed in Section \ref{SFS} to develop the numerical scheme of finite Fourier series on the right coset space $\mathbb{X}=\mathbb{L}\backslash SE(2)$, where $\mathbb{L}$ is the group of translational isometries of the orthogonal lattice $\mathbb{Z}^2$. In addition, $\mu$ is the $SE(2)$-invariant measure on $\mathbb{X}$ satisfying (\ref{Weil}).

Suppose that $\mathbf{L}:=(L_\x,L_\y,L_\rt)\in\mathbb{N}^3$. For a function $\psi:\mathbb{X}\to\mathbb{C}$ and $\mathbf{k}\in\mathbb{Z}^3$, let 
\begin{equation}\label{psikO}
\widehat{\psi}\{\mathbf{k};\mathbf{L}\}:=\frac{1}{|\mathbf{\Omega}_\mathbf{L}|}\sum_{\bfomega\in\Omega_\mathbf{L}}\psi(\mathbb{L}\bfomega)\overline{\varphi_\mathbf{k}(\mathbb{L}\bfomega)},
\end{equation}
where $\mathbf{\Omega}_\mathbf{L}$ is the fundamental grid given by (\ref{OmL}).
In detail, the finite Fourier coefficient of $\psi:\mathbb{X}\to\mathbb{C}$ at $\mathbf{k}$ using the sampling vector $\mathbf{L}:=(L_\mathrm{x},L_\mathrm{y}, L_\mathrm{r})$ is  
\begin{equation}\label{ssikLv}
\widehat{\psi}\{\mathbf{k};\mathbf{L}\}=\frac{1}{L_\mathrm{x}L_\mathrm{y}L_\mathrm{r}}\sum_{i=1}^{L_\mathrm{x}}\sum_{j=1}^{L_\mathrm{y}}\sum_{l=1}^{L_\mathrm{r}}\psi(\mathbb{L}\mathbf{g}(x_i,y_j,\theta_l))\overline{\phi_\mathbf{k}(x_i,y_j,\theta_l)},
\end{equation}
where $x_i:=-\frac{1}{2}+\frac{(i-1)}{L_\mathrm{x}}$, $y_j:=-\frac{1}{2}+\frac{(j-1)}{L_\mathrm{y}}$ and $\theta_l:=\frac{2\pi(l-1)}{L_\mathrm{r}}$ for every $\mathbf{1}\le (i,j,l)\le\mathbf{L}$.

Next proposition discusses absolute error of the finite Fourier coefficient $\widehat{\psi}\{\mathbf{k};\mathbf{L}\}$.
\begin{proposition}
{\it Suppose $\psi\in\mathcal{C}(\mathbb{X})$ such that $\psi_\Omega\in\mathcal{C}^1(SE(2))$ is supported in $\Omega^\circ$. Let $\mathbf{k}\in\mathbb{I}$ and $\mathbf{K}\in\mathbb{N}^3$. Then 
\[
\left|\widehat{\psi}\{\mathbf{k}\}-\widehat{\psi}\{\mathbf{k};2\mathbf{K}+1\}\right|\le\mathcal{O}\left(\frac{1}{\min(\mathbf{K})}\right),\hspace{1cm}\ {\rm if}\  |\mathbf{k}|\le\mathbf{K}.
\]
}\end{proposition}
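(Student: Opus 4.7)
The plan is to reduce the statement on $\mathbb{X}$ to the corresponding statement on the fundamental domain $\Omega$ that has already been proved in Theorem \ref{fkLmain}. The bridge is the function $\psi_\Omega\in\mathcal{C}^1(SE(2))$ supported in $\Omega^\circ$, together with two identifications: one on the exact side (Remark \ref{psikpsikO}), and one on the numerical side using the fact that every sample point $\bfomega\in\mathbf{\Omega}_\mathbf{L}$ lies in $\Omega$.

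First I would recall from Remark \ref{psikpsikO} that
\[
\widehat{\psi}\{\mathbf{k}\}=\widehat{\psi_\Omega}[\mathbf{k}].
\]
Next I would observe that the same identification holds for the finite coefficients. Since $\mathbf{\Omega}_\mathbf{L}\subset\Omega$ by construction (see (\ref{OmL})), for every sample point $\bfomega\in\mathbf{\Omega}_\mathbf{L}$ we have $E_\Omega(\bfomega)=1$, hence $\psi_\Omega(\bfomega)=\psi(\mathbb{L}\bfomega)$, and moreover $\varphi_\mathbf{k}(\mathbb{L}\bfomega)=\phi_\mathbf{k}(\bfomega)$. Comparing (\ref{psikO}) with (\ref{fkLvMain}) then yields
\[
\widehat{\psi}\{\mathbf{k};\mathbf{L}\}=\widehat{\psi_\Omega}[\mathbf{k};\mathbf{L}].
\]

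With both identifications in hand, the proof becomes routine: for $|\mathbf{k}|\le\mathbf{K}$ I would apply Theorem \ref{fkLmain} to the function $f:=\psi_\Omega$, which by hypothesis lies in $\mathcal{C}^1(SE(2))$ and is supported in $\Omega^\circ$, with sampling vector $2\mathbf{K}+1$. This gives
\[
\left|\widehat{\psi}\{\mathbf{k}\}-\widehat{\psi}\{\mathbf{k};2\mathbf{K}+1\}\right|
=\left|\widehat{\psi_\Omega}[\mathbf{k}]-\widehat{\psi_\Omega}[\mathbf{k};2\mathbf{K}+1]\right|
\le\frac{32\,\|\nabla\psi_\Omega\|_\infty}{\min(\mathbf{K})},
\]
and since $\|\nabla\psi_\Omega\|_\infty$ is a finite constant independent of $\mathbf{K}$, the right-hand side is $\mathcal{O}(1/\min(\mathbf{K}))$, as claimed.

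There is no real obstacle here beyond bookkeeping; the only delicate point to verify carefully is the numerical identification $\widehat{\psi}\{\mathbf{k};\mathbf{L}\}=\widehat{\psi_\Omega}[\mathbf{k};\mathbf{L}]$, which hinges on the fact that the sampling grid $\mathbf{\Omega}_\mathbf{L}$ lies inside $\Omega$ (not merely inside a fundamental set up to boundary), so that no periodization ambiguity appears in passing from $\psi$ on the coset space back to its $\Omega$-representative $\psi_\Omega$. Once this is stated explicitly, everything else is a direct invocation of Theorem \ref{fkLmain}.
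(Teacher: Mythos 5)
Your proposal is correct and follows essentially the same route as the paper: identify $\widehat{\psi}\{\mathbf{k}\}$ with $\widehat{\psi_\Omega}[\mathbf{k}]$ via Remark \ref{psikpsikO}, identify the finite coefficients likewise, and then apply Theorem \ref{fkLmain} to $\psi_\Omega$. In fact you make explicit the step $\widehat{\psi}\{\mathbf{k};\mathbf{L}\}=\widehat{\psi_\Omega}[\mathbf{k};\mathbf{L}]$ (justified by the sampling grid lying inside $\Omega$), which the paper uses implicitly.
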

\begin{proof}
Let $ |\mathbf{k}|\le\mathbf{K}$. 
Since $\psi_\Omega\in\mathcal{C}^1(SE(2))$ is supported in $\Omega^\circ$, using Remark \ref{psikpsikO} and Theorem \ref{fkLmain}, we obtain 
\[
\left|\widehat{\psi}\{\mathbf{k}\}-\widehat{\psi}\{\mathbf{k};2\mathbf{K}+1\}\right|=\left|\widehat{\psi_\Omega}[\mathbf{k}]-\widehat{\psi_\Omega}[\mathbf{k};2\mathbf{K}+1]\right|\le\frac{32\|\nabla\psi_\Omega\|_\infty}{\min(\mathbf{K})}.\qedhere
\]
\end{proof}
Let $\mathbf{K}\in\mathbb{N}^3$ and $\psi:\mathbb{X}\to\mathbb{C}$ is a function. The finite Fourier series of $\psi$ associated to $\mathbf{K}$, denoted by $S_{\mathbf{K}}[\psi]:\mathbb{X}\to\mathbb{C}$, is the function given by 
\begin{equation}\label{SKvpsi}
S_\mathbf{K}[\psi](\mathbb{L}\mathbf{g}):=\sum_{|\mathbf{k}|\le\mathbf{K}}\widehat{\psi}\{\mathbf{k};2\mathbf{K}+1\}\varphi_{\mathbf{k}}(\mathbb{L}\mathbf{g}),\hspace{1cm}{\rm for}\ \mathbf{g}\in SE(2).
\end{equation}
In particular, if $K\in\mathbb{N}$, the finite Fourier series $S_K[\psi]$ reads as 
\begin{equation}
S_K[\psi](\mathbb{L}\mathbf{g}):=\sum_{\|\mathbf{k}\|_\infty\le K}\widehat{\psi}\{\mathbf{k};2K+1\}\varphi_{\mathbf{k}}(\mathbb{L}\mathbf{g}),\hspace{1cm}{\rm for}\ \mathbf{g}\in SE(2).
\end{equation}

\section{\bf Numerics of Finite Fourier Series}\label{NumFFS}
This section discusses a unified strategy in terms of discrete Fourier transforms for numerical implementation of finite Fourier coefficients given by (\ref{fkLvMain}) using fast Fourier
algorithms in MATLAB. It is worth mentioning that by applying the approach discussed in Remark \ref{psikpsikO}, functions on the right coset space $\mathbb{X}$ can be realized by functions on $SE(2)$ which are supported in the fundamental domain $\Omega$. Therefore, the matrix forms and numerical experiments will be  discussed for functions supported in $\Omega$. 

\subsection{Matrix forms}\label{MatSKf}
This part discusses matrix form of the finite transform and the inverse finite transform. Let $f:SE(2)\to\mathbb{C}$ be a function supported in $\Omega$.  Assume $\mathbf{k}:=(k_1,k_2,k_3)^T\in\mathbb{I}$ and $\mathbf{L}:=(L_\mathrm{x},L_\mathrm{y},L_\mathrm{r})^T\in\mathbb{N}^3$.
Then 
\begin{align}
\widehat{f}[\mathbf{k};\mathbf{L}]
&=\frac{1}{L_\mathrm{x}L_\mathrm{y}L_\mathrm{r}}\sum_{i=1}^{L_\mathrm{x}}\sum_{j=1}^{L_\mathrm{y}}\sum_{l=1}^{L_\mathrm{r}}f(x_i,y_j,\theta_l)e^{-2\pi\ii(k_1x_i+k_2y_j)}e^{-\ii k_3\theta_l}\nonumber
\\&=\frac{(-1)^{k_1+k_2}}{L_\mathrm{x}L_\mathrm{y}L_\mathrm{r}}\sum_{i=1}^{L_\mathrm{x}}\sum_{j=1}^{L_\mathrm{y}}\sum_{l=1}^{L_\mathrm{r}}f(x_i,y_j,\theta_l)e^{-2\pi\ii k_1(i-1)/{L_\mathrm{x}}}e^{-2\pi\ii k_2(j-1))/{L_\mathrm{y}}}e^{-2\pi\ii k_3(l-1)/{L_\mathrm{r}}}\nonumber
\\&=\frac{(-1)^{k_1+k_2}}{L_\mathrm{x}L_\mathrm{y}L_\mathrm{r}}\sum_{i=1}^{L_\mathrm{x}}\sum_{j=1}^{L_\mathrm{y}}\sum_{l=1}^{L_\mathrm{r}}f(x_i,y_j,\theta_l)e^{-2\pi\ii\tau_{L_\mathrm{x}}(k_1)(i-1)/L_\mathrm{x}}e^{-2\pi\ii \tau_{L_\mathrm{y}}(k_2)(j-1))/L_\mathrm{y}}e^{-2\pi\ii\tau_{L_\mathrm{r}}(k_3)(l-1)/L_\mathrm{r}}\nonumber
\\&=\frac{(-1)^{k_1+k_2}}{L_\mathrm{x}L_\mathrm{y}L_\mathrm{r}}\widehat{\mathbf{F}}(\tau_{L_\mathrm{x}}(k_1)+1,\tau_{L_\mathrm{y}}(k_2)+1,\tau_{L_\mathrm{r}}(k_3)+1),\label{FkLv}
\end{align}
where $\mathbf{F}\in\mathbb{C}^{\mathbf{L}}$ is given by $\mathbf{F}(i,j,l):=f(x_i,y_j,\theta_l)$, 
for $1\le(i,j,l)\le\mathbf{L}$, and $\widehat{\mathbf{F}}$ is the DFT of $\mathbf{F}$ given by (\ref{3DFT}). In particular, we have 
\begin{equation}\label{fkLvM}
\widehat{f}[\mathbf{k};L]=\frac{(-1)^{k_1+k_2}}{L^3}\widehat{\mathbf{F}}(\tau_L(k_1)+1,\tau_L(k_2)+1,\tau_L(k_3)+1).
\end{equation}
The matrix form (\ref{FkLv}) has practical advantage in numerical experiments. The matrix form (\ref{FkLv}) reformulated multivariate summations in the right hand-side of (\ref{fkLvMain}) into discrete Fourier transform (DFT) of the sampled values of the function $f$ which can be then performed using fast Fourier algorithms. 

Next, we introduce Algorithm \ref{fkLAlg} using regular sampling on the fundamental domain $\Omega$ to approximate the Fourier coefficients of the form $\widehat{f}[\mathbf{k}]$ for function $f:SE(2)\to\mathbb{C}$ supported in the fundamental domain $\Omega$ with respect to a given absolute error.
\begin{algorithm}[H]
\caption{Finding $\epsilon$-approximation of $\widehat{f}[\mathbf{k}]$ using regular square sampling of $\Omega$ and FFT} 
\begin{algorithmic}[1]
\State{\bf input data} Given function $f:SE(2)\to\mathbb{C}$ supported in $\Omega$,  error $\epsilon>0$ and $\mathbf{k}:=(k_1,k_2,k_3)^T\in\mathbb{I}$
\State {\bf output result} $\widehat{f}[\mathbf{k};L]$ with the absolute error $\le\epsilon$\\ 
Put $\beta_\epsilon:=\max\{\epsilon^{-1}32\|\nabla f\|_\infty,\|\mathbf{k}\|_\infty\}$,  find $K\in\mathbb{N}$ such that $K\ge\beta_\epsilon$ and let  $L:=2K+1$\\
Generate the fundamental grid $(x_i,x_j,\theta_l)$ with $x_i:=\frac{-1}{2}+\frac{(i-1)}{L}$,  $\theta_l:=\frac{2\pi(l-1)}{L}$, and $1\le i,j,l\le L$\\
Generate sampled values $\mathbf{F}:=(f(x_i,x_j,\theta_l))_{1\le i,j,l\le L}$ of the 	function $f:SE(2)\to\mathbb{C}$.
\State Compute $\widehat{f}[\mathbf{k};L]$ according to (\ref{fkLvM}) using FFT.
\end{algorithmic} 
\label{fkLAlg}
\end{algorithm}
Next, we present matrix form of the finite Fourier series defined by (\ref{SKvf}) for functions supported in the fundamental domain $\Omega$.

Assume that $\mathbf{K}:=(K_\mathrm{x},K_\mathrm{y},K_\rt)\in\mathbb{N}^3$. Suppose $\mathbf{L}:=2\mathbf{K}+1$ and $\mathbf{L}=(L_\x,L_\y,L_\rt)$. Let $f:SE(2)\to\mathbb{C}$ be a function supported in $\Omega$ and $(x,y,\theta)\in \Omega$. Then  
\begin{align*}
&S_\mathbf{K}[f](x,y,\theta)
=\sum_{|\mathbf{k}|\le\mathbf{K}}\widehat{f}[\mathbf{k};\mathbf{L}]\phi_{\mathbf{k}}(x,y,\theta)
=\sum_{|k_1|\le K_\x}\sum_{|k_2|\le K_\y}\sum_{|k_3|\le K_\rt}\widehat{f}[\mathbf{k};\mathbf{L}]\phi_{\mathbf{k}}(x,y,\theta)
\\&=\frac{1}{L_\x L_\y L_\rt}\sum_{k_1=-K_\mathrm{x}}^{K_\mathrm{x}}\sum_{k_2=-K_\mathrm{y}}^{K_\mathrm{y}}\sum_{k_3=-K_\mathrm{r}}^{K_\mathrm{r}}(-1)^{k_1+k_2}\widehat{\mathbf{F}}(\tau_{L_\x}(k_1)+1,\tau_{L_\y}(k_2)+1,\tau_{L_\rt}(k_3)+1)e^{2\pi\ii (k_1x+k_2y)}e^{\ii k_3\theta}.
\end{align*}
In some practical experiments, visualization of the approximation function $S_\mathbf{K}[f]$ versus the function $f$ on a grid finer than the sampling grid, used for computing the coefficients $\widehat{f}[\mathbf{k};2\mathbf{K}+1]$, is required. Next, we present matrix form for evaluation of $S_\mathbf{K}[f]$ on configuration $\mathbf{N}$-grid with $\mathbf{N}\in\mathbb{N}^3$ and $\mathbf{N}\ge\mathbf{L}$. 

Let $\mathbf{N}:=(N_\x,N_\y,N_\rt)\in\mathbb{N}^3$ with $\mathbf{N}\ge\mathbf{L}$. 
Suppose that $x_n':=\frac{-1}{2}+\frac{(n-1)}{N_\x}$, $y_m':=\frac{-1}{2}+\frac{(m-1)}{N_\y}$, and $\theta_l'=\frac{2\pi(l-1)}{N_\rt}$ for every $1\le n\le N_\x$, $1\le m\le N_\y$, and $1\le l\le N_\rt$. Then using Proposition \ref{LtoN1D}, we achieve 
\begin{equation}\label{MatSKfonG}
S_{\mathbf{K}}[f](x_n',y_m',\theta_l')=\frac{N_\x N_\y N_\rt}{L_\x L_\y L_\rt}\mathrm{iDFT}({\mathbf{Q}}_f)(n,m,l),
\end{equation}
where $\mathrm{iDFT}(\mathbf{Q}_f)$ is given by (\ref{3iDFT}) and the 3D array $\mathbf{Q}_f\in\mathbb{C}^{\mathbf{N}}$ is given by 
\begin{equation}\label{Qf}
\mathbf{Q}_{f}(n_\x,n_\y,n_\rt):=\left\{\begin{array}{lllllll}
\widehat{\mathbf{F}}(n_\x,n_\y,n_\rt) & {\rm if}\ \ \ (n_\x,n_\y,n_\rt)\in\mathbb{I}_\x\times \mathbb{I}_\y\times\mathbb{I}_\rt\\
\widehat{\mathbf{F}}(n_\x,n_\y,L_\rt+n_\rt-N_\rt) & {\rm if}\ \ \ (n_\x,n_\y,n_\rt)\in\mathbb{I}_\x\times \mathbb{I}_\y\times\mathbb{J}_\rt\\
\widehat{\mathbf{F}}(n_\x,L_\y+n_\y-N_\y,L_\rt+n_\rt-N_\rt) & {\rm if}\ \ \ (n_\x,n_\y,n_\rt)\in\mathbb{I}_\x\times \mathbb{J}_\y\times\mathbb{J}_\rt\\
\widehat{\mathbf{F}}(n_\x,L_\y+n_\y-N_\y,n_\rt) & {\rm if}\ \ \ (n_\x,n_\y,n_\rt)\in\mathbb{I}_\x\times \mathbb{J}_\y\times\mathbb{I}_\rt\\
\widehat{\mathbf{F}}(L_\x+n_\x-N_\x,L_\y+n_\y-N_\y,L_\rt+n_\rt-N_\rt) & {\rm if}\ \ \ (n_\x,n_\y,n_\rt)\in\mathbb{J}_\x\times \mathbb{J}_\y\times\mathbb{J}_\rt\\
\widehat{\mathbf{F}}(L_\x+n_\x-N_\x,n_\y,L_\rt+n_\rt-N_\rt) & {\rm if}\ \ \ (n_\x,n_\y,n_\rt)\in\mathbb{J}_\x\times \mathbb{I}_\y\times\mathbb{J}_\rt\\
\widehat{\mathbf{F}}(L_\x+n_\x-N_\x,L_\y+n_\y-N_\y,n_\rt) & {\rm if}\ \ \ (n_\x,n_\y,n_\rt)\in\mathbb{J}_\x\times \mathbb{J}_\y\times\mathbb{I}_\rt\\
\widehat{\mathbf{F}}(L_\x+n_\x-N_\x,n_\y,n_\rt) & {\rm if}\ \ \ (n_\x,n_\y,n_\rt)\in\mathbb{J}_\x\times \mathbb{I}_\y\times\mathbb{I}_\rt\\
0 & {\rm otherwise}
\end{array}
\right.,
\end{equation}
with $\mathbb{I}_\x:=1:K_\x+1$, $\mathbb{I}_\y:=1:K_\y+1$, $\mathbb{I}_\rt:=1:K_\rt+1$,
$\mathbb{J}_\x:=N_\x-K_\x+1:N_\x$, $\mathbb{J}_\y:=N_\y-K_\y+1:N_\y$ and $\mathbb{J}_\rt:=N_\rt-K_\rt+1:N_\rt.$ 
In particular, if $\mathbf{N}=\mathbf{L}$ then $S_{\mathbf{K}}[f]=\mathrm{iDFT}(\widehat{\mathbf{F}}).$

The matrix form (\ref{MatSKfonG}) is also attractive from computational perspectives.  The matrix form (\ref{MatSKfonG}) reads the multivariate summations in the right hand-side of  (\ref{SKvf}) into inverse discrete Fourier transform (iDFT) of zero-padded version of the DFT of sampled values of $f$ which can be performed using fast Fourier algorithms. 

Then we discuss Algorithm \ref{SKLAlg} using regular sampling on the fundamental domain $\Omega$ to approximate the Finite Fourier series of the form $S_{\mathbf{K}}(f)$ for function $f:SE(2)\to\mathbb{C}$ supported in the fundamental domain $\Omega$ with respect to a given visualization grid associated to the size vector $\mathbf{N}\in\mathbb{N}^3$.
\begin{algorithm}[H]
\caption{Computing approximation of the $S_\mathbf{K}(f)$ on $\mathbf{N}$-grid using FFT} 
\begin{algorithmic}[1]
\State{\bf input data} 

Given $f\in L^2(\Omega)$,  approximation order $\mathbf{K}\in\mathbb{N}^3$, 
and configuration size $\mathbf{N}\in\mathbb{N}^3$

\State {\bf output result} 

Values of the finite Fourier series $S_\mathbf{K}[f]$ on the configuration $\mathbf{N}$-grid\\ 
Put $\mathbf{L}:=(L_\x,L_\y,L_\rt)^T=2\mathbf{K}+1$\\
Generate the fundamental sampling $\mathbf{L}$-grid $\{(x_i,y_j,\theta_l):\mathbf{1}\le(i,j,l)\le\mathbf{L}\}$ according to (\ref{OmL}).\\
Generate the 3D array $\mathbf{F}\in\mathbb{C}^\mathbf{L}$ using $\mathbf{F}(i,j,l):=f(x_i,y_j,\theta_l)$ every $\mathbf{1}\le(i,j,l)\le\mathbf{L}$\\
Compute the 3D array $\widehat{\mathbf{F}}\in\mathbb{C}^{\mathbf{L}}$ according to (\ref{3DFT}) using FFT\\
Generate the 3D array $\mathbf{Q}_f\in\mathbb{C}^{\mathbf{N}}$ according to (\ref{Qf}).
\State Compute $S_{\mathbf{K}}[f]\in\mathbb{C}^\mathbf{N}$ according to (\ref{MatSKfonG}) using FFT.
\end{algorithmic} 
\label{SKLAlg}
\end{algorithm}

\subsection{Numerical experiments} 
This part is dedicated to illustrate  some numerical experiments related to finite Fourier series on the fundamental domain $\Omega$, using the discussed matrix approach in \ref{MatSKf}. We here implement some experiments in MATLAB for different class of functions on $SE(2)$ which are supported (absolutely/approximately) on the fundamental domain $\Omega$

To start with, we consider
functions which are continuously differentiable on $SE(2)$ and supported in the fundamental domain $\Omega$. In addition, functions approximately supported in fundamental domain will be investigated as well. In this direction, to also verify stability of the finite Fourier coefficient (FFC) numerical scheme for functions which are approximately supported in the fundamental domain $\Omega$, we will run error test experiments to verify the order of the absolute error  in Corollary \ref{fkOK}. 

A large class of continuously differentiable functions on $SE(2)$, which are absolutely supported in the fundamental domain $\Omega$, can be constructed via polar functions supported in circles. The canonical suggestion for such scaling of polar functions is  scaling of Bessel functions with respect to their zeros. 

\newpage
For $m,\ell\in\mathbb{Z}$ and $n\in\mathbb{N}$,  let the generalized polar harmonics $\Psi_{m,n}^\ell:SE(2)\to\mathbb{C}$ be given by
\begin{equation}
\Psi_{m,n}^\ell(r,\phi,\theta):=\left\{\begin{array}{ll}
\vspace{0.1cm}
J_m(2z_{m,n}r)e^{\ii m\phi}e^{\ii\ell\theta} & {\rm if}\ r\le \frac{1}{2}\\
0 & {\rm if}\ r>\frac{1}{2}
\end{array}
\right.,
\end{equation}
where $J_m$ is the $m$th-order Bessel function of the
first kind and $z_{m,n}$ is the $n$th positive zeros of $J_m(x)$.
 
Then $\Psi_{m,n}^\ell$ is a smooth function which is supported in $\mathbb{B}_{1/2}\times[0,2\pi)$. So it is also supported in the fundamental domain $\Omega$ and vanishing on the boundary of $\Omega$.

\begin{example}
Let $m=0$, $n=3$, and $\ell=0$. Suppose that $\mathbf{K}:=(25,26,40)$.

\begin{figure}[H]
\centering
\includegraphics[keepaspectratio=true,width=\textwidth, height=0.3\textheight]{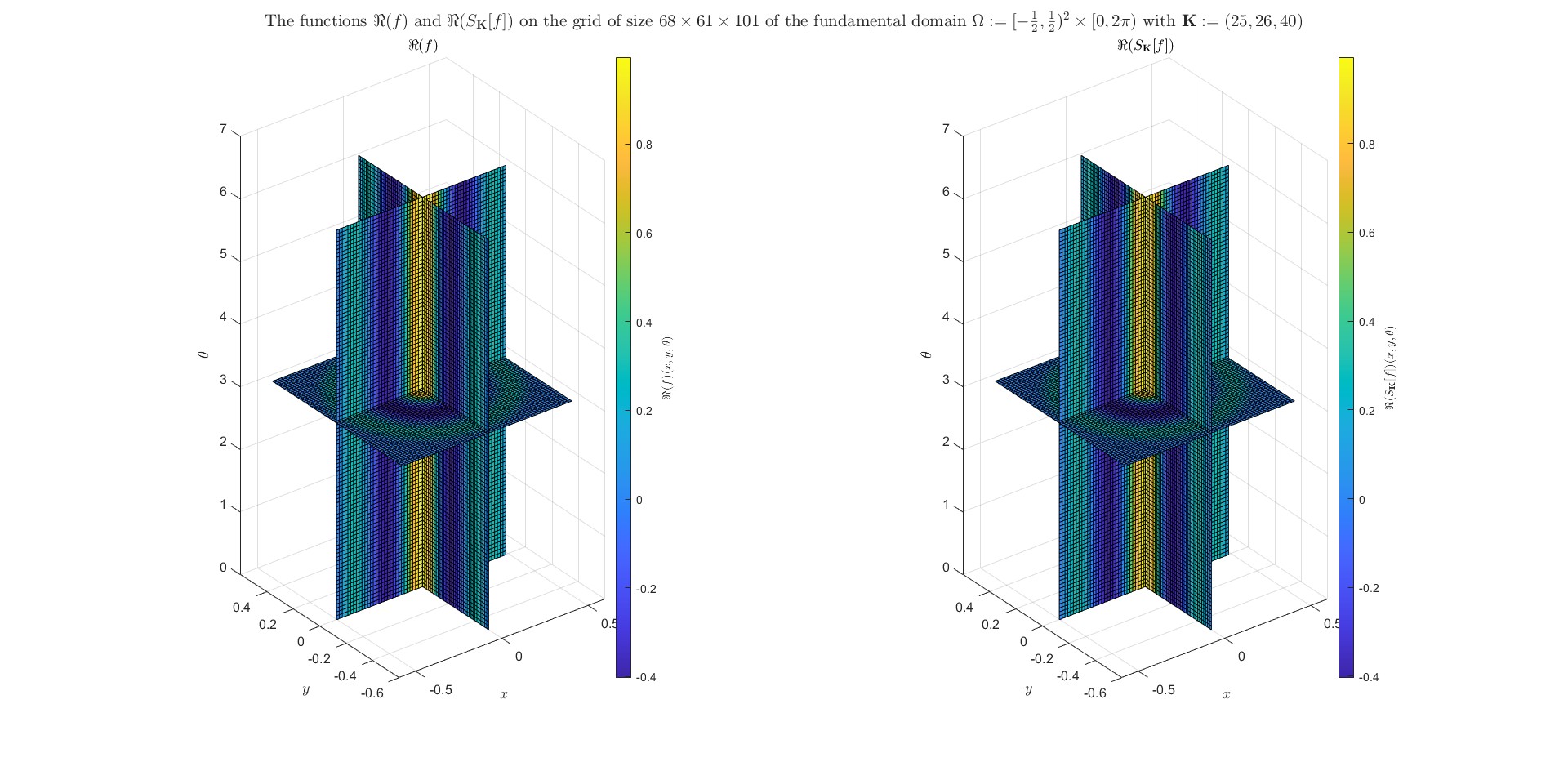}
\caption{The slice plot of $\Re(f)$ and $\Re(S_{\mathbf{K}}[f])$ on the uniform grid of size $68\times 61\times 101$ of the fundamental domain $\Omega$ at slices $[0,0,\pi]$,  where $f:=\Psi_{m,n}^\ell$.}
\label{fig:SlicP030Re}
\end{figure}
\begin{figure}[H]
\centering
\includegraphics[keepaspectratio=true,width=0.9\textwidth, height=0.3\textheight]{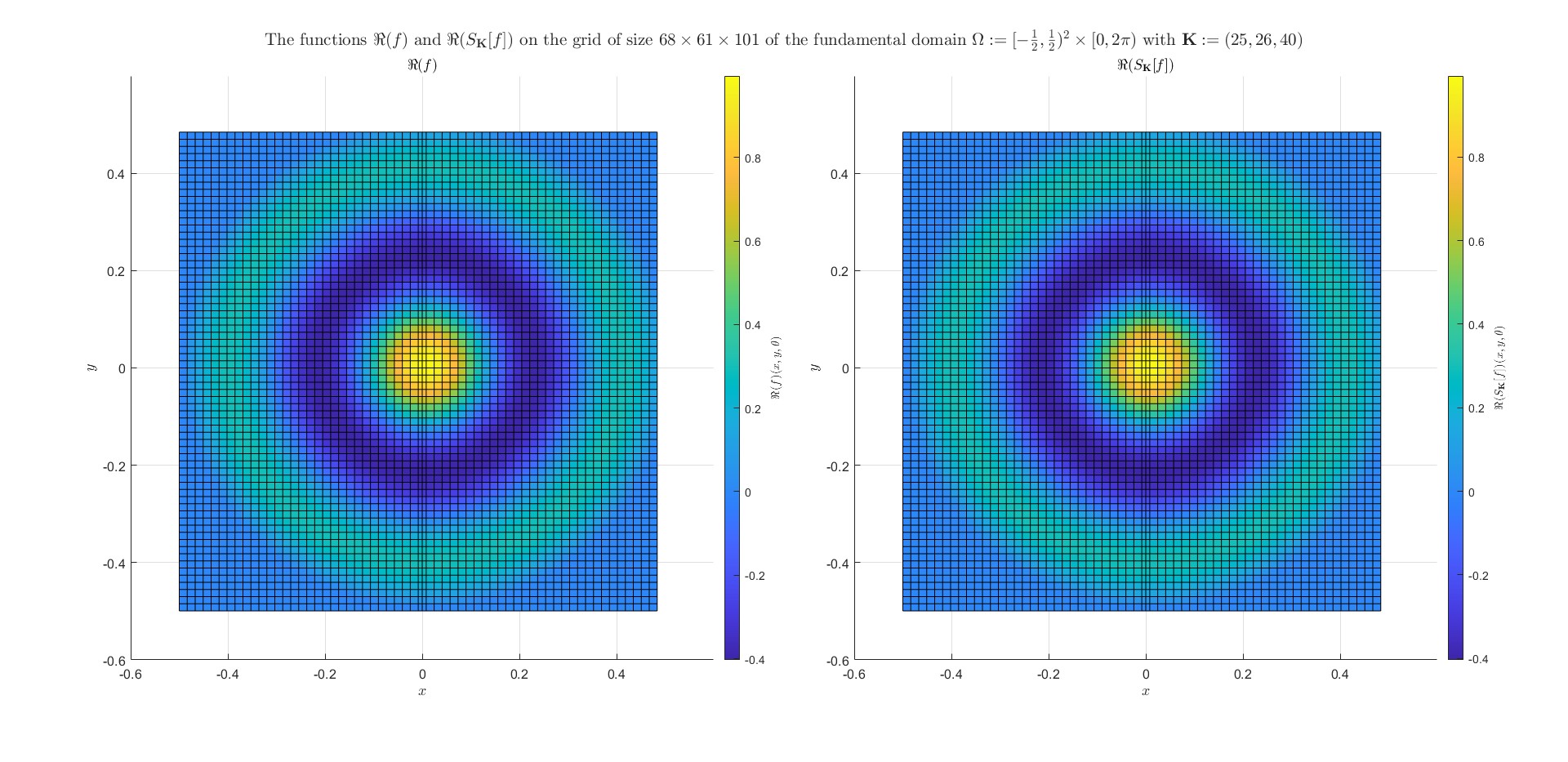}
\caption{The contour plot of $\Re(f)$ and $\Re(S_{\mathbf{K}}[f])$ on the uniform grid of size $68\times 61\times 101$ of the fundamental domain $\Omega$ at $\theta=\pi$, where $f:=\Psi_{m,n}^\ell$.}
\label{fig:ContP030Re}
\end{figure}
\end{example}

\newpage

Then we consider functions which are approximately supported in the fundamental domain $\Omega$.  A class of smooth functions which are approximately zero on the boundary and outside of the fundamental domain $\Omega$ can be considered as deformed 3D Gaussian and multidimentional Gaussian on $SE(2)$. 
    
{\bf 3D Deformed Gaussians.} Suppose that $\mathbf{H}\in M_2(\mathbb{R})$ is a positive-definite matrix. Assume that $s>0$ and $\nu\in[0,2\pi)$ are given.  Let $f_{\mathbf{H}}^{\nu,s}:SE(2)\to\mathbb{C}$ be the function given by 
\begin{equation}\label{fHsnu}
f_{\mathbf{H}}^{\nu,s}(\mathbf{x},\mathbf{R}_\theta):=e^{-\mathbf{x}^T\mathbf{H}\mathbf{x}}e^{-\frac{(\theta-\nu)^2}{s}},
\end{equation}
for every $(\mathbf{x},\mathbf{R}_\theta)\in SE(2)$.

\begin{example} 
Let $\mathbf{H}:=\mathrm{diag}(0.04,0.1)^{-1}$, $s:=0.4$, and $\nu:=\pi$.
Then $f_{s,\mathbf{H}}$ given by (\ref{fHsnu}) is approximately supported in the fundamental domain $\Omega$.  

To begin with, we consider experiment for analysis of structural behavior of the absolute error of approximating Fourier coefficients given by (\ref{fk}) using the finite Fourier coefficients (FFCs) of the form (\ref{fkLvMain}). In details, we visualize the absolute error values given by 
\[
\left|\widehat{f_{\mathbf{H}}^{\nu,s}}[\mathbf{k}]-\widehat{f_{\mathbf{H}}^{\nu,s}}[\mathbf{k};2K+1]\right|,
\]
for a given  $\mathbf{k}\in\mathbb{Z}^3$. 

Assume that $\mathbf{k}:=(1,-2,3)^T\in\mathbb{Z}^3$ is fixed. We here implemented an experiment which computed the absolute error $|\widehat{f_{\mathbf{H}}^{\nu,s}}[\mathbf{k}]-\widehat{f_{\mathbf{H}}^{\nu,s}}[\mathbf{k};2K+1]|$ for all integer values $K=1:30$. 
\vspace{1cm}
\begin{figure}[H]
\centering
\includegraphics[keepaspectratio=true,width=18cm, height=15cm]{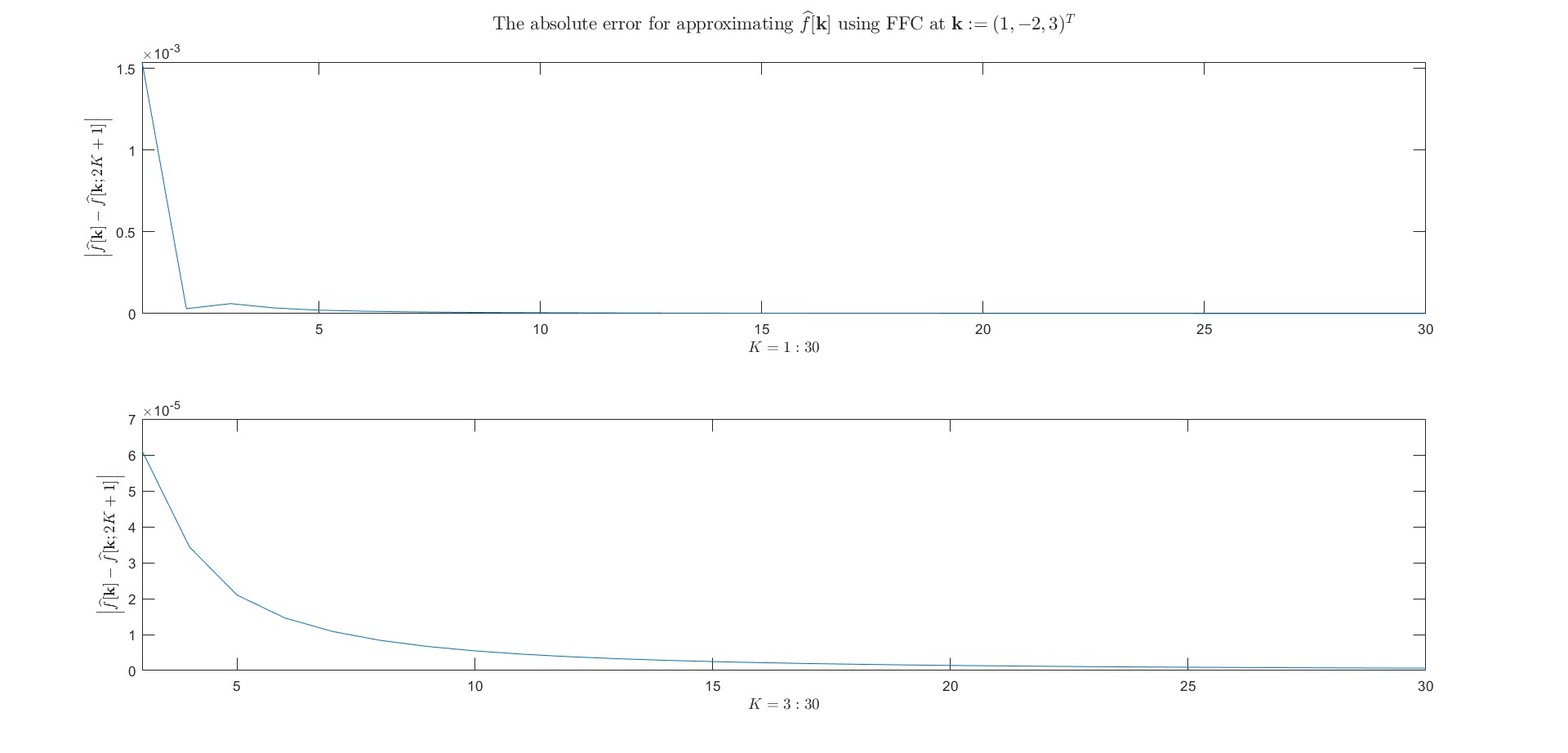}
\caption{The absolute error approximation for the Fourier coefficient $\widehat{f_{\mathbf{H}}^{\nu,s}}[\mathbf{k}]$ with $\mathbf{k}:=(1,-2,3)^T\in\mathbb{Z}^3$ using finite Fourier coefficients (FFCs) of the form $\widehat{f_{\mathbf{H}}^{\nu,s}}[\mathbf{k};2K+1]$ given by (\ref{fkLvM}).  The (upper) plot displays the generic behavior of the absolute error values $|\widehat{f_{\mathbf{H}}^{\nu,s}}[\mathbf{k}]-\widehat{f_{\mathbf{H}}^{\nu,s}}[\mathbf{k};2K+1]|$ for all integer values $1\le K\le 30$. As Corollary   \ref{fkOK} guarantees, the absolute error is $\mathcal{O}(1/K)$ if $\|\mathbf{k}\|_\infty\le K$. Since $\|\mathbf{k}\|_\infty=3$, to have better visualization of the structural behavior of the values $|\widehat{f_{\mathbf{H}}^{\nu,s}}[\mathbf{k}]-\widehat{f_{\mathbf{H}}^{\nu,s}}[\mathbf{k};2K+1]|$, we then plotted the absolute error values with respect to $K=3:30$. }
\label{fig:ErrFFCfsH}
\end{figure}

\newpage

\begin{figure}[H]
\centering
\includegraphics[keepaspectratio=true,width=18cm, height=15cm]{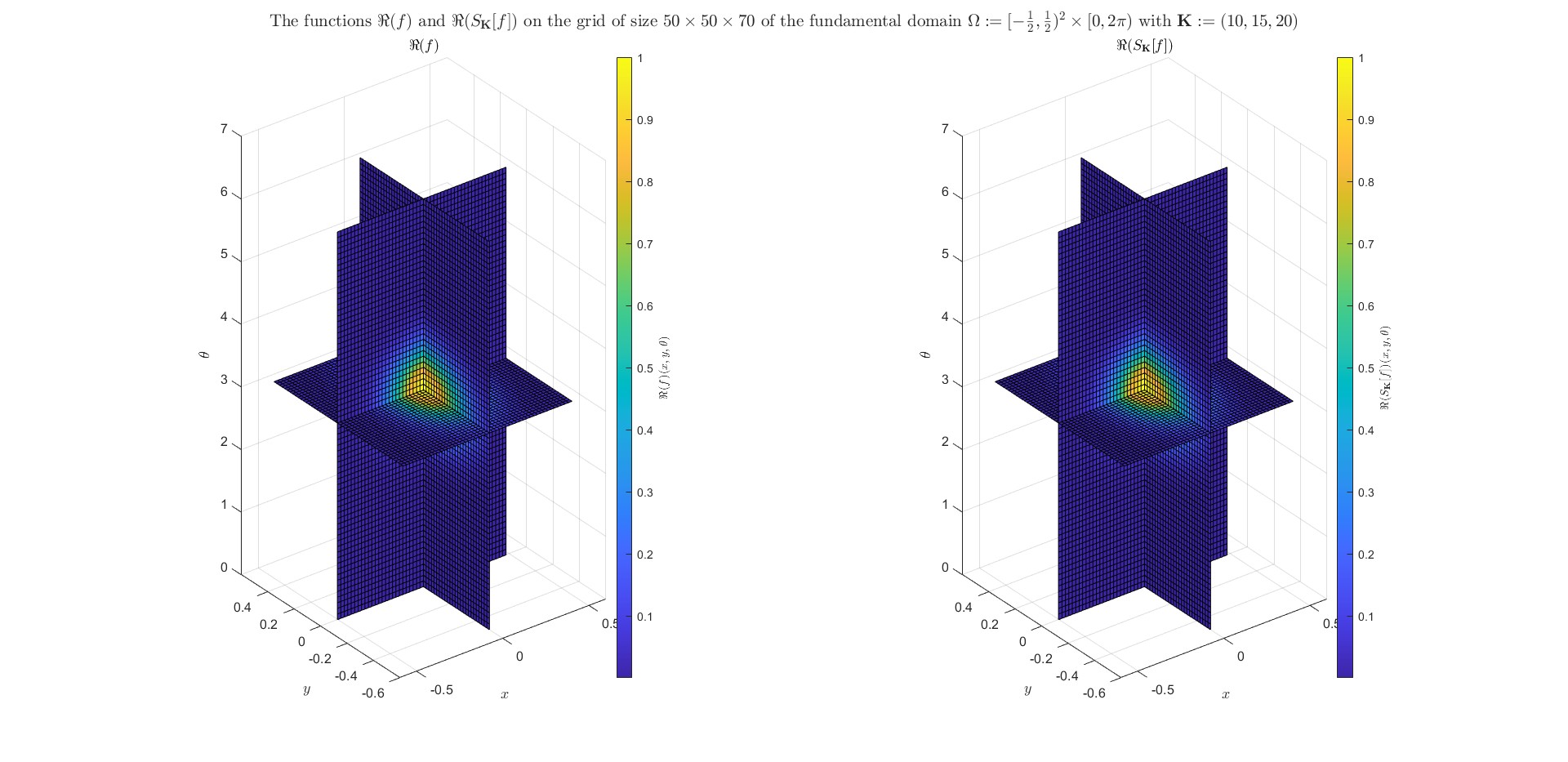}
\caption{The slice plot of $f_{\mathbf{H}}^{\nu,s}$ and $\Re(S_{\mathbf{K}}[f_{\mathbf{H}}^{\nu,s}])$ on the uniform grid of size $50\times 50\times 70$ of the fundamental domain $\Omega$ at slices $[0,0,\pi]$, where $\mathbf{H}:=\mathrm{diag}(0.04,0.1)^{-1}$, $s:=0.4$, $\nu:=\pi$, and $\mathbf{K}:=(10,15,20)$.}
\label{fig:Slic3DG}
\end{figure}

\begin{figure}[H]
\centering
\includegraphics[keepaspectratio=true,width=18cm, height=15cm]{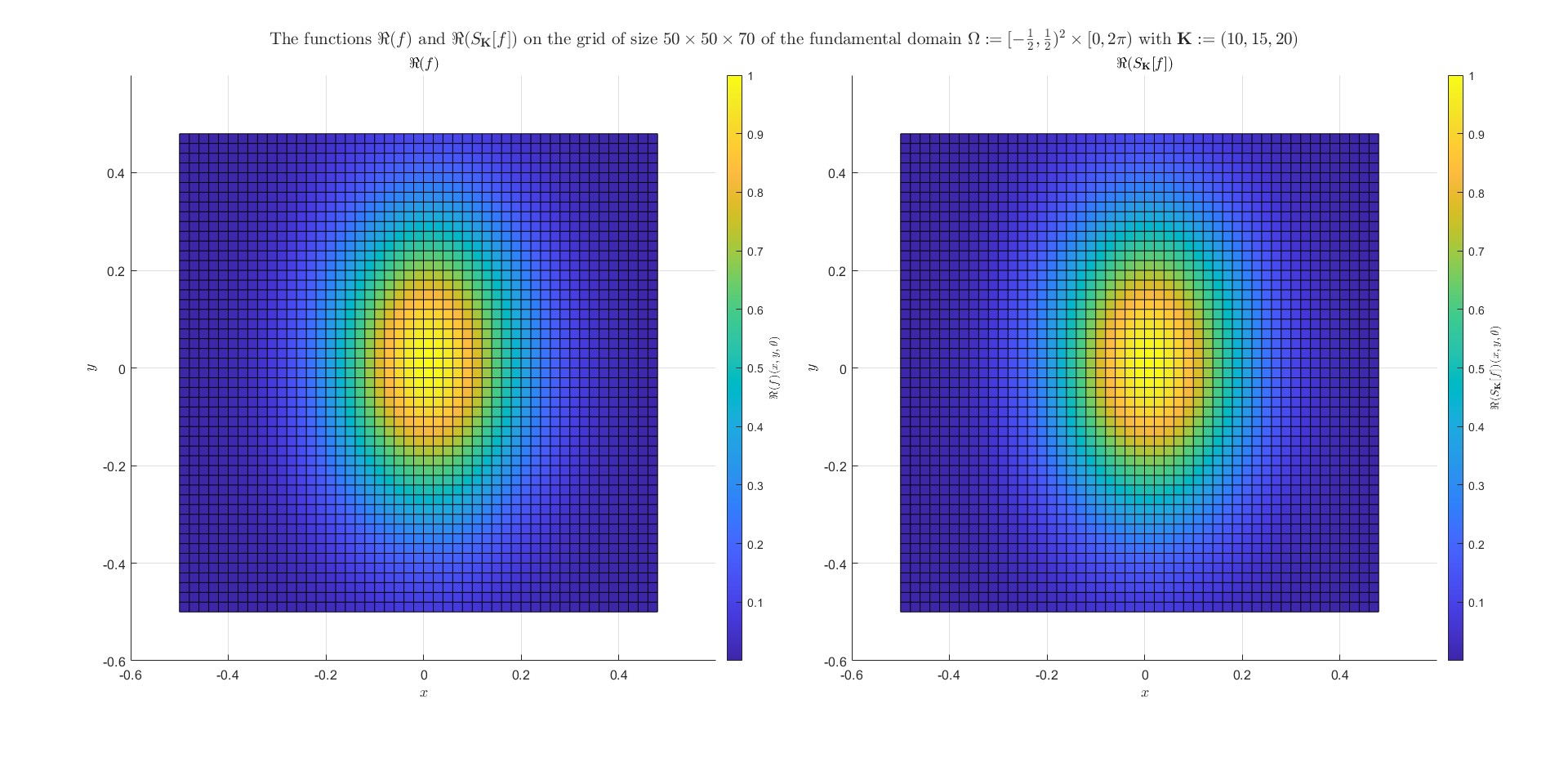}
\caption{The contour plot of $f_{\mathbf{H}}^{\nu,s}$ and $\Re(S_{\mathbf{K}}[f_{\mathbf{H}}^{\nu,s}])$ on the uniform grid of size $50\times 50\times 70$ of the fundamental domain $\Omega$ at $\theta=\pi$, where $\mathbf{H}:=\mathrm{diag}(0.04,0.1)^{-1}$, $s:=0.4$, $\nu:=\pi$, and $\mathbf{K}:=(10,15,20)$.}
\label{fig:Cont3DG}
\end{figure}
\end{example}

\newpage
{\bf Multidimentional Gaussian on $SE(2)$.} Suppose that $\mathbf{\Sigma}\in M_3(\mathbb{R})$ is positive-definite and $\bfbeta\in SE(2)$. Then  $f_{\bfbeta,\bf{\Sigma}}:SE(2)\to\mathbb{R}$ given by 
\begin{equation}\label{fbS}
f_{\bfbeta,\bf{\Sigma}}(\mathbf{g})=\exp\left(\frac{-{\log(\bfbeta^{-1}\circ\mathbf{g})^\lor}^T\mathbf{\Sigma}^{-1}\log(\bfbeta^{-1}\circ\mathbf{g})^\lor}{2}\right),
\end{equation}
is called the multidimensional Gaussian with mean $\bfbeta$ and the covariance $\mathbf{\Sigma}$ on $SE(2)$, where $\log:SE(2)\to\mathfrak{se}(2)$ is the matrix logarithm map, if elements of $SE(2)$ are considered as matrices of the form (\ref{gxyt}) and $^\lor:\mathfrak{se}(2)\to\mathbb{R}^3$ is the identification map.

\begin{example}
Let $\bfbeta:=(\mathbf{0},\mathbf{R}_\pi)$ and $\mathbf{\Sigma}:=\sigma^2\mathbf{I}_3$ with $\sigma^2:=0.05$. Then $f_{\bfbeta,\bf{\Sigma}}$ given by (\ref{fbS}) is approximately supported in the fundamental domain $\Omega$.  

To start with, we consider experiment for analysis of structural behavior of the absolute error of approximating Fourier coefficients given by (\ref{fk}) using the finite Fourier coefficients (FFCs) of the form (\ref{fkLvMain}). In details, we visualize the absolute error values given by 
\[
\left|\widehat{f_{\bfbeta,\bf{\Sigma}}}[\mathbf{k}]-\widehat{f_{\bfbeta,\bf{\Sigma}}}[\mathbf{k};2K+1]\right|,
\]
for a given  $\mathbf{k}\in\mathbb{Z}^3$. 

Assume that $\mathbf{k}:=(-1,2,-4)^T\in\mathbb{Z}^3$ is fixed. We here implemented an experiment which computed the absolute error $\left|\widehat{f_{\bfbeta,\bf{\Sigma}}}[\mathbf{k}]-\widehat{f_{\bfbeta,\bf{\Sigma}}}[\mathbf{k};2K+1]\right|$ for all integer values $K=1:30$. 
\begin{figure}[H]
\centering
\includegraphics[keepaspectratio=true,width=18cm, height=15cm]{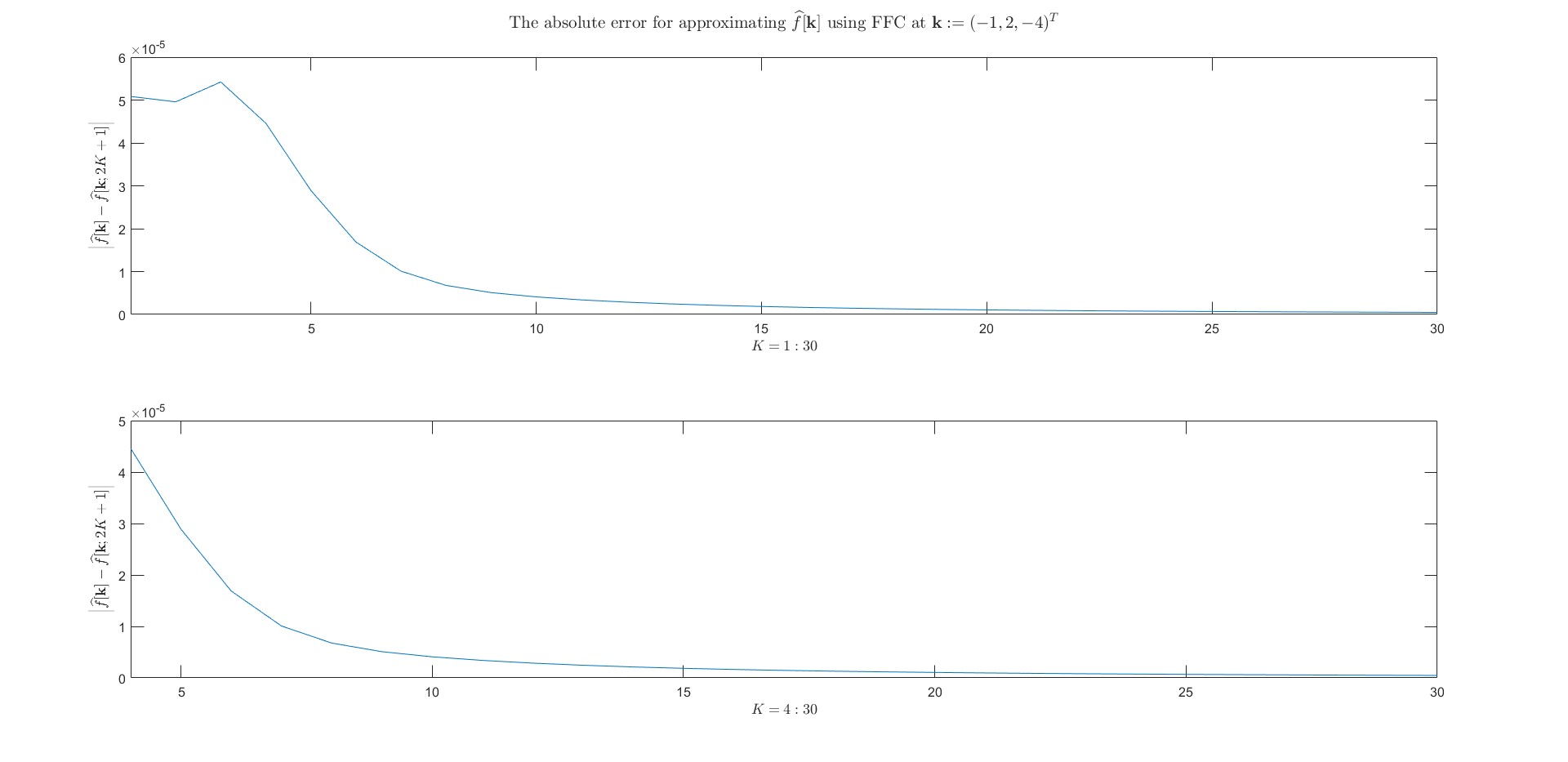}
\caption{The absolute error approximation for the Fourier coefficient $\widehat{f_{\bfbeta,\bf{\Sigma}}}[\mathbf{k}]$ with $\mathbf{k}:=(-1,2,-4)^T\in\mathbb{Z}^3$ using finite Fourier coefficients (FFCs) of the form $\widehat{f_{\bfbeta,\bf{\Sigma}}}[\mathbf{k};2K+1]$ given by (\ref{fkLvM}).  The (upper) plot displays the generic behavior of the absolute error values $|\widehat{f_{\bfbeta,\bf{\Sigma}}}[\mathbf{k}]-\widehat{f_{\bfbeta,\bf{\Sigma}}}[\mathbf{k};2K+1]|$ for all integer values $1\le K\le 30$. As Corollary \ref{fkOK} guarantees, the absolute error is $\mathcal{O}(1/K)$ if $\|\mathbf{k}\|_\infty\le K$. Since $\|\mathbf{k}\|_\infty=4$, to have better visualization of the structural behavior of the values $|\widehat{f_{\bfbeta,\bf{\Sigma}}}[\mathbf{k}]-\widehat{f_{\bfbeta,\bf{\Sigma}}}[\mathbf{k};2K+1]|$, we then plotted the absolute error values with respect to $K=4:30$. }
\label{fig:ErrFFCfbS}
\end{figure}

\newpage
\begin{figure}[H]
\centering
\includegraphics[keepaspectratio=true,width=18cm, height=15cm]{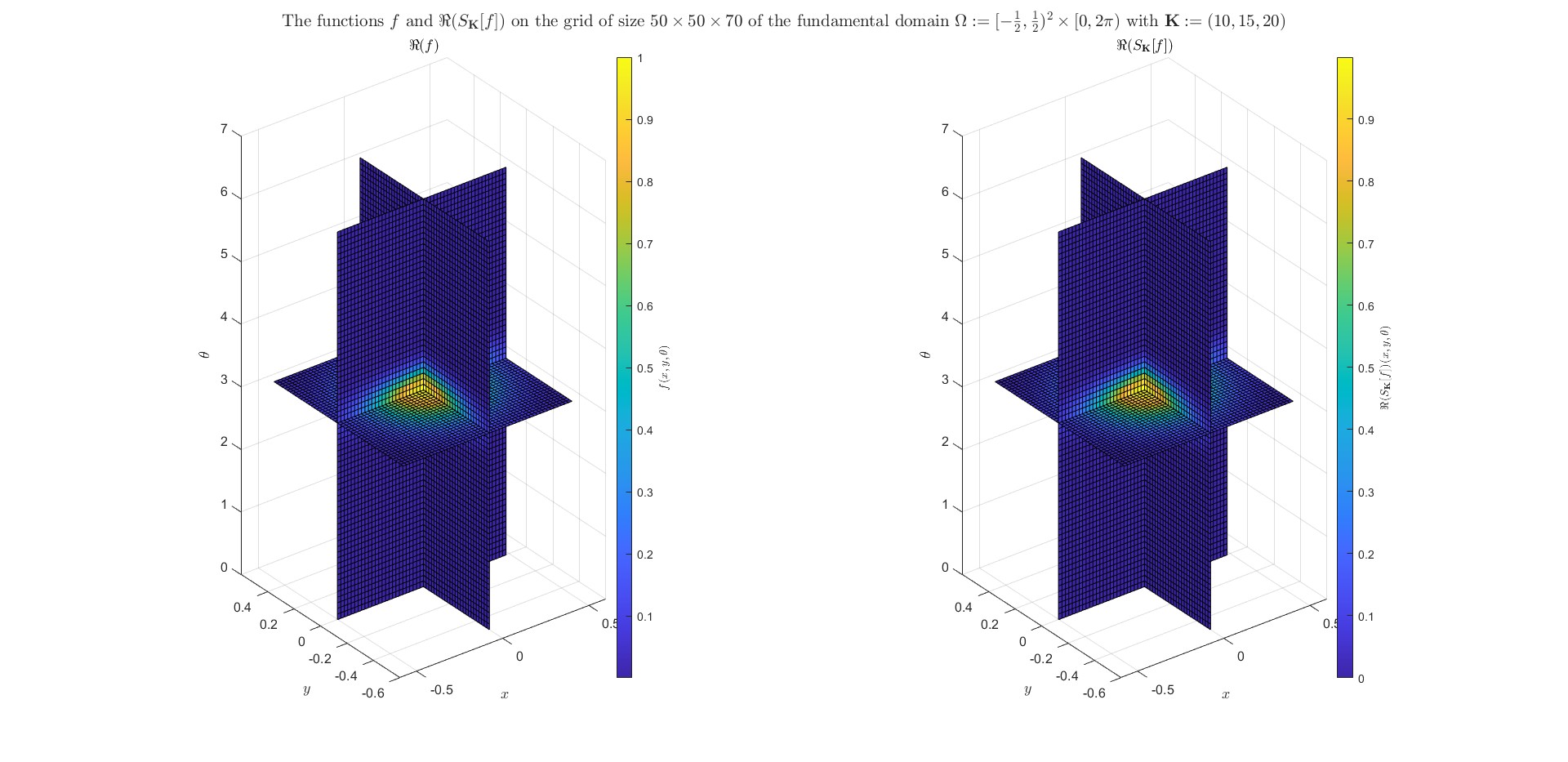}
\caption{The slice plot of $f_{\bfbeta,\bf{\Sigma}}$ and $\Re(S_{\mathbf{K}}[f_{\bfbeta,\bf{\Sigma}}])$ on the uniform grid of size $50\times 50\times 70$ of the fundamental domain $\Omega$ at slices $[0,0,\pi]$, where $\bfbeta:=(\mathbf{0},\mathbf{R}_\pi)$ and $\mathbf{\Sigma}:=\sigma^2\mathbf{I}_3$ with $\sigma^2:=0.05$.}
\label{fig:SlicSE2G}
\end{figure}

\begin{figure}[H]
\centering
\includegraphics[keepaspectratio=true,width=18cm, height=15cm]{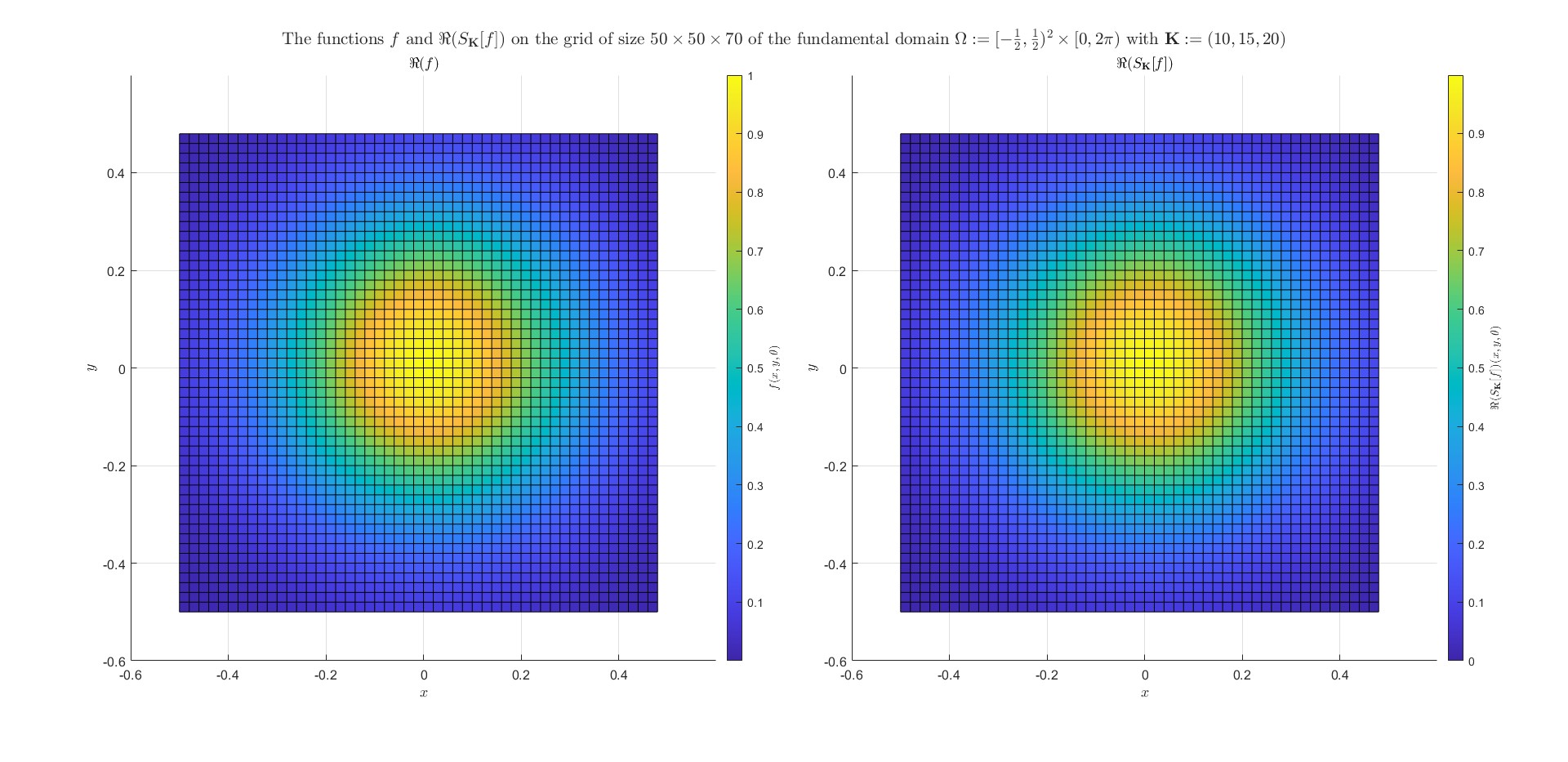}
\caption{The contour plot of $f_{\bfbeta,\bf{\Sigma}}$ and $\Re(S_{\mathbf{K}}[f_{\bfbeta,\bf{\Sigma}}])$ on the uniform grid of size $50\times 50\times 70$ of the fundamental domain $\Omega$ at $\theta=\pi$, where $\bfbeta:=(\mathbf{0},\mathbf{R}_\pi)$ and $\mathbf{\Sigma}:=\sigma^2\mathbf{I}_3$ with $\sigma^2:=0.05$.}
\label{fig:ContSE2G}
\end{figure}
\end{example}

\newpage
\section{\bf Convolutional Finite Fourier Series on $\mathbb{Z}^2\backslash SE(2)$}\label{FSCR}

This section discusses structure of convolutional finite Fourier series as a numerical approximation for convolution of functions on the right coset space $\mathbb{X}:=\mathbb{Z}^2\backslash SE(2)$ with functions on $SE(2)$ which are radial in translations. 
To begin with, we investigate structure of Fourier coefficients of $SE(2)$-convolutions with functions which are radial in translations and supported in the fundamental domain $\Omega$. Then we consider Fourier coefficients of convolution of functions on $SE(2)$ which are radial in translations with functions on $\mathbb{Z}^2\backslash SE(2)$. 

\subsection{Structure of Fourier coefficients of convolutions}\label{CNV-FS-O}
This part presents the $SE(2)$-convolution property of Fourier series of functions supported in $\Omega$. In details, we study the Fourier coefficients of $SE(2)$-convolutions supported in $\Omega$.

For $f_j\in L^1(SE(2))$ with $j\in\{1,2\}$, the $SE(2)$-convolution $f_1\star f_2\in L^1(SE(2))$ is given by 
\begin{equation}\label{fstarg}
(f_1\star f_2)(\mathbf{h})=\int_{SE(2)}f_1(\mathbf{g})f_2(\mathbf{g}^{-1}\circ\mathbf{h})\D\mathbf{g},\hspace{1cm}{\rm for}\ \ \mathbf{h}\in SE(2).
\end{equation} 
Suppose $\mathbf{h}=(\mathbf{x},\mathbf{R})$ and $\mathbf{g}=(\mathbf{y},\mathbf{S})$ where $\mathbf{x},\mathbf{y}\in\mathbb{R}^2$ and $\mathbf{R},\mathbf{S}\in SO(2)$. Then
\begin{align*}
\mathbf{g}^{-1}\circ\mathbf{h}&=(\mathbf{y},\mathbf{S})^{-1}\circ(\mathbf{x},\mathbf{R})
=(-\mathbf{S}^{-1}\mathbf{y},\mathbf{S}^{-1})\circ(\mathbf{x},\mathbf{R})
\\&=(-\mathbf{S}^{-1}\mathbf{y}+\mathbf{S}^{-1}\mathbf{x},\mathbf{S}^{-1}\mathbf{R})
=(\mathbf{S}^{-1}(\mathbf{x}-\mathbf{y}),\mathbf{S}^{-1}\mathbf{R}).
\end{align*}
Therefore,  (\ref{fstarg}) reduces to 
\begin{equation}\label{f*g}
(f_1\star f_2)(\mathbf{x},\mathbf{R})=\int_{SO(2)}\int_{\mathbb{R}^2}f_1(\mathbf{y},\mathbf{S})f_2(\mathbf{S}^{-1}(\mathbf{x}-\mathbf{y}),\mathbf{S}^{-1}\mathbf{R})\D\mathbf{y}\D\mathbf{S}.
\end{equation}
In addition, $\mathrm{supp}(f_1\star f_2)
\subseteq\mathrm{supp}(f_1)\circ\mathrm{supp}(f_2)$,
where $\circ$ denotes the $SE(2)$ group operation given in (\ref{SE2law}).
So, if $f_1,f_2$ have compact supports then $f_1\star f_2$ is compactly supported as well. In particular, if $f_j\in L^1(SE(2))$ ($j\in\{1,2\}$) are functions supported in $\mathbb{D}_{a}$ (resp. $\mathbb{D}_b$). Then $f_1\star f_2$ is supported in $\mathbb{D}_{a+b}$, see Proposition 3.1 of \cite{AGHF.GSC.JAT}.

Next, we prove $SE(2)$-convolution property for functions of the form $f\star \rho$ where $f$ is arbitrary and $\rho$ is radial in translations. \footnote{A function $\rho:SE(2)\to\mathbb{C}$ is called radial in translations if $\rho(\mathbf{x},\mathbf{R})=\rho(\mathbf{Sx},\mathbf{R}),$ 
for every $\mathbf{R},\mathbf{S}\in SO(2)$ and $\mathbf{x}\in\mathbb{R}^2$.}

\begin{theorem}\label{f*gP}
{\it Let $f,\rho\in L^1(SE(2))$ be functions supported in $\mathbb{D}_{1/4}$ with $\rho$ radial in translations. 
\begin{enumerate}
\item If $(\mathbf{x},\mathbf{R}_\theta)\in SE(2)$ then 
\begin{equation}\label{f*gRadialMain}
f\star\rho(\mathbf{x},\mathbf{R}_\theta)=\frac{1}{2\pi}\int_{0}^{2\pi}\int_{\mathbb{R}^2}f(\mathbf{y},\mathbf{R}_\alpha)\rho(\mathbf{x}-\mathbf{y},\mathbf{R}_{\theta-\alpha})\D\mathbf{y}\D\alpha.
\end{equation}
\item If $\mathbf{k}\in\mathbb{I}$ then $\widehat{f\star\rho}[\mathbf{k}]=\widehat{f}[\mathbf{k}]\widehat{\rho}[\mathbf{k}]$.
\end{enumerate}
}\end{theorem}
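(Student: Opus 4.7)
The plan is to prove the two parts sequentially. Part~(1) reduces the $SE(2)$-convolution integral (\ref{f*g}) to an Abelian-looking one on $\mathbb{R}^2\times[0,2\pi)$ by exploiting the radial-in-translations hypothesis on $\rho$. Part~(2) then inserts this reduced form into the definition (\ref{fk}) of the Fourier coefficient and factors it via the multiplicative identity (\ref{pkxy}) for $\phi_\mathbf{k}$, yielding the product formula $\widehat{f\star\rho}[\mathbf{k}]=\widehat{f}[\mathbf{k}]\widehat{\rho}[\mathbf{k}]$.

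For part~(1), I would start from (\ref{f*g}) with $f_1:=f$ and $f_2:=\rho$, parametrize $SO(2)$ by angles so that $\mathbf{S}=\mathbf{R}_\alpha$ and $\mathbf{R}=\mathbf{R}_\theta$, and use the Haar-measure normalization from (\ref{dg}) to write $\D\mathbf{S}=\D\alpha/(2\pi)$. The integrand contains $\rho(\mathbf{R}_\alpha^{-1}(\mathbf{x}-\mathbf{y}),\mathbf{R}_\alpha^{-1}\mathbf{R}_\theta)$; the radial-in-translations hypothesis on $\rho$ (applied with the rotation $\mathbf{R}_\alpha$) collapses the first argument to $\mathbf{x}-\mathbf{y}$, while the second is $\mathbf{R}_{\theta-\alpha}$. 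These substitutions give (\ref{f*gRadialMain}) directly.

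For part~(2), I would substitute (\ref{f*gRadialMain}) into $\widehat{f\star\rho}[\mathbf{k}]=\int_\Omega(f\star\rho)(\omega)\,\overline{\phi_\mathbf{k}(\omega)}\,\D\omega$, use Fubini to bring the outer integral inside, and then apply (\ref{pkxy}) to factor
\[
\overline{\phi_\mathbf{k}(\mathbf{x},\mathbf{R}_\theta)}=\overline{\phi_\mathbf{k}(\mathbf{y},\mathbf{R}_\alpha)}\cdot\overline{\phi_\mathbf{k}(\mathbf{x}-\mathbf{y},\mathbf{R}_{\theta-\alpha})}.
\]
Because $f$ and $\rho$ are supported in $\mathbb{D}_{1/4}$, the pairs $\mathbf{y},\mathbf{x}-\mathbf{y}\in[-1/4,1/4]^2$ whenever the integrand is nonzero, so $\mathbf{x}\in[-1/2,1/2]^2$; this permits extending the $\mathbf{x}$-integration from $[-1/2,1/2]^2$ to $\mathbb{R}^2$ without change. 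The substitutions $\mathbf{z}:=\mathbf{x}-\mathbf{y}$ (Jacobian $1$) and $\beta:=\theta-\alpha$ (valid by $2\pi$-periodicity of the angular integral) then decouple the quadruple integral into the product of two double integrals; using the supports to reinterpret each factor as an integral over $\Omega$, these become exactly $\widehat{f}[\mathbf{k}]$ and $\widehat{\rho}[\mathbf{k}]$.

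The only delicate step is the bookkeeping in part~(2): one must justify extending the spatial integral to $\mathbb{R}^2$ and performing the angular shift by $\alpha$ so that the double integrals genuinely decouple. Both moves are routine given that $\mathrm{supp}(f),\mathrm{supp}(\rho)\subseteq\mathbb{D}_{1/4}$ and that the angular integration is over a full period of $2\pi$. Apart from this, the argument is the classical Abelian convolution theorem applied to the reduced identity (\ref{f*gRadialMain}), with (\ref{pkxy}) playing the role of the multiplicative character property.
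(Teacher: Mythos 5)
Your proposal is correct and follows essentially the same route as the paper's proof: part (1) by applying the radial hypothesis $\rho(\mathbf{S}^{-1}(\mathbf{x}-\mathbf{y}),\mathbf{S}^{-1}\mathbf{R})=\rho(\mathbf{x}-\mathbf{y},\mathbf{S}^{-1}\mathbf{R})$ inside (\ref{f*g}), and part (2) by substituting (\ref{f*gRadialMain}) into (\ref{fk}), using the support in $\mathbb{D}_{1/4}$ to extend the integration to $\mathbb{R}^2$, applying Fubini and the character identity (\ref{pkxy}), and decoupling via the translation/rotation change of variables. The only cosmetic difference is that the paper shifts variables as $(\mathbf{x},\theta)\mapsto(\mathbf{x}+\mathbf{y},\theta+\alpha)$ inside the inner integral rather than writing $\mathbf{z}=\mathbf{x}-\mathbf{y}$, $\beta=\theta-\alpha$, which is the same substitution.
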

\begin{proof}
Suppose $f,\rho\in L^1(SE(2))$ are functions supported in $\mathbb{D}_{1/4}$. Then $f\star \rho$ is supported in $\mathbb{D}_{1/2}$. So, $f\star \rho$ is supported in $\Omega$. 
(1) Since $\rho$ is radial in translations, we get  
\begin{equation}\label{gRadial}
\rho(\mathbf{S}^{-1}(\mathbf{x}-\mathbf{y}),\mathbf{S}^{-1}\mathbf{R})=\rho(\mathbf{x}-\mathbf{y},\mathbf{S}^{-1}\mathbf{R}).
\end{equation}
Therefore, applying (\ref{gRadial}) in (\ref{f*g}), we achieve  
\begin{equation}\label{f*gRadial}
f\star \rho(\mathbf{x},\mathbf{R})=\int_{SO(2)}\int_{\mathbb{R}^2}f(\mathbf{y},\mathbf{S})\rho(\mathbf{x}-\mathbf{y},\mathbf{S}^{-1}\mathbf{R})\D\mathbf{y}\D\mathbf{S}
\end{equation}
which implies the following explicit form 
\begin{equation}
f\star \rho(\mathbf{x},\mathbf{R}_\theta)=\frac{1}{2\pi}\int_{0}^{2\pi}\int_{\mathbb{R}^2}f(\mathbf{y},\mathbf{R}_\alpha)\rho(\mathbf{x}-\mathbf{y},\mathbf{R}_{\theta-\alpha})\D\mathbf{y}\D\alpha
\end{equation}
for every $(\mathbf{x},\mathbf{R}_\theta)\in SE(2)$ with $\mathbf{x}\in\mathbb{R}^2$ and $\theta\in [0,2\pi)$.

(2) Applying (\ref{pkxy}) and (\ref{f*gRadialMain}) we get 
\begin{align*}
\widehat{f\star \rho}[\mathbf{k}]&=\langle f\star \rho,\phi_{\mathbf{k}}\rangle_{L^2(\Omega)}
=\int_{\Omega}f\star \rho(\omega)\overline{\phi_{\mathbf{k}}(\omega)}\D\omega
=\frac{1}{2\pi}\int_{0}^{2\pi}\int_{\mathbb{R}^2}f\star \rho(\mathbf{x},\mathbf{R}_\theta)\overline{\phi_{\mathbf{k}}(\mathbf{x},\mathbf{R}_\theta)}\D\mathbf{x}\D\theta
\\&=\frac{1}{4\pi^2}\int_{0}^{2\pi}\int_{\mathbb{R}^2}\left(\int_{0}^{2\pi}\int_{\mathbb{R}^2}f(\mathbf{y},\mathbf{R}_\alpha)\rho(\mathbf{x}-\mathbf{y},\mathbf{R}_{\theta-\alpha})\D\mathbf{y}\D\alpha\right)\overline{\phi_{\mathbf{k}}(\mathbf{x},\mathbf{R}_\theta)}\D\mathbf{x}\D\theta
\\&=\frac{1}{4\pi^2}\int_{0}^{2\pi}\int_{\mathbb{R}^2}f(\mathbf{y},\mathbf{R}_\alpha)\left(\int_{0}^{2\pi}\int_{\mathbb{R}^2}\rho(\mathbf{x}-\mathbf{y},\mathbf{R}_{\theta-\alpha})\overline{\phi_{\mathbf{k}}(\mathbf{x},\mathbf{R}_\theta)}\D\mathbf{x}\D\theta\right)\D\mathbf{y}\D\alpha
\\&=\frac{1}{4\pi^2}\int_{0}^{2\pi}\int_{\mathbb{R}^2}f(\mathbf{y},\mathbf{R}_\alpha)\left(\int_{0}^{2\pi}\int_{\mathbb{R}^2}\rho(\mathbf{x},\mathbf{R}_{\theta})\overline{\phi_{\mathbf{k}}(\mathbf{x}+\mathbf{y},\mathbf{R}_{\theta+\alpha})}\D\mathbf{x}\D\theta\right)\D\mathbf{y}\D\alpha
\\&=\frac{1}{4\pi^2}\int_{0}^{2\pi}\int_{\mathbb{R}^2}f(\mathbf{y},\mathbf{R}_\alpha)\overline{\phi_{\mathbf{k}}(\mathbf{y},\mathbf{R}_{\alpha})}\left(\int_{0}^{2\pi}\int_{\mathbb{R}^2}\rho(\mathbf{x},\mathbf{R}_{\theta})\overline{\phi_{\mathbf{k}}(\mathbf{x},\mathbf{R}_{\theta})}\D\mathbf{x}\D\theta\right)\D\mathbf{y}\D\alpha
\\&=\frac{1}{4\pi^2}\left(\int_{0}^{2\pi}\int_{\mathbb{R}^2}f(\mathbf{y},\mathbf{R}_\alpha)\overline{\phi_{\mathbf{k}}(\mathbf{y},\mathbf{R}_{\alpha})}\D\mathbf{y}\D\alpha\right)\left(\int_{0}^{2\pi}\int_{\mathbb{R}^2}\rho(\mathbf{x},\mathbf{R}_{\theta})\overline{\phi_{\mathbf{k}}(\mathbf{x},\mathbf{R}_{\theta})}\D\mathbf{x}\D\theta\right)
\\&=\langle f,\phi_\mathbf{k}\rangle_{L^2(\Omega)}\langle \rho,\phi_\mathbf{k}\rangle_{L^2(\Omega)}=\widehat{f}[\mathbf{k}]\widehat{\rho}[\mathbf{k}].\qedhere
\end{align*}
\end{proof}

Then, we discuss structure of Fourier series for convolution of functions on $SE(2)$ which are radial in translations with functions on $\mathbb{X}$.

The unified theory for convolution of functions on coset (homogeneous) spaces of compact subgroups of locally compact groups studied in \cite{AGHF.BMMSS, AGHF.IJM, AGHF.BBMSS}.  The later theory has been applied for the case of compact subgroups of special Euclidean groups in \cite{Duits.PDE-G-CNN} and for the case of compact groups in \cite{Kondor}. This unified theory can be canonically reformulated for convolution integrals on right coset space of compact subgroups in $SE(2)$ which is not the case for $\mathbb{L}\backslash SE(2)$. In the case of coset spaces of arbitrary discrete subgroups of $SE(2)$, the unified structure of convolution module action of $SE(2)$ developed in \cite{AGHF.GSC.PAMQ}. 

Let $f\in L^1(SE(2))$ and $\psi\in L^1(\mathbb{X},\mu)$.  The convolution of $f$ with $\psi$ is defined as the function $\psi\oslash f:\mathbb{X}\to\mathbb{C}$ given by  
\begin{equation}\label{oslash}
(\psi\oslash f)(\mathbb{L}\mathbf{g}):=\int_{SE(2)} \psi(\mathbb{L}\mathbf{h})f(\mathbf{h}^{-1}\circ\mathbf{g})\dd\mathbf{h},
\end{equation}
for $\mathbf{g}\in SE(2)$.
Since $SE(2)$ is a unimodular group, we get 
\[
(\psi\oslash f)(\mathbb{L}\mathbf{g})=\int_{SE(2)} \psi(\mathbb{L}\mathbf{g}\circ\mathbf{h})f(\mathbf{h}^{-1})\dd\mathbf{h},
\]
for $f\in L^1(SE(2))$,  $\psi\in L^1(\mathbb{X},\mu)$,  and $\mathbf{g}\in SE(2)$. It is also shown that if $f,g\in L^1(SE(2))$ then  
\begin{equation}\label{Tf*g}
\widetilde{(f\star g)}(\mathbb{L}\mathbf{g})=(\widetilde{f}\oslash g)(\mathbb{L}\mathbf{g}),
\end{equation}  
for every $\mathbf{g}\in SE(2)$, see Theorem 4.2 of \cite{AGHF.GSC.PAMQ}.

\begin{proposition}
{\it Let $f,\rho\in L^1(SE(2))$ be functions supported in $\mathbb{D}_{1/4}$ with $\rho$ radial in translations. Suppose that $\psi:=\widetilde{f}$ and $\mathbf{k}\in\mathbb{I}$. Then 
\[
\widehat{\psi\oslash \rho}\{\mathbf{k}\}=\widehat{\psi}\{\mathbf{k}\}\widehat{\rho}[\mathbf{k}].
\]
}\end{proposition}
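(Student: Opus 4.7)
The plan is to reduce the coset-space convolution identity to the already established convolution identity on the fundamental domain $\Omega$ via the periodization bridge (\ref{Tf*g}) and the Fourier coefficient identification (\ref{psikf}).

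First, I would observe that since $f$ and $\rho$ are both supported in $\mathbb{D}_{1/4}$, the remark following (\ref{f*g}) (together with $\mathbb{D}_{1/4} \circ \mathbb{D}_{1/4} \subseteq \mathbb{D}_{1/2}$) implies that $f \star \rho$ is supported in $\mathbb{D}_{1/2} \subseteq \Omega$. In particular, $f \star \rho$ is supported in the fundamental domain, so its $\mathbb{L}$-periodization and its Fourier coefficients on $\Omega$ are directly comparable via formula (\ref{psikf}):
\[
\widehat{\widetilde{f \star \rho}}\{\mathbf{k}\} \,=\, \widehat{f \star \rho}[\mathbf{k}].
\]
Likewise, because $f$ itself is supported in $\mathbb{D}_{1/4} \subset \Omega$, the same formula yields $\widehat{\psi}\{\mathbf{k}\} = \widehat{f}[\mathbf{k}]$, which will allow me to convert between the two Fourier coefficient notations at the end.

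Next, I would invoke the identity (\ref{Tf*g}) from \cite{AGHF.GSC.PAMQ}, which rewrites the periodization of an $SE(2)$-convolution as a coset-space convolution. Applied to $g = \rho$ this gives $\widetilde{f \star \rho} = \widetilde{f} \oslash \rho = \psi \oslash \rho$. Taking Fourier coefficients of both sides and using the previous observation, I obtain
\[
\widehat{\psi \oslash \rho}\{\mathbf{k}\} \,=\, \widehat{\widetilde{f \star \rho}}\{\mathbf{k}\} \,=\, \widehat{f \star \rho}[\mathbf{k}].
\]

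Finally, I would apply part (2) of Theorem \ref{f*gP}, whose hypotheses are exactly satisfied here (both $f$ and $\rho$ supported in $\mathbb{D}_{1/4}$, $\rho$ radial in translations), to conclude $\widehat{f \star \rho}[\mathbf{k}] = \widehat{f}[\mathbf{k}] \widehat{\rho}[\mathbf{k}]$. Replacing $\widehat{f}[\mathbf{k}]$ by $\widehat{\psi}\{\mathbf{k}\}$ via (\ref{psikf}) gives the desired identity
\[
\widehat{\psi \oslash \rho}\{\mathbf{k}\} \,=\, \widehat{\psi}\{\mathbf{k}\} \widehat{\rho}[\mathbf{k}].
\]

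There is essentially no obstacle here beyond bookkeeping: the entire proof is a chain of three equalities, and the only thing to verify carefully is that the support condition $\mathbb{D}_{1/4}$ is strong enough to push the convolution into $\Omega$ so that formula (\ref{psikf}) is applicable to $f \star \rho$. All nontrivial analytic content was absorbed in Theorem \ref{f*gP}(2) and in the periodization identity (\ref{Tf*g}).
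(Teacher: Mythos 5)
Your proof is correct and follows essentially the same route as the paper: both rely on the chain $\widehat{\psi\oslash\rho}\{\mathbf{k}\}=\widehat{\widetilde{f\star\rho}}\{\mathbf{k}\}=\widehat{f\star\rho}[\mathbf{k}]=\widehat{f}[\mathbf{k}]\widehat{\rho}[\mathbf{k}]=\widehat{\psi}\{\mathbf{k}\}\widehat{\rho}[\mathbf{k}]$ using (\ref{Tf*g}), (\ref{psikf}), and Theorem \ref{f*gP}(2). Your extra care in checking that $f\star\rho$ lands in $\mathbb{D}_{1/2}\subseteq\Omega$ so that (\ref{psikf}) applies is a welcome explicit verification of a point the paper leaves implicit.
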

\begin{proof}
Applying (\ref{psikf}), Theorem \ref{f*gP}(2) and (\ref{Tf*g}), we achieve 
\begin{align*}
\widehat{\psi\oslash \rho}\{\mathbf{k}\}
&=\widehat{\widetilde{f}\oslash \rho}\{\mathbf{k}\}=\widehat{\widetilde{f\star \rho}}\{\mathbf{k}\}
\\&=\widehat{f\star \rho}[\mathbf{k}]=\widehat{f}[\mathbf{k}]\widehat{\rho}[\mathbf{k}]=\widehat{\psi}\{\mathbf{k}\}\widehat{\rho}[\mathbf{k}].\qedhere
\end{align*}
\end{proof}

\subsection{Convolutional finite Fourier series}\label{CNV-FFS}
We here develop a unified notion of convolutional finite Fourier series, which can be considered as a fast and accurate computable approximation for convolutions of functions on $\mathbb{Z}^2\backslash SE(2)$ with functions on $SE(2)$ which are radial in translations. 

To begin with, we consider structure of the unified numerical scheme for convolutions supported in the fundamental domain $\Omega$. 

Let $f,\rho:SE(2)\to\mathbb{C}$ be integrable functions supported in $\mathbb{D}_{1/4}$ with $\rho$ radial in translations.  Then $f\star \rho$ is supported in $\Omega$ .  Assume $\mathbf{K}\in\mathbb{N}^3$ and $\mathbf{L}:=2\mathbf{K}+1$.  Then finite Fourier series of order $\mathbf{K}\in\mathbb{N}^3$ of the function $f\star \rho$,  given by (\ref{SKf}), reads as 
\[
S_\mathbf{K}[f\star \rho](\omega)=\sum_{|\mathbf{k}|\le\mathbf{K}}\widehat{f\star\rho}[\mathbf{k};\mathbf{L}]\phi_{\mathbf{k}}(\omega),
\]
for every $\omega\in\Omega$, where 
\begin{equation}\label{f*gkLv}
\widehat{f\star \rho}[\mathbf{k};\mathbf{L}]=\frac{1}{L_\mathrm{x}L_\mathrm{y}L_\mathrm{r}}\sum_{i=1}^{L_\mathrm{x}}\sum_{j=1}^{L_\mathrm{y}}\sum_{l=1}^{L_\mathrm{r}}f\star \rho(x_i,y_j,\theta_l)\overline{\phi_\mathbf{k}(x_i,y_j,\theta_l)},
\end{equation}
with $x_i:=-\frac{1}{2}+\frac{(i-1)}{L_\mathrm{x}}$, $y_j:=-\frac{1}{2}+\frac{(j-1)}{L_\mathrm{y}}$ and $\theta_l:=\frac{2\pi(l-1)}{L_\mathrm{r}}$ for every $\mathbf{1}\le (i,j,l)\le\mathbf{L}=(L_\x,L_\y,L_\rt)$.

The finite Fourier coefficient (\ref{f*gkLv}) requires values of the $SE(2)$-convolution integral $f\star \rho$ on the uniform sampling grid of size $L_\x\times L_\y\times L_\rt$.  This canonically imposes several practical disadvantages/inflexibilities to the finite Fourier series numerical algorithm (\ref{SKvf}) when applied to convolutions.  To begin with, it makes the algorithms computationally expensive.  In addition,  the numerical scheme is not applicable to problems which the ultimate goal is approximating values of the $SE(2)$-convolutions. 

To improve this, we develop a unified finite Fourier series for convolution of functions $f,\rho$ which are supported in $\mathbb{D}_{1/4}$ and $\rho$ is radial in translations, called {\it convolutional finite Fourier series} denoted by $S_\mathbf{K}[f,\rho]$,  as an accurate and numerical approximation of $S_\mathbf{K}[f\star \rho]$ which can be computed independent of the values of the $SE(2)$-convolution integral itself.  The guiding idea is to approximate the finite Fourier transform $
\widehat{f\star \rho}[\mathbf{k};L]$
with the multiplication of finite Fourier transforms 
$\widehat{f}[\mathbf{k};L]\widehat{\rho}[\mathbf{k};L]$.

Initially,  we prove that multiplication of finite Fourier transform of functions can be considered as an accurate approximation for finite Fourier transform of their convolutions. 

\begin{theorem}\label{fgkL=fkLgkL}
Let $f,\rho\in\mathcal{C}^1(SE(2))$ be functions supported in $\mathbb{D}_{1/4}^\circ$ with $\rho$ radial in translations. Suppose $\mathbf{K}\in\mathbb{N}^3$ and $\mathbf{L}:=2\mathbf{K}+1$. Then 
\[
\left|\widehat{f\star \rho}[\mathbf{k};\mathbf{L}]-\widehat{f}[\mathbf{k};\mathbf{L}]\widehat{\rho}[\mathbf{k};\mathbf{L}]\right|\le \mathcal{O}\left(\frac{1}{\min(\mathbf{K})}\right)\hspace{1cm}{\rm if}\ |\mathbf{k}|\le\mathbf{\mathbf{K}}.
\]
\end{theorem}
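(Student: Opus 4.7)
The plan is to split the error using the exact convolution identity from Theorem \ref{f*gP}(2) as a bridge between the discrete quantity $\widehat{f}[\mathbf{k};\mathbf{L}]\widehat{\rho}[\mathbf{k};\mathbf{L}]$ and the discrete quantity $\widehat{f\star\rho}[\mathbf{k};\mathbf{L}]$. Concretely, inserting $\widehat{f\star\rho}[\mathbf{k}] = \widehat{f}[\mathbf{k}]\widehat{\rho}[\mathbf{k}]$ (valid here because $f,\rho$ are supported in $\mathbb{D}_{1/4}^\circ$) and applying the triangle inequality yields
\[
\left|\widehat{f\star\rho}[\mathbf{k};\mathbf{L}]-\widehat{f}[\mathbf{k};\mathbf{L}]\widehat{\rho}[\mathbf{k};\mathbf{L}]\right|
\le \underbrace{\left|\widehat{f\star\rho}[\mathbf{k};\mathbf{L}]-\widehat{f\star\rho}[\mathbf{k}]\right|}_{(I)}
+ \underbrace{\left|\widehat{f}[\mathbf{k}]\widehat{\rho}[\mathbf{k}]-\widehat{f}[\mathbf{k};\mathbf{L}]\widehat{\rho}[\mathbf{k};\mathbf{L}]\right|}_{(II)}.
\]

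For term $(I)$, the strategy is to verify that $f\star\rho$ meets the hypotheses of Theorem \ref{fkLmain}, so that the error bound applies directly. Since $f,\rho\in\mathcal{C}^1(SE(2))$ have compact support, standard differentiation under the integral in \eqref{f*g} shows $f\star\rho\in\mathcal{C}^1(SE(2))$ with a bounded gradient. Moreover, because $f$ and $\rho$ are supported in $\mathbb{D}_{1/4}^\circ$, one has $\mathrm{supp}(f\star\rho)\subseteq\mathbb{D}_{1/4}^\circ\circ\mathbb{D}_{1/4}^\circ\subseteq\mathbb{D}_{1/2}^\circ$, and since $\|\mathbf{x}\|_2<\tfrac{1}{2}$ forces $\mathbf{x}\in(-\tfrac{1}{2},\tfrac{1}{2})^2$, we have $\mathbb{D}_{1/2}^\circ\subset\Omega^\circ$. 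Therefore Theorem \ref{fkLmain} applied to $f\star\rho$ gives $(I)\le 32\|\nabla(f\star\rho)\|_\infty/\min(\mathbf{K})=\mathcal{O}(1/\min(\mathbf{K}))$ whenever $|\mathbf{k}|\le\mathbf{K}$.

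For term $(II)$, I would use the elementary identity $|ab-a'b'|\le|a-a'|\,|b|+|a'|\,|b-b'|$ with $a=\widehat{f}[\mathbf{k}]$, $a'=\widehat{f}[\mathbf{k};\mathbf{L}]$, $b=\widehat{\rho}[\mathbf{k}]$, $b'=\widehat{\rho}[\mathbf{k};\mathbf{L}]$. The differences $|a-a'|$ and $|b-b'|$ are each bounded by $32\|\nabla f\|_\infty/\min(\mathbf{K})$ and $32\|\nabla\rho\|_\infty/\min(\mathbf{K})$ via Theorem \ref{fkLmain}, since $f$ and $\rho$ are $\mathcal{C}^1$ and supported in $\mathbb{D}_{1/4}^\circ\subset\Omega^\circ$. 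The multiplicative factors $|\widehat{\rho}[\mathbf{k}]|$ and $|\widehat{f}[\mathbf{k};\mathbf{L}]|$ are uniformly bounded in $\mathbf{k}$ and $\mathbf{L}$ by the trivial estimates $|\widehat{\rho}[\mathbf{k}]|\le\|\rho\|_{L^1(\Omega)}\le|\Omega|\,\|\rho\|_\infty$ and $|\widehat{f}[\mathbf{k};\mathbf{L}]|\le\|f\|_\infty$, which follow from \eqref{fk} and \eqref{fkLvMain} respectively. Combining, $(II)=\mathcal{O}(1/\min(\mathbf{K}))$.

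The only subtle point is the verification that $f\star\rho$ inherits both $\mathcal{C}^1$-regularity and, crucially, compact support strictly inside $\Omega^\circ$; this is what legitimizes the direct invocation of Theorem \ref{fkLmain} for $f\star\rho$ in step $(I)$ and is the main place where the support hypothesis $\mathbb{D}_{1/4}^\circ$ (rather than, say, $\mathbb{D}_{1/2}^\circ$) is used. Adding the two $\mathcal{O}(1/\min(\mathbf{K}))$ bounds yields the claim.
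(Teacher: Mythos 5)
Your proposal is correct and follows essentially the same route as the paper: insert $\widehat{f\star\rho}[\mathbf{k}]=\widehat{f}[\mathbf{k}]\widehat{\rho}[\mathbf{k}]$ from Theorem \ref{f*gP}(2), apply the triangle inequality, and bound each piece by Theorem \ref{fkLmain} applied to $f\star\rho$ and to $f,\rho$ separately. In fact you supply two details the paper leaves implicit --- the product estimate $|ab-a'b'|\le|a-a'|\,|b|+|a'|\,|b-b'|$ with uniform bounds on the factors, and the verification that $f\star\rho$ is $\mathcal{C}^1$ and supported in $\mathbb{D}_{1/2}^\circ\subset\Omega^\circ$ --- so no changes are needed.
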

\begin{proof}
Assume that $\mathbf{k}\in\mathbb{I}$ and $|\mathbf{k}|\le\mathbf{K}$. Using Theorem \ref{fkLmain} for $f$ and $\rho$, we achieve 
\[
\left|\widehat{f}[\mathbf{k}]-\widehat{f}[\mathbf{k};\mathbf{L}]\right|\le\mathcal{O}\left(\frac{1}{\min(\mathbf{K})}\right),
\hspace{1cm}
\left|\widehat{\rho}[\mathbf{k}]-\widehat{\rho}[\mathbf{k};\mathbf{L}]\right|\le\mathcal{O}\left(\frac{1}{\min(\mathbf{K})}\right),
\]
implying that  
\begin{equation}\label{fkgk-fkLgkL}
\left|\widehat{f}[\mathbf{k}]\widehat{\rho}[\mathbf{k}]-\widehat{f}[\mathbf{k};\mathbf{L}]\widehat{\rho}[\mathbf{k};\mathbf{L}]\right|\le\mathcal{O}\left(\frac{1}{\min(\mathbf{K})}\right).
\end{equation}
In addition, applying Theorem \ref{fkLmain} for $f\star\rho$, we get 
\begin{equation}\label{fgk-fgkL}
\left|\widehat{f\star \rho}[\mathbf{k}]-\widehat{f\star \rho}[\mathbf{k};\mathbf{L}]\right|\le\mathcal{O}\left(\frac{1}{\min(\mathbf{K})}\right).
\end{equation}
Then applying Theorem \ref{f*gP}(2), (\ref{fkgk-fkLgkL}) and (\ref{fgk-fgkL}), we achieve 
\begin{align*}
\left|\widehat{f\star \rho}[\mathbf{k};\mathbf{L}]-\widehat{f}[\mathbf{k};\mathbf{L}]\widehat{\rho}[\mathbf{k};\mathbf{L}]\right|
&\le\left|\widehat{f\star \rho}[\mathbf{k};\mathbf{L}]-\widehat{f\star \rho}[\mathbf{k}]\right|+\left|\widehat{f\star \rho}[\mathbf{k}]-\widehat{f}[\mathbf{k};\mathbf{L}]\widehat{\rho}[\mathbf{k};\mathbf{L}]\right|
\\&\le \mathcal{O}\left(\frac{1}{\min(\mathbf{K})}\right)+\left|\widehat{f}[\mathbf{k}]\widehat{\rho}[\mathbf{k}]-\widehat{f}[\mathbf{k};\mathbf{L}]\widehat{\rho}[\mathbf{k};\mathbf{L}]\right|\le 2\mathcal{O}\left(\frac{1}{\min(\mathbf{K})}\right).\qedhere
\end{align*}
\end{proof}

\begin{corollary}
{\it Let $f,\rho\in\mathcal{C}^1(SE(2))$ be functions supported in $\mathbb{D}_{1/4}^\circ$ with $\rho$ radial in translations. Suppose $K\in\mathbb{N}$. Then 
\[
\left|\widehat{f\star \rho}[\mathbf{k};2K+1]-\widehat{f}[\mathbf{k};2K+1]\widehat{\rho}[\mathbf{k};2K+1]\right|\le \mathcal{O}\left(\frac{1}{K}\right)\hspace{1cm}{\rm if}\ \|\mathbf{k}\|_\infty\le K.
\]
}\end{corollary}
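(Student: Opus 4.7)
The plan is to obtain this corollary as an immediate specialization of Theorem \ref{fgkL=fkLgkL}. The theorem is stated for an arbitrary multi-index $\mathbf{K} \in \mathbb{N}^3$ with the bound depending on $\min(\mathbf{K})$, whereas the corollary concerns the isotropic case $\mathbf{K} = (K,K,K)$ with a single integer $K \in \mathbb{N}$. All of the essential work — the triangle-inequality split into $\widehat{f\star\rho}[\mathbf{k};\mathbf{L}] - \widehat{f\star\rho}[\mathbf{k}]$ and $\widehat{f\star\rho}[\mathbf{k}] - \widehat{f}[\mathbf{k};\mathbf{L}]\widehat{\rho}[\mathbf{k};\mathbf{L}]$, together with invocation of Theorem \ref{fkLmain} on each of $f$, $\rho$, and $f\star\rho$, and the product identity of Theorem \ref{f*gP}(2) — has already been done in Theorem \ref{fgkL=fkLgkL}. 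Nothing additional about convolution needs to be unpacked here.

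Concretely, I would set $\mathbf{K} := (K,K,K) \in \mathbb{N}^3$ and $\mathbf{L} := 2\mathbf{K}+1 = (2K+1,2K+1,2K+1)$. The hypothesis $\|\mathbf{k}\|_\infty \le K$ is by definition equivalent to $|k_i| \le K$ for $i=1,2,3$, i.e.\ $|\mathbf{k}| \le \mathbf{K}$ in the componentwise order of Section 2.1. The hypotheses of Theorem \ref{fgkL=fkLgkL} are satisfied: $f,\rho \in \mathcal{C}^1(SE(2))$ are supported in $\mathbb{D}_{1/4}^\circ$, and $\rho$ is radial in translations. Applying the theorem yields
\[
\bigl|\widehat{f\star\rho}[\mathbf{k};\mathbf{L}] - \widehat{f}[\mathbf{k};\mathbf{L}]\widehat{\rho}[\mathbf{k};\mathbf{L}]\bigr| \le \mathcal{O}\!\left(\frac{1}{\min(\mathbf{K})}\right).
\]

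Finally, observe that $\min(\mathbf{K}) = \min\{K,K,K\} = K$ and that $\widehat{f}[\mathbf{k};\mathbf{L}] = \widehat{f}[\mathbf{k};2K+1]$ (and similarly for $\rho$ and $f\star\rho$) under the convention, used throughout the paper, that a scalar argument in $\widehat{\,\cdot\,}[\mathbf{k};\,\cdot\,]$ abbreviates the constant vector. Substituting these identifications into the displayed bound gives the stated inequality. There is no genuine obstacle; the only thing to be careful about is the notational matching between the multi-index $\mathbf{K}$ and the scalar $K$, and confirming that the hypothesis $\|\mathbf{k}\|_\infty \le K$ translates correctly into the componentwise hypothesis $|\mathbf{k}| \le \mathbf{K}$ required by Theorem \ref{fgkL=fkLgkL}.
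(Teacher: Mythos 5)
Your proposal is correct and is exactly the intended argument: the paper states this corollary without proof as the immediate specialization of Theorem \ref{fgkL=fkLgkL} to the isotropic multi-index $\mathbf{K}=(K,K,K)$, with $\min(\mathbf{K})=K$ and $\|\mathbf{k}\|_\infty\le K$ matching $|\mathbf{k}|\le\mathbf{K}$. Nothing further is needed.
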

Suppose $\mathbf{K}\in\mathbb{N}^3$ and $\mathbf{L}:=2\mathbf{K}+1$. Assume that $f,\rho:SE(2)\to\mathbb{C}$ are functions supported in $\mathbb{D}_{1/4}$ with $\rho$ radial in translations. The convolutional finite Fourier series, denoted by $S_\mathbf{K}[f,\rho]:SE(2)\to\mathbb{C}$, is the function given by 
\begin{equation}\label{SKfg}
S_\mathbf{K}[f,\rho](\omega):=\sum_{|\mathbf{k}|\le\mathbf{K}}\widehat{f}[\mathbf{k};\mathbf{L}]\widehat{\rho}[\mathbf{k};\mathbf{L}]\phi_{\mathbf{k}}(\omega),\hspace{0.5cm}{\rm for}\ \omega\in\Omega.
\end{equation}

\begin{remark}\label{SKfgnote}
Let $f,\rho\in\mathcal{C}^1(SE(2))$ be supported in $\mathbb{D}_{1/4}^\circ$ with $\rho$ radial in translations. If $K\in\mathbb{N}$ such that $S_K(f\star \rho)$ is a good approximation of $f\star \rho$ then $S_K[f,\rho]$ is a numerical approximation of $S_K(f\star \rho)$.  
\end{remark}

We finish this section by discussing structure of convolutional finite Fourier series on the right coset space $\mathbb{X}=\mathbb{L}\backslash SE(2)$. 

Next proposition discusses order of approximation for the finite Fourier coefficient $\widehat{\psi\oslash g}\{\mathbf{k};\mathbf{L}\}$.
\begin{proposition}
{\it Suppose $\psi\in\mathcal{C}(\mathbb{X})$ with $\psi_\Omega\in\mathcal{C}^1(SE(2))$ is  supported in $\mathbb{D}_{1/4}^\circ$. Let $\rho\in\mathcal{C}^1(SE(2))$ be a function supported in $\mathbb{D}_{1/4}$ and radial in translations. Assume $\mathbf{k}\in\mathbb{I}$ and $\mathbf{K}\in\mathbb{N}^3$. Then 
\[
\left|\widehat{\psi\oslash \rho}\{\mathbf{k};2\mathbf{K}+1\}-\widehat{\psi}\{\mathbf{k};2\mathbf{K}+1\}\widehat{\rho}[\mathbf{k};2\mathbf{K}+1]\right|\le\mathcal{O}\left(\frac{1}{\min(\mathbf{K})}\right),\hspace{1cm}\ {\rm if}\  |\mathbf{k}|\le\mathbf{K}.
\]
}\end{proposition}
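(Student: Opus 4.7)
The plan is to reduce the statement to Theorem \ref{fgkL=fkLgkL} by transferring everything from the coset space $\mathbb{X}$ to $SE(2)$ via the fundamental-domain representative $\psi_\Omega$, using the finite-dimensional analogue of formula (\ref{psikf}) together with the convolution identity (\ref{Tf*g}).

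First, I would show that the finite Fourier coefficients on $\mathbb{X}$ and on $\Omega$ coincide for representatives supported in $\Omega$. Since every sample point $\bfomega\in\mathbf{\Omega}_\mathbf{L}$ lies in $\Omega$ and $\psi(\mathbb{L}\bfomega)=\psi_\Omega(\bfomega)$, $\varphi_\mathbf{k}(\mathbb{L}\bfomega)=\phi_\mathbf{k}(\bfomega)$, comparing (\ref{psikO}) with (\ref{fkLvMain}) gives directly
\[
\widehat{\psi}\{\mathbf{k};\mathbf{L}\}=\widehat{\psi_\Omega}[\mathbf{k};\mathbf{L}],
\]
for every $\mathbf{L}\in\mathbb{N}^3$. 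This is the discrete counterpart of Remark \ref{psikpsikO}.

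Next, I would identify $\psi\oslash\rho$ with a convolution on $SE(2)$ supported in $\Omega$. By hypothesis both $\psi_\Omega$ and $\rho$ are supported in $\mathbb{D}_{1/4}$, so by the product-of-supports bound cited after (\ref{f*g}) (Proposition 3.1 of \cite{AGHF.GSC.JAT}) the classical convolution $\psi_\Omega\star\rho$ is supported in $\mathbb{D}_{1/2}\subset\Omega$. Then (\ref{Tf*g}) applied to $f=\psi_\Omega$ gives $\psi\oslash\rho=\widetilde{\psi_\Omega\star\rho}$, and since $\psi_\Omega\star\rho$ is supported in $\Omega$, its representative on $\Omega$ is the function itself; hence, by the identity proved in the previous step,
\[
\widehat{\psi\oslash\rho}\{\mathbf{k};2\mathbf{K}+1\}=\widehat{\psi_\Omega\star\rho}[\mathbf{k};2\mathbf{K}+1].
\]

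Finally, I would invoke Theorem \ref{fgkL=fkLgkL} with $f:=\psi_\Omega\in\mathcal{C}^1(SE(2))$ (supported in $\mathbb{D}_{1/4}^\circ$ by hypothesis) and the given $\rho$, which yields
\[
\bigl|\widehat{\psi_\Omega\star\rho}[\mathbf{k};2\mathbf{K}+1]-\widehat{\psi_\Omega}[\mathbf{k};2\mathbf{K}+1]\,\widehat{\rho}[\mathbf{k};2\mathbf{K}+1]\bigr|\le\mathcal{O}\!\left(\tfrac{1}{\min(\mathbf{K})}\right)
\]
whenever $|\mathbf{k}|\le\mathbf{K}$. Substituting the two coefficient identifications above converts the left-hand side into the quantity in the statement, completing the proof.

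The only subtle point I expect to have to treat carefully is the interchange of periodization, convolution, and restriction to $\Omega$: one must verify that $\psi_\Omega\star\rho$ is supported strictly inside $\Omega$ so that no contributions from neighbouring translates of the fundamental domain interfere with the finite-sum over $\mathbf{\Omega}_\mathbf{L}$. This is guaranteed by the $\mathbb{D}_{1/4}$-support hypotheses (with $\mathbb{B}_{1/4}+\mathbb{B}_{1/4}\subset\mathbb{B}_{1/2}\subset[-\tfrac12,\tfrac12)^2$), but I would spell it out explicitly since it is what makes the identification $\widehat{\psi\oslash\rho}\{\mathbf{k};\mathbf{L}\}=\widehat{\psi_\Omega\star\rho}[\mathbf{k};\mathbf{L}]$ valid and thus allows the reduction to the already-proved Theorem \ref{fgkL=fkLgkL}.
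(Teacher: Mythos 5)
Your proposal is correct and follows essentially the same route as the paper: identify $\psi\oslash\rho=\widetilde{\psi_\Omega\star\rho}$ via (\ref{Tf*g}), match the finite Fourier coefficients on $\mathbb{X}$ with those of the $\Omega$-representatives, and invoke Theorem \ref{fgkL=fkLgkL} for $f:=\psi_\Omega$. The only difference is that you spell out explicitly the two identifications (the discrete analogue of Remark \ref{psikpsikO} and the support containment $\mathbb{D}_{1/4}\circ\mathbb{D}_{1/4}\subseteq\mathbb{D}_{1/2}\subset\Omega$) that the paper's proof uses implicitly, which is a reasonable addition of detail rather than a different argument.
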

\begin{proof}
Let $\mathbf{L}:=2\mathbf{K}+1$ and $|\mathbf{k}|\le\mathbf{K}$. Since $\widetilde{\psi_\Omega}=\psi$, we get $\psi\oslash \rho=\widetilde{\psi_\Omega\star \rho}$. So, Theorem \ref{fgkL=fkLgkL} implies 
\[
\left|\widehat{\psi\oslash\rho}\{\mathbf{k};\mathbf{L}\}-\widehat{\psi}\{\mathbf{k};\mathbf{L}\}\widehat{\rho}[\mathbf{k};\mathbf{L}]\right|=\left|\widehat{\psi_\Omega\star\rho}[\mathbf{k};\mathbf{L}]-\widehat{\psi_\Omega}[\mathbf{k};\mathbf{L}]\widehat{\rho}[\mathbf{k};\mathbf{L}]\right|\le\mathcal{O}\left(\frac{1}{\min(\mathbf{K})}\right).\qedhere
\]
\end{proof}
Let $\mathbf{K}:=(K_\x,K_\y,K_\rt)\in\mathbb{N}^3$ and $\mathbf{L}:=2\mathbf{K}+1$. Suppose $\psi:\mathbb{X}\to\mathbb{C}$ and $\rho:SE(2)\to\mathbb{C}$ are functions with $\rho$ radial in translations. The finite convolutional Fourier series of $\psi\oslash \rho$ of order $\mathbf{K}$, denoted by $S_{\mathbf{K}}[\psi,\rho]:\mathbb{X}\to\mathbb{C}$, is given by 
\begin{equation}\label{SKvpsig}
S_\mathbf{K}[\psi,\rho](\mathbb{L}\mathbf{g}):=\sum_{|\mathbf{k}|\le\mathbf{K}}\widehat{\psi}\{\mathbf{k};\mathbf{L}\}\widehat{\rho}[\mathbf{k};\mathbf{L}]\varphi_{\mathbf{k}}(\mathbb{L}\mathbf{g}),\hspace{1cm}{\rm for}\hspace{0.2cm}\mathbf{g}\in SE(2).
\end{equation}

\section{\bf Numerics of Convolutional Finite Fourier Series}
This section discusses numerical aspects of convolutional finite Fourier series.
To begin with, we develop matrix form of convolutional finite Fourier series. Then some implementations of numerical convolutions in MATLAB will be discussed.   

\subsection{Matrix forms}\label{MatSKfg}
This section presents the matrix form of the convolutional finite Fourier series 
which developed in Section \ref{CNV-FFS}. 

Let $f,\rho:SE(2)\to\mathbb{C}$ be functions supported in $\mathbb{D}_{1/4}$ such that $\rho$ is radial in translations. Assume $\mathbf{K}:=(K_\mathrm{x},K_\mathrm{y},K_\mathrm{t})\in\mathbb{N}^3$ and $\mathbf{L}:=2\mathbf{K}+1=(L_\mathrm{x},L_\mathrm{y},L_\mathrm{r})$. Let $\mathbf{F},\mathbf{P}\in\mathbb{C}^{\mathbf{L}}$ are given by sampling of $f$ (resp. $\rho$) on the grid $\mathbf{\Omega}_\mathbf{L}$.
Suppose $(x,y,\theta)\in \Omega$. Then using (\ref{FkLv}) we get  
\begin{equation}\label{fgalt}
S_\mathbf{K}[f,\rho](x,y,\theta)=\frac{1}{(L_\mathrm{x}L
_\mathrm{y}L_\mathrm{r})^2}\sum_{k_1=-K_\x}^{K_\x}\sum_{k_2=-K_\y}^{K_\y}\sum_{k_3=-K_\rt}^{K_\rt}\mathbf{H}(\tau_{L_\x}(k_1)+1,\tau_{L_\y}(k_2)+1,\tau_{L_\rt}(k_3)+1)e^{2\pi\ii (k_1x+k_2y)}e^{\ii k_3\theta},
\end{equation}
where $\mathbf{H}:=\widehat{\mathbf{F}}\odot\widehat{\mathbf{P}}$, $\widehat{\mathbf{F}}$ (resp. $\widehat{\mathbf{P}}$) 
is the DFT of $\mathbf{F}$ (resp. $\mathbf{P}$) given by (\ref{3DFT}). In details, (\ref{SKfg}) implies  
\begin{align*}
&S_\mathbf{K}[f,\rho](x,y,\theta)=\sum_{|\mathbf{k}|\le\mathbf{K}}\widehat{f}[\mathbf{k};\mathbf{L}]\widehat{\rho}[\mathbf{k};\mathbf{L}]\phi_{\mathbf{k}}(x,y,\theta)
\\&=\frac{1}{L_\mathrm{x}L
_\mathrm{y}L_\mathrm{r}}\sum_{k_1=-K_\x}^{K_\x}\sum_{k_2=-K_\y}^{K_\y}\sum_{k_3=-K_\rt}^{K_\rt}(-1)^{k_1+k_2}\widehat{\mathbf{F}}(\tau_{L_\x}(k_1)+1,\tau_{L_\y}(k_2)+1,\tau_{L_\rt}(k_3)+1)\widehat{\rho}[\mathbf{k};\mathbf{L}]e^{2\pi\ii (k_1x+k_2y)}e^{\ii k_3\theta}
\\&=\frac{1}{(L_\mathrm{x}L
_\mathrm{y}L_\mathrm{r})^2}\sum_{k_1=-K_\x}^{K_\x}\sum_{k_2=-K_\y}^{K_\y}\sum_{k_3=-K_\rt}^{K_\rt}\widehat{\mathbf{F}}\odot\widehat{\mathbf{P}}(\tau_{L_\x}(k_1)+1,\tau_{L_\y}(k_2)+1,\tau_{L_\rt}(k_3)+1)e^{2\pi\ii (k_1x+k_2y)}e^{\ii k_3\theta}.
\end{align*}
Let $\mathbf{N}:=(N_\x,N_\y,N_\rt)\in\mathbb{N}^3$ with $\mathbf{N}\ge\mathbf{L}$. 
Suppose that $x_n':=\frac{-1}{2}+\frac{(n-1)}{N_\x}$, $y_m':=\frac{-1}{2}+\frac{(m-1)}{N_\y}$, and $\theta_l'=\frac{2\pi(l-1)}{N_\rt}$ for every $1\le n\le N_\x$, $1\le m\le N_\y$, and $1\le l\le N_\rt$. Then 
\begin{equation}\label{MatSKfgonG}
S_{\mathbf{K}}[f,\rho](x_n',y_m',\theta_l')=\frac{N_\x N_\y N_\rt}{(L_\x L_\y L_\rt)^2}\mathrm{iDFT}(\mathbf{W}\odot\mathbf{Q}_{f}\odot\mathbf{Q}_\rho)(n,m,l),
\end{equation}
where iDFT is given by (\ref{3iDFT}), $\mathbf{Q}_{f},\mathbf{Q}_\rho\in\mathbb{C}^{\mathbf{N}}$ are given by (\ref{Qf}), and $\mathbf{W}\in\mathbb{C}^{\mathbf{N}}$ is given by 
\begin{equation}\label{W}
\mathbf{W}(n_\x,n_\y,n_\rt):=\left\{\begin{array}{lllllll}
(-1)^{n_\x+n_\y} & {\rm if}\ \ \ (n_\x,n_\y,n_\rt)\in\mathbb{I}_\x\times \mathbb{I}_\y\times\mathbb{I}_\rt\\
(-1)^{n_\x+n_\y} & {\rm if}\ \ \ (n_\x,n_\y,n_\rt)\in\mathbb{I}_\x\times \mathbb{I}_\y\times\mathbb{J}_\rt\\
(-1)^{n_\x+n_\y-N_\y} & {\rm if}\ \ \ (n_\x,n_\y,n_\rt)\in\mathbb{I}_\x\times \mathbb{J}_\y\times\mathbb{J}_\rt\\
(-1)^{n_\x+n_\y-N_\y} & {\rm if}\ \ \ (n_\x,n_\y,n_\rt)\in\mathbb{I}_\x\times \mathbb{J}_\y\times\mathbb{I}_\rt\\
(-1)^{n_\x+n_\y-N_\x-N_\y} & {\rm if}\ \ \ (n_\x,n_\y,n_\rt)\in\mathbb{J}_\x\times \mathbb{J}_\y\times\mathbb{J}_\rt\\
(-1)^{n_\x+n_\y-N_\x} & {\rm if}\ \ \ (n_\x,n_\y,n_\rt)\in\mathbb{J}_\x\times \mathbb{I}_\y\times\mathbb{J}_\rt\\
(-1)^{n_\x+n_\y-N_\x-N_\y} & {\rm if}\ \ \ (n_\x,n_\y,n_\rt)\in\mathbb{J}_\x\times \mathbb{J}_\y\times\mathbb{I}_\rt\\
(-1)^{n_\x+n_\y-N_\x} & {\rm if}\ \ \ (n_\x,n_\y,n_\rt)\in\mathbb{J}_\x\times \mathbb{I}_\y\times\mathbb{I}_\rt\\
0 & {\rm otherwise}
\end{array}
\right..
\end{equation}
Indeed, applying (\ref{fgalt}), Corollary \ref{LtoN1D-1} for $N_\x,N_\y$ and 
 Corollary \ref{LtoN1D+1} for $N_\rt$, we achieve 
\begin{align*}
&(L_\mathrm{x}L
_\mathrm{y}L_\mathrm{r})^2S_\mathbf{K}[f,\rho](x_n',y_m',\theta_l')
\\&=\sum_{k_1=-K_\x}^{K_\x}\sum_{k_2=-K_\y}^{K_\y}\sum_{k_3=-K_\rt}^{K_\rt}(-1)^{k_1+k_2}\mathbf{H}(\tau_{L_\x}(k_1)+1,\tau_{L_\y}(k_2)+1,\tau_{L_\rt}(k_3)+1)e^{\frac{2\pi\ii k_1(n-1)}{N_\x}}e^{\frac{2\pi\ii k_2(m-1)}{N_\y}}e^{\frac{2\pi\ii k_3(l-1)}{N_\rt}}
\\&=\sum_{n_\x=1}^{N_\x}\sum_{n_\y=1}^{N_\y}\sum_{n_\rt=1}^{N_\rt}\mathbf{W}(n_\x,n_\y,n_\rt)\mathbf{Q}_f(n_\x,n_\y,n_\rt)\mathbf{Q}_\rho(n_\x,n_\y,n_\rt)e^{\frac{2\pi\ii(n_\x-1)(n-1)}{N_\x}}e^{\frac{2\pi\ii (n_\y-1)(m-1)}{N_\y}}e^{\frac{2\pi\ii(n_\rt-1)(l-1)}{N_\rt}}
\\&=N_\x N_\y N_\rt\mathrm{iDFT}(\mathbf{W}\odot\mathbf{Q}_f\odot\mathbf{Q}_\rho)(n,m,l),
\end{align*}
implying (\ref{MatSKfgonG}).
In particular, if $\mathbf{N}=\mathbf{L}$ then (\ref{MatSKfgonG}) reduces to the following closed matrix form 
\[
S_\mathbf{K}[f,\rho]=\frac{1}{L_\x L_\y L_\rt}\mathrm{iDFT}(\mathbf{W}\odot\widehat{\mathbf{F}}\odot\widehat{\mathbf{P}}).
\]
Then we discuss Algorithm \ref{SKfgAlg} using regular sampling on $\Omega$ to approximate the finite convolutional Fourier series $S_{\mathbf{K}}[f,\rho]$ for functions $f,\rho:SE(2)\to\mathbb{C}$ supported in $\mathbb{D}_{1/2}$, with $\rho$ radial in translations.
\begin{algorithm}[H]
\caption{Computing numerical approximation of the $S_\mathbf{K}(f\star \rho)$ on $\mathbf{N}$-grid using FFT} 
\begin{algorithmic}[1]
\State{\bf input data}

Given functions $f,\rho:SE(2)\to\mathbb{C}$ supported in $\mathbb{D}_{1/2}$ with $\rho$ radial in translations,

 approximation order $\mathbf{K}\in\mathbb{N}^3$, 
 
configuration size $\mathbf{N}\in\mathbb{N}^3$ such that $2\mathbf{K}+1\le\mathbf{N}$. 

\State {\bf output result} 

Values of the convolutional finite Fourier series $S_\mathbf{K}[f,\rho]$ on configuration $\mathbf{N}$-grid \\ 
Put $\mathbf{L}:=(L_\x,L_\y,L_\rt)^T=2\mathbf{K}+1$\\
Generate the fundamental sampling grid $\mathbf{\Omega}_\mathbf{L}:=\{(x_i,y_j,\theta_l):\mathbf{1}\le(i,j,l)\le\mathbf{L}\}$ according to (\ref{OmL}).\\
Generate $\mathbf{F},\mathbf{P}\in\mathbb{C}^\mathbf{L}$ by sampling of $f,\rho$ on $\mathbf{\Omega}_\mathbf{L}$.\\
Compute the DFT arrays $\widehat{\mathbf{F}},\widehat{\mathbf{P}}\in\mathbb{C}^{\mathbf{L}}$ according to (\ref{3DFT}) using FFT\\
Generate the arrays $\mathbf{Q}_{f},\mathbf{Q}_{\rho}\in\mathbb{C}^{\mathbf{N}}$ associated to the configuration $\mathbf{N}$-grid according to (\ref{Qf}).\\
Load/Generate the weight array $\mathbf{W}\in\mathbb{C}^{\mathbf{N}}$ according to (\ref{W}).\\
Compute the Hadamard product  $\mathbf{W}\odot \mathbf{Q}_f\odot\mathbf{Q}_\rho$.
\State Compute $S_{\mathbf{K}}[f,\rho]$ according to (\ref{MatSKfgonG}) using FFT  \Comment{$S_{\mathbf{K}}[f,\rho]$ is approximation of $f\star\rho$}
\end{algorithmic} 
\label{SKfgAlg}
\end{algorithm}

\subsection{Numerical Convolutions}\label{NE.CNV}
We then continue by some numerical experiments in MATLAB for approximating $SE(2)$-convolutions with functions which are radial in translations.

This part is dedicated to illustrate  some numerical experiments related to convolutional finite Fourier series on $\Omega$, using the discussed matrix approach in Section \ref{MatSKfg}. We here implement some experiments in MATLAB for convolution of functions on $SE(2)$ which are approximately supported in $\Omega$ with functions which are radial in translations. 

\begin{example}
Let $\mathbf{H}:=\mathrm{diag}(0.03,0.01)^{-1}$, $\mathbf{U}:=\kappa^{-1}\mathbf{I}_2$, with $\kappa:=0.02$. Suppose $s:=0.01$, and $\nu:=\frac{\pi}{2}$. Let $f:=f_{\mathbf{H}}^{\nu,s}$ and $\rho:=f_{\mathbf{U}}^{\nu,s}$, where $f_{\mathbf{H}}^{\nu,s},f_{\mathbf{U}}^{\nu,s}$ are the 3D deformed Gaussians given by (\ref{fHsnu}). 
Then $f,\rho$ are approximately supported in $\mathbb{D}_{1/4}\subset \Omega$.
So, by applying  Proposition 3.1 of \cite{AGHF.GSC.JAT}, we conclude that the convolution $f\star \rho$ is approximately supported in $\mathbb{D}_{1/2}\subset\Omega$. 

The naive computational strategy to approximate the value of $(f\star \rho)(\mathbf{h})$, can be implemented by approximating the integral of $f\rho_\mathbf{h}$ on $\Omega$ using numerical integration, where  $\rho_{\bf h}(\mathbf{g}):=\rho(\mathbf{g}^{-1}\circ\mathbf{h})$ for $\mathbf{h},\mathbf{g}\in SE(2)$. However, the function $\rho$ is radial in translations. 
Because, we have 
\[
\rho(\mathbf{x},\mathbf{R}_\theta)=e^{-\kappa^{-1}\mathbf{x}^T\mathbf{I}_2\mathbf{x}}e^{-\frac{(\theta-\nu)^2}{s'}}=e^{-\frac{\|\mathbf{x}\|^2_2}{\kappa}}e^{-\frac{(\theta-\nu)^2}{s'}},
\]
for every $(\mathbf{x},\mathbf{R}_\theta)\in SE(2)$. This implies that  
\[
\rho(\mathbf{Sx},\mathbf{R}_\theta)=e^{-\frac{\|\mathbf{Sx}\|^2_2}{\kappa}}e^{-\frac{(\theta-\nu)^2}{s'}}=e^{-\frac{\|\mathbf{x}\|^2_2}{\kappa}}e^{-\frac{(\theta-\nu)^2}{s'}}=\rho(\mathbf{x},\mathbf{R}_\theta),
\]
for every $(\mathbf{x},\mathbf{R}_\theta)\in SE(2)$ and $\mathbf{S}\in SO(2)$.
So, the convolution $f\star \rho$ can be approximated using the convolutional finite Fourier  series $S_\mathbf{K}[f,\rho]$, if $\|\mathbf{K}\|_{\infty}$ is sufficiently large.

The following numerical experiment compares approximations of $f\star \rho$ using $T_{\mathbf{L}}(f\rho_{(\cdot)})$ versus $\Re(S_{\mathbf{K}}[f,\rho])$ on $\Omega$, where $\mathbf{L}:=2\mathbf{K}+1$ and $T_\mathbf{L}(\cdot)$ is the trapezoidal numerical integration on the sampling grid $\mathbf{\Omega}_\mathbf{L}$. 
\begin{figure}[H]
\centering
\advance\leftskip-2.7cm
\includegraphics[keepaspectratio=true,width=1.3\textwidth, height=\textheight]{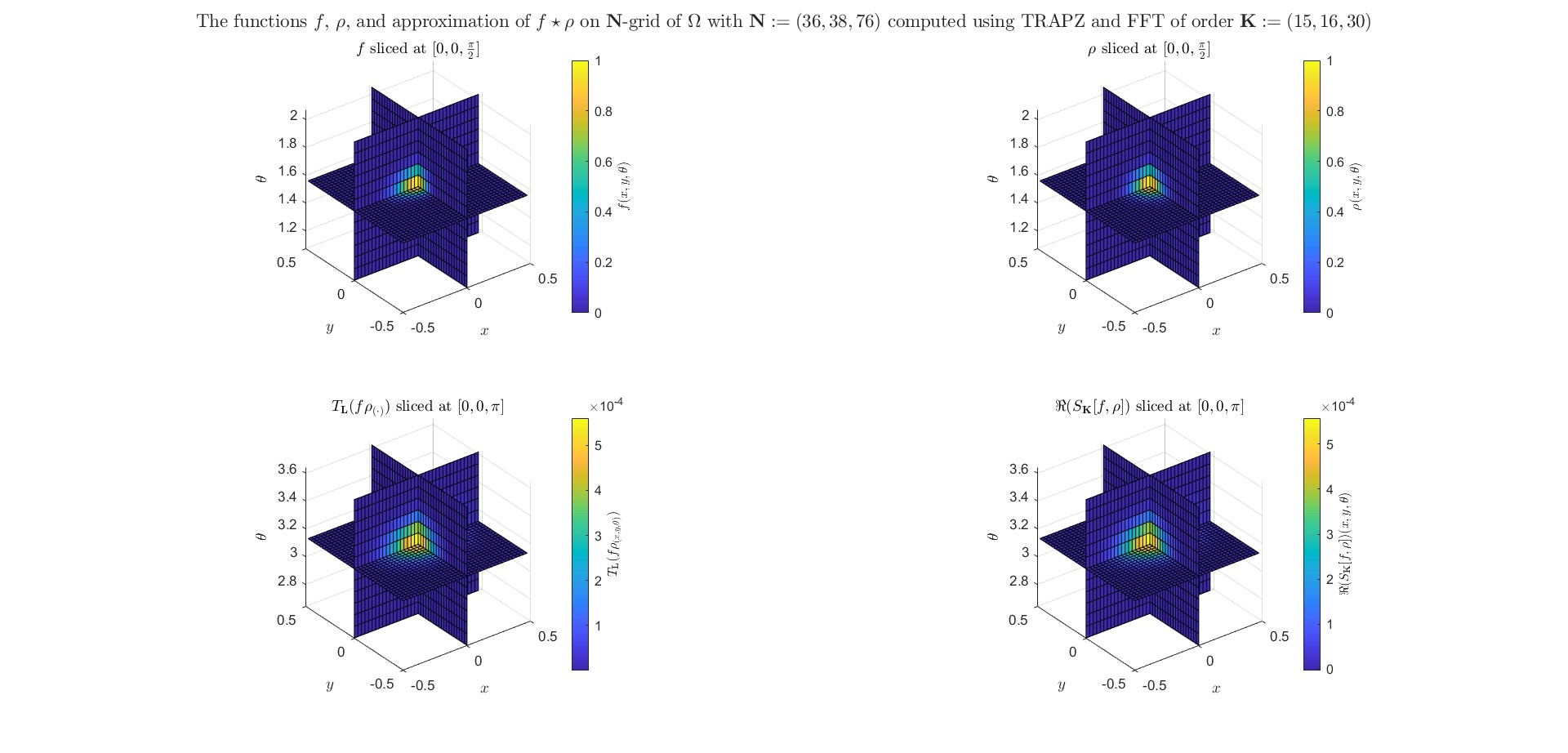}
\caption{The slice plots of $f$, $\rho$,  $T_{\mathbf{L}}(f\rho_{(\cdot)})$, and $\Re(S_{\mathbf{K}}[f,\rho])$ on the uniform grid of size $36\times 38\times 76$ of $\Omega$, with $\mathbf{K}:=(15,16,30)$ and $\mathbf{L}:=2\mathbf{K}+1$.  The functions  $f$, $\rho$ are sliced at $[0,0,\frac{\pi}{2}]$ and the functions $T_{\mathbf{L}}(f\rho_{(\cdot)})$, $\Re(S_{\mathbf{K}}[f,\rho])$ are sliced at $[0,0,\pi]$.}
\label{fig:SliccnvSE2Gdiag}
\end{figure}
\begin{figure}[H]
\centering
\advance\leftskip-2.7cm
\includegraphics[keepaspectratio=true,width=1.3\textwidth, height=\textheight]{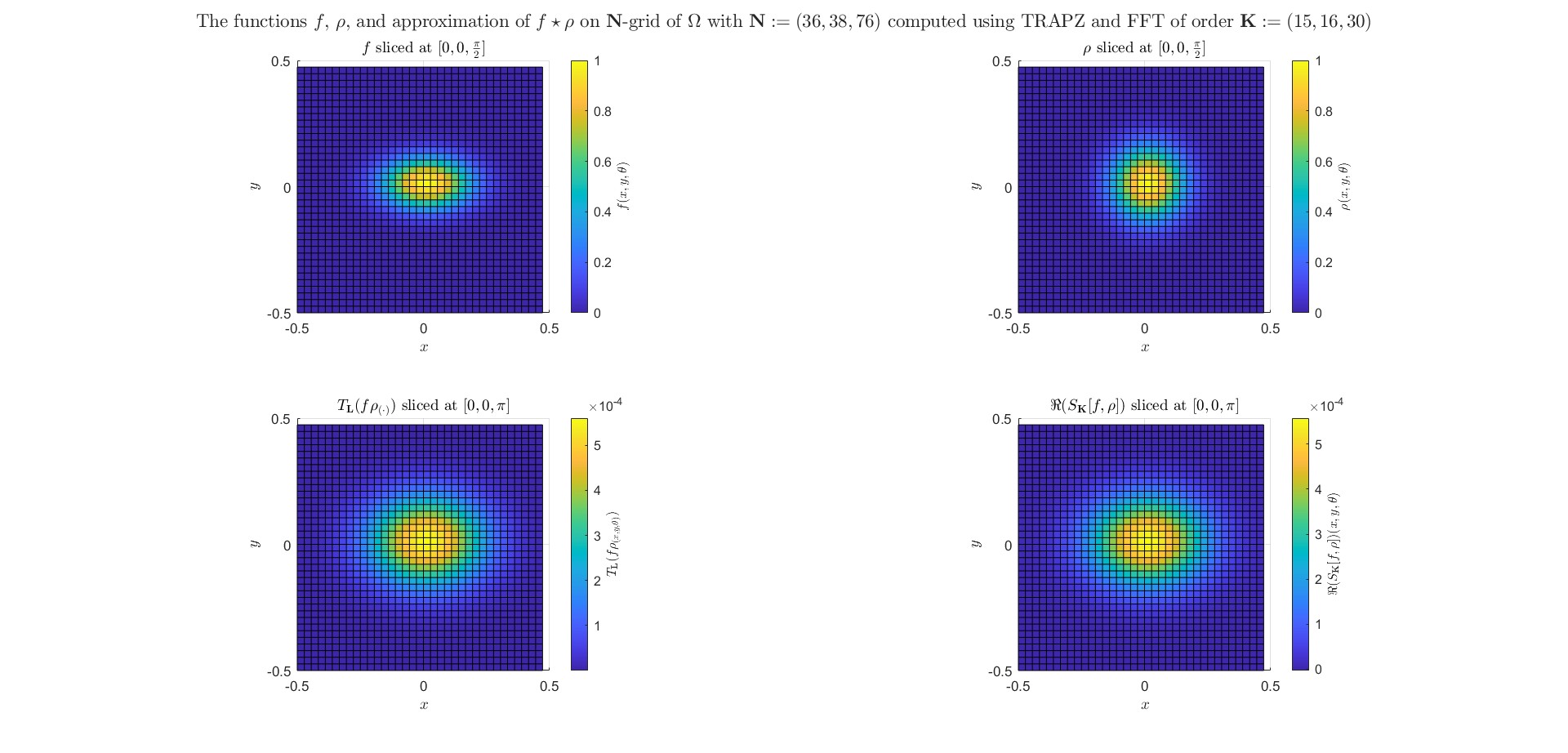}
\caption{The contour plots of $f$, $\rho$, $T_{\mathbf{L}}(f\rho_{(\cdot)})$, and $\Re(S_{\mathbf{K}}[f,\rho])$ on the uniform grid of size $36\times 38\times 76$ of $\Omega$, with $\mathbf{K}:=(15,16,30)$. The functions  $f$, $\rho$ are sliced $[0,0,\frac{\pi}{2}]$ and the functions $T_{\mathbf{L}}(f\rho_{(\cdot)})$, $\Re(S_{\mathbf{K}}[f,\rho])$ are sliced at $[0,0,\pi]$.}
\label{fig:ContcnvSE2Gdiag}
\end{figure}
The actual runtime for computing  $T_{\mathbf{L}}(f\rho_{(\cdot)})$ on the uniform grid of size $36\times 38\times 76$ of $\Omega$ was $\SI{6.8638e+03
}{\second}$ and for computing $\Re(S_{\mathbf{K}}[f,\rho])$ on the uniform grid of size $36\times 38\times 76$ of $\Omega$ was $\SI{0.0849}{\second}$.
\end{example}


\subsection{Numerical Multiple Convolutions}
We then conclude this section by some numerical experiments in MATLAB for approximating multiple $SE(2)$-convolutions with functions which are radial in translations.

Let $\rho\in L^1(SE(2))$ be a function which is radial in translations and $q\in\mathbb{N}$. Suppose $\rho^{(q)}:SE(2)\to\mathbb{C}$ is given by
\begin{equation}\label{rq}
\rho^{(q+1)}:=\rho^{(q)}\star \rho,
\end{equation}
where $\star$ is the $SE(2)$-convolution, and $\rho^{(1)}:=\rho$.

In some applications \cite{PI5}, approximation of convolutions of the following form appears
\begin{equation}\label{Vq}
V_q(f,\rho):=f\star \rho^{(q)},
\end{equation}
where $q\in\mathbb{N}$ is given, $f$ is arbitrary, and $\rho$ is the multidimensional Gaussian on $SE(2)$ given by 
 \begin{equation}\label{gbS0}
\rho(\mathbf{x},\mathbf{R}):=\exp\left(\frac{-\|{\log(\mathbf{x},\mathbf{R})^\lor}\|_2^2}{2\sigma^2}\right),
\end{equation}
for every $(\mathbf{x},\mathbf{R})\in SE(2)$ and $\sigma\in\mathbb{R}$.

Assume that $f,\rho:SE(2)\to\mathbb{C}$ are (approximately) supported in $\mathbb{D}_{1/2(q+1)}$. Then $f\star \rho^{(q)}$ is supported in $\mathbb{D}_{1/2}\subset\Omega$. Suppose the goal is computing efficient approximation of $f\star \rho^{(q)}$ for a given $q\in\mathbb{N}$. Since $\rho$ is radial in translations, $\rho^{(q)}$ is radial in translations. Applying Remark \ref{SKfgnote}, the convolutional finite Fourier series $S_\mathbf{K}[f,\rho^{(q)}]$ given by
\begin{equation}\label{SKfgq}
S_\mathbf{K}[f,\rho^{(q)}](\omega)=\sum_{|\mathbf{k}|\le\mathbf{K}}\widehat{f}[\mathbf{k};2\mathbf{K}+1]\widehat{\rho^{(q)}}[\mathbf{k};2\mathbf{K}+1]\phi_{\mathbf{k}}(\omega),\hspace{0.5cm}{\rm for}\ \omega\in\Omega,
\end{equation}
is an efficient numerical approximation of $f\star \rho^{(q)}$, if $\|\mathbf{K}\|_\infty$ is sufficiency large. 

However, the later requires values of $\rho^{(q)}$ on the sampling grid  which makes $S_\mathbf{K}[f,\rho^{(q)}]$ computationally expensive, if $q>1$. 
Since $\rho$ is radial in translations, Theorem \ref{f*gP}(2) implies  
\[
\widehat{\rho^{(q)}}[\mathbf{k}]=\widehat{\rho}[\mathbf{k}]^q,
\]
for every $\mathbf{k}\in\mathbb{I}$.  So, by Theorem \ref{fgkL=fkLgkL}, $\widehat{\rho^{(q)}}[\mathbf{k};2\mathbf{K}+1]$ can be approximated by  $\widehat{\rho}[\mathbf{k};2\mathbf{K}+1]^q$. 
Therefore, $S_\mathbf{K}^q[f,\rho]$ defined by 
\begin{equation}\label{SqKfg}
S_\mathbf{K}^q[f,\rho](\omega):=\sum_{|\mathbf{k}|\le\mathbf{K}}\widehat{f}[\mathbf{k};2\mathbf{K}+1]\widehat{\rho}[\mathbf{k};2\mathbf{K}+1]^q\phi_\mathbf{k}(\omega),\hspace{1cm}{\rm for}\hspace{0.2cm}\omega\in\Omega,
\end{equation}
can be considered as an accurate approximation of the convolution $f\star \rho^{(q)}$ if $\|\mathbf{K}\|_\infty$ is sufficiently large. 
\subsubsection{\bf Matrix forms}
Next, we develop a matrix form for $S_\mathbf{K}^q[f,\rho]$ given by (\ref{SqKfg}). Suppose $q\in\mathbb{N}$ and $f,\rho\in L^1(SE(2))$ are supported in $\mathbb{D}_{\frac{1}{2(q+1)}}$. Let $\mathbf{k}:=(k_1,k_2,k_3)^T\in\mathbb{I}$, $\mathbf{K}\in\mathbb{N}^3$ and $\mathbf{L}:=2\mathbf{K}+1$.
Then applying (\ref{FkLv}), we get 
\begin{equation}
\widehat{\rho}[\mathbf{k};\mathbf{L}]^q=\frac{((-1)^{k_1+k_2})^q}{(L_\mathrm{x}L_\mathrm{y}L_\mathrm{r})^q}\widehat{\mathbf{P}}(\tau_{L_\x}(k_1)+1,\tau_{L_\y}(k_2)+1,\tau_{L_\rt}(k_3)+1)^q.
\end{equation}
Let $\mathbf{N}:=(N_\x,N_\y,N_\rt)\in\mathbb{N}^3$ with $\mathbf{N}\ge\mathbf{L}$. 
Suppose $x_n':=\frac{-1}{2}+\frac{(n-1)}{N_\x}$, $y_m':=\frac{-1}{2}+\frac{(m-1)}{N_\y}$, and $\theta_l'=\frac{2\pi(l-1)}{N_\rt}$ for every $1\le n\le N_\x$, $1\le m\le N_\y$, and $1\le l\le N_\rt$. Then 
\begin{equation}\label{MatSqKfgG}
S_{\mathbf{K}}^q[f,\rho](x_n',y_m',\theta_l')=\frac{N_\x N_\y N_\rt}{(L_\x L_\y L_\rt)^{q+1}}\mathrm{iDFT}(\mathbf{W}^{\odot q}\odot\mathbf{Q}_{f}\odot\mathbf{Q}_\rho^{\odot q})(n,m,l),
\end{equation}
where iDFT is given by (\ref{3iDFT}), $\mathbf{W}\in\mathbb{C}^{\mathbf{N}}$ is given by (\ref{W}), $\mathbf{Q}_{f},\mathbf{Q}_\rho\in\mathbb{C}^{\mathbf{N}}$ are given by (\ref{Qf}), and $^{\odot q}$ is $q$-th order Hadamard power.

The Algorithm \ref{SKfgpAlg} discusses finite multiple convolutional Fourier series $S_{\mathbf{K}}^q[f,\rho]$ as an approximation of $f\star \rho^{(q)}$, where functions $f,\rho:SE(2)\to\mathbb{C}$ are supported in $\mathbb{D}_{1/2(q+1)}$ and $\rho$ is radial in translations. 
\begin{algorithm}[H]
\caption{Computing numerical approximation of the $S_\mathbf{K}(f\star \rho^{(q)})$ on $\mathbf{N}$-grid using FFT} 
\begin{algorithmic}[1]
\State{\bf input data} 

Given $q\in\mathbb{N}$, 

functions $f,\rho:SE(2)\to\mathbb{C}$ (approximately) supported in $\mathbb{D}_{1/2(q+1)}$ with $\rho$ radial in translations,

 approximation order $\mathbf{K}\in\mathbb{N}^3$, 
 
 configuration size $\mathbf{N}\in\mathbb{N}^3$.
\State {\bf output result} 

Approximated values of $S_\mathbf{K}^q[f,\rho]$ on configuration $\mathbf{N}$-grid\\ 
Put $\mathbf{L}:=(L_\x,L_\y,L_\rt)^T=2\mathbf{K}+1$ and $(N_\x,N_\y,N_\rt):=\mathbf{N}$\\
Generate the fundamental sampling grid $\mathbf{\Omega}_\mathbf{L}:=\{(x_i,y_j,\theta_l):\mathbf{1}\le (i,j,l)\le\mathbf{L}\}$\\
Generate $\mathbf{F},\mathbf{P}\in\mathbb{C}^\mathbf{L}$ using sampling of $f$ (resp. $\rho$) on $\mathbf{\Omega}_\mathbf{L}$\\
Compute $\widehat{\mathbf{F}},\widehat{\mathbf{P}}\in\mathbb{C}^{\mathbf{L}}$ according to (\ref{3DFT}) using FFT\\
Generate $\mathbf{Q}_{f},\mathbf{Q}_{\rho}\in\mathbb{C}^{\mathbf{N}}$ associated to the configuration $\mathbf{N}$-grid according to (\ref{Qf}).\\
Compute the  $q$-th order Hadamard power $\mathbf{Q}_\rho^{[q]}:=\mathbf{Q}_\rho^{\odot q}$.\\
Compute the Hadamard product  $\mathbf{H}_{f,\rho}^{[q]}:=\mathbf{Q}_f\odot\mathbf{Q}_\rho^{[q]}$.
\If{$q$ is odd}
\State Load/Generate $\mathbf{W}\in\mathbb{C}^{\mathbf{N}}$ and compute the Hadamard product  $\mathbf{W}\odot \mathbf{H}_{f,\rho}^{[q]}$.
\State Compute $S_{\mathbf{K}}^{q}[f,\rho]:=\frac{N_\x N_\y N_\rt}{(L_\x L_\y L_\rt)^{q+1}}\mathrm{iDFT}\left(\mathbf{W}\odot \mathbf{H}_{f,\rho}^{[q]}\right)$ using FFT 
\Else
\State Compute $S_{\mathbf{K}}^{q}[f,\rho]:=\frac{N_\x N_\y N_\rt}{(L_\x L_\y L_\rt)^{q+1}}\mathrm{iDFT}\left(\mathbf{H}_{f,\rho}^{[q]}\right)$ using FFT  
\EndIf \Comment{$S_{\mathbf{K}}^q[f,\rho]$ is approximation of $V_q(f,\rho)$}
\end{algorithmic} 
\label{SKfgpAlg}
\end{algorithm}
\newpage
\begin{example}
Let $\mathbf{H}:=\mathrm{diag}(0.025,0.0025)^{-1}$, $s:=0.0125$, and $\nu:=\frac{\pi}{4}$.
Suppose that $f=f_{\mathbf{H}}^{\nu,s}$ is the associated 3D deformed Gaussian given by (\ref{fHsnu}).  Assume that $\sigma^2:=0.005$ and $\rho$ be the associated multidimensional Gaussian given by (\ref{gbS0}) which is $\frac{\pi}{4}$ shifted in rotations direction. 
Then $f,\rho$ are approximately supported in $\mathbb{D}_{\frac{1}{8}}$. So, the $SE(2)$-convolution $f\star \rho^{(3)}$ is approximately supported in $\mathbb{D}_{\frac{1}{2}}$.
Therefore, the $SE(2)$-convolution $f\star \rho^{(3)}$ can be approximated using the convolutional finite Fourier  series $S_\mathbf{K}^3[f,\rho]$.
\begin{figure}[H]
\centering
\includegraphics[keepaspectratio=true,width=\textwidth, height=\textheight]{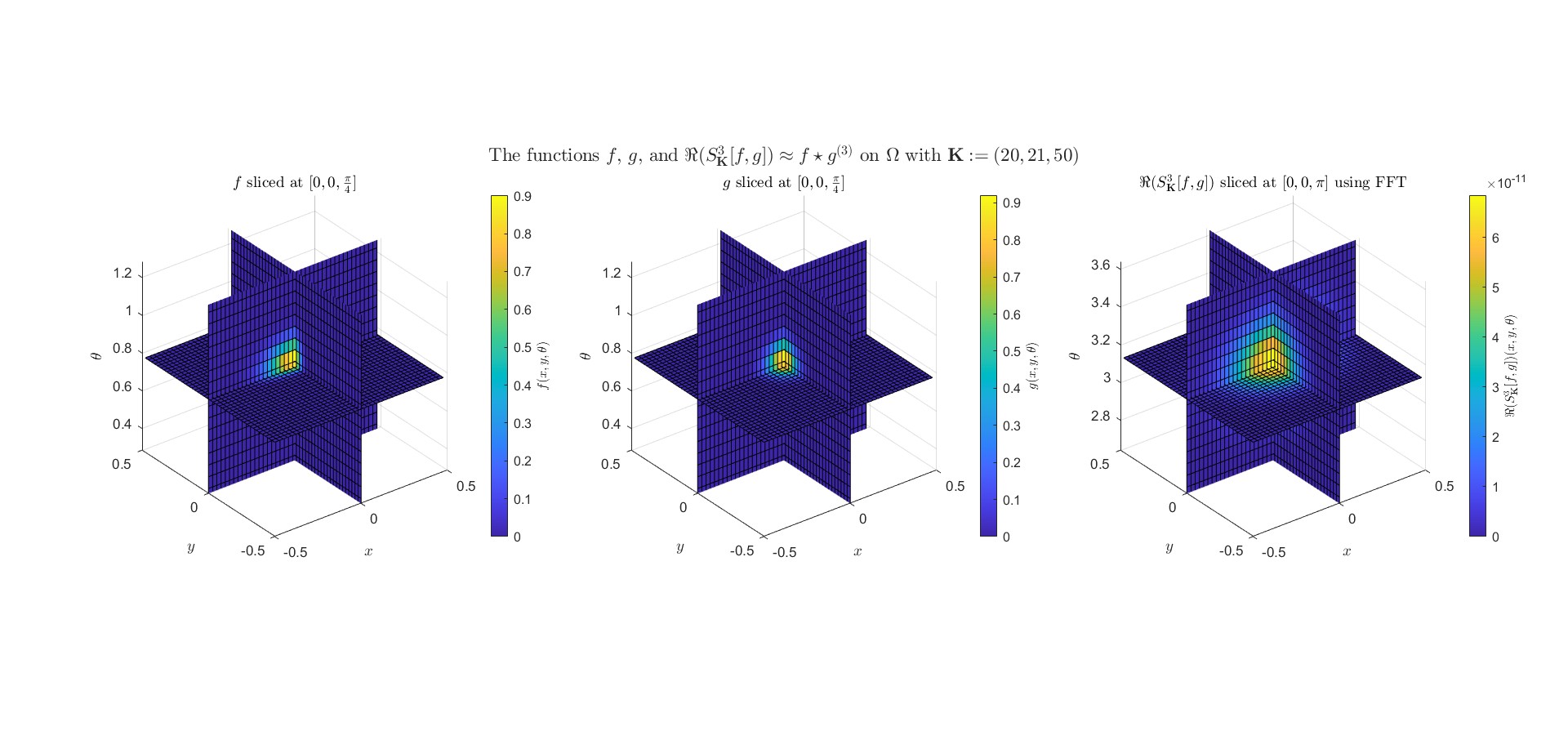}
\caption{The slice plot of $f:=f_{\mathbf{H}}^{\nu,s}$, $g=\rho$ and $\Re(S_{\mathbf{K}}^3[f,\rho])$ on the uniform grid of size $41\times 43\times 151$ of the fundamental domain $\Omega$, with $\mathbf{K}:=(20,21,50)$. The functions  $f$, $\rho$ are sliced $[0,0,\frac{\pi}{4}]$ and  $\Re(S_{\mathbf{K}}^3[f,\rho])$ sliced at $[0,0,\pi]$.}
\label{fig:SlicCNVfg3}
\end{figure}
\begin{figure}[H]
\centering
\includegraphics[keepaspectratio=true,width=\textwidth, height=\textheight]{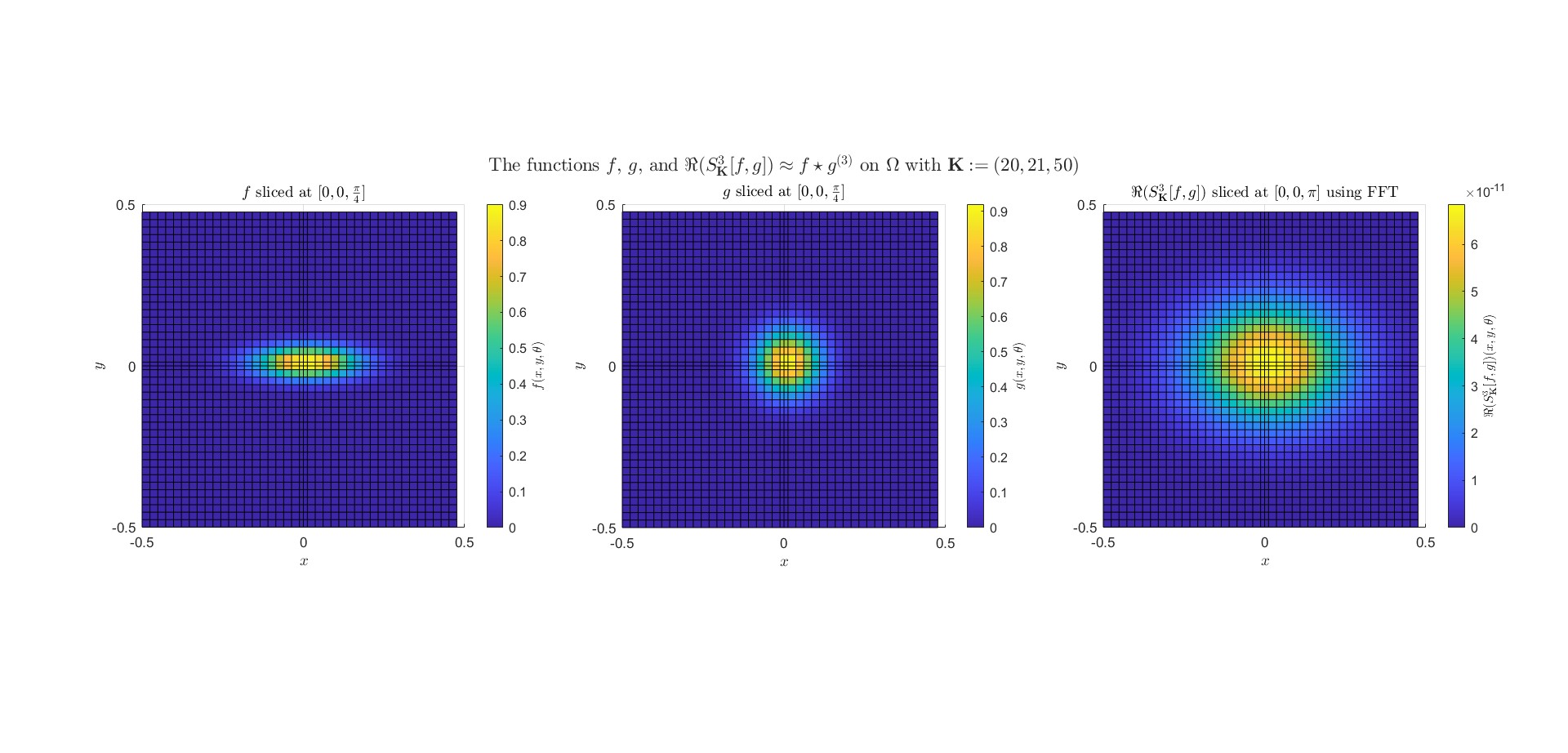}
\caption{The contour plot of $f$, $g=\rho$ and $\Re(S_{\mathbf{K}}^3[f,\rho])$ on the uniform grid of size $41\times 43\times 151$ of the fundamental domain $\Omega$, with $\mathbf{K}:=(20,21,50)$. The functions  $f$, $\rho$ are sliced $[0,0,\frac{\pi}{4}]$ and  $\Re(S_{\mathbf{K}}^3[f,\rho])$ sliced at $[0,0,\pi]$.}
\label{fig:ContCNVfg3}
\end{figure}

\end{example}

\newpage
In some practical applications in Robotics, if $q\in\mathbb{N}$ is given, the ultimate goal is not just computing efficient approximation of the convolution $f\star \rho^{(q)}$, but also  computing approximated values of the sequence of all convolutions $f\star \rho^{(p)}$ at each time $1\le p\le q$. 

The straightforward solution to this problem is computing each $SE(2)$-convolution $f\star \rho^{(p)}$ at each $1\le p\le q$, using the following algorithm.

\begin{algorithm}[H]
\caption{Naive computation of all approximations of $f\star \rho^{(p)}$ ($1\le p\le q$)} 
\begin{algorithmic}[1]
\State{\bf input data} Given $q\in\mathbb{N}$, $f,\rho:SE(2)\to\mathbb{C}$ supported in $\frac{1}{2(q+1)}$, with $\rho$ radial in translations, and approximation order $\mathbf{K}\in\mathbb{N}^3$.
\State {\bf output result} Approximation of $f\star \rho^{(p)}$ for every $1\le p\le q$.
\For{$p=1:q$}
\State Compute $V_p:=S_\mathbf{K}^p[f,\rho]$ using Algorithm \ref{SKfgpAlg}.
\State Return $V_p$. \Comment{$V_p$ is the approximation of $f\star \rho^{(p)}$.}
\EndFor
\end{algorithmic} 
\label{SKVqfAlg}
\end{algorithm}

The above algorithm is time consuming when $q$ is large, which is the case in practical experiments. Therefore, it is not computationally attractive for large $q$. To improve this, we can consider a recursive approach to compute approximation of all convolutions recursively to reach the final convolution $f\star \rho^{(q)}$. 

To this end, applying associativity of the  $SE(2)$-convolution, we have 
\begin{align*}
V_{p+1}(f,\rho)&=f\star \rho^{(p+1)}=f\star(\rho^{(p)}\star \rho)
\\&=(f\star \rho^{(p)})\star \rho=V_p(f,\rho)\star \rho\,,
\end{align*}
which implies that 
\begin{equation}\label{Vq1q}
V_{p+1}(f,\rho)=V_p(f,\rho)\star \rho.
\end{equation}
Suppose $V_p^\mathbf{K}[f,\rho]$ is an accurate numerical approximation of $V_p(f,\rho)$, for $\mathbf{K}\in\mathbb{N}^3$ with sufficiently large $\|\mathbf{K}\|_\infty$.
Since $\rho$ is radial in translations, using (\ref{Vq1q}), the convolutional finite Fourier  series $S_\mathbf{K}[V_p^\mathbf{K}[f,\rho],\rho]$ can be used as an efficient numerical approximation of $V_{p+1}(f,\rho)$. This approach requires approximated values of $V_{p}^\mathbf{K}[f,\rho]$ which can be constructed and computed recursively. Assume that $V_p^\mathbf{K}[f,\rho]$ is given recursively by sequence of numerical approximations 
\[
V_{p+1}^{\mathbf{K}}[f,\rho]:=S_\mathbf{K}[V_p^{\mathbf{K}}[f,\rho],\rho],
\]
with $V_1^\mathbf{K}[f,\rho]:=S_\mathbf{K}[f,\rho]$.

The Algorithm \ref{SKVqfgNaiveRecAlg} applies recursive approach to approximate all the convolutions of the form $V_p(f,\rho)$, given by (\ref{Vq}), using the convolutional finite Fourier series of convolutions for a function $f,\rho:SE(2)\to\mathbb{C}$ supported in $\mathbb{D}_{1/2(q+1)}$, with $\rho$ radial in translations.

\begin{algorithm}[H]
\caption{Naive recursive computation of all approximations of $V_p(f,g)$ ($1\le p\le q$)} 
\begin{algorithmic}[1]
\State{\bf input data} Given $q\in\mathbb{N}$, functions $f,\rho:SE(2)\to\mathbb{C}$ supported in $\frac{1}{2(q+1)}$ with $\rho$ radial in translations, and approximation order $\mathbf{K}\in\mathbb{N}^3$.
\State {\bf output result} Approximated values of $V_p(f,\rho)$ on the $\mathbf{L}$-sampling grid, for $1\le p\le q$.
\State Put $\mathbf{L}:=(L_\x,L_\y,L_\rt)=2\mathbf{K}+1$
\State Generate the $\mathbf{L}$-sampling grid $\mathbf{\Omega}_\mathbf{L}:=\{(x_i,y_j,\theta_l):\mathbf{1}\le(i,j,l)\le\mathbf{L}\}$.
\State Generate $V:=f$ on the $\mathbf{L}$-sampling grid, using values of $f$.
\For{$p=1:q$}
\State Compute $V_p:=S_\mathbf{K}[V,\rho]$ on the sampling grid via Algorithm \ref{SKfgAlg} using FFT.
\State Return $V_p$ with $1\le p\le q$. \Comment{$V_p$ is the approximated value of $V_p(f,\rho)$.}
\State Put  $V:=V_p$.
\EndFor
\end{algorithmic} 
\label{SKVqfgNaiveRecAlg}
\end{algorithm}

The following algorithm reduces actual running time of computing all approximations $V_p[f,\rho]$ by just considering the minimum required computations at each iteration steps of the loop.

\begin{algorithm}[H]
\caption{Fast recursive computation of all approximations of $V_p(f,\rho)$ ($1\le p\le q$)} 
\begin{algorithmic}[1]
\State{\bf input data} Given $q\in\mathbb{N}$, functions $f,\rho:SE(2)\to\mathbb{C}$ supported in $\frac{1}{2(q+1)}$ and $\rho$ radial in translations, and approximation order $\mathbf{K}\in\mathbb{N}^3$.
\State {\bf output result} Approximated values of $V_p(f,\rho)$ on the $\mathbf{L}$-sampling grid, for $1\le p\le q$.
\State Put $\mathbf{L}:=(L_\x,L_\y,L_\rt)=2\mathbf{K}+1$.
\State Load/Generate the weight array $\mathbf{W}\in\mathbb{C}^{\mathbf{L}}$.
\State Generate the $\mathbf{L}$-sampling grid $\mathbf{\Omega}_\mathbf{L}:=\{(x_i,y_j,\theta_l):\mathbf{1}\le(i,j,l)\le\mathbf{L}\}.$
\State Generate $\mathbf{F},\mathbf{P}\in\mathbb{C}^\mathbf{L}$ using $\mathbf{F}(i,j,l):=f(x_i,y_j,\theta_l)$ (resp. $\mathbf{P}(i,j,l):=\rho(x_i,y_j,\theta_l)$).
\State Compute $\widehat{\mathbf{F}},\widehat{\mathbf{P}}\in\mathbb{C}^\mathbf{L}$ using FFT.
\State Put $\mathbf{V}:=\widehat{\mathbf{F}}$.
\For{$p=1:q$}
\State Compute the Hadamard product $\mathbf{H}:=\mathbf{W}\odot\mathbf{V}\odot\widehat{\mathbf{P}}$.
\State Compute $\mathbf{V}_p\in\mathbb{C}^\mathbf{L}$ using FFT via   
$$\mathbf{V}_p:=\frac{1}{(L_\x L_\y L_\rt)^p}\mathrm{iDFT}(\mathbf{H}).$$
\State \Return $\mathbf{V}_p$. \Comment{$\mathbf{V}_p$ is the approximation of $V_p(f,\rho)$.}
\State Put  $\mathbf{V}:=\mathbf{H}$.
\EndFor
\end{algorithmic} 
\label{SKVqfgAlgFastLoop}
\end{algorithm}
\begin{example}
Let $\mathbf{H}:=\mathrm{diag}(0.0143,0.0014)^{-1}$, $s:=0.0071$, and $\nu:=\pi/7$. Suppose $f=f_{\mathbf{H}}^{\nu,s}$ is the associated 3D deformed Gaussian given by (\ref{fHsnu}).  Assume $\sigma^2:=0.0029$ and $\rho$ is the associated multidimensional Gaussian given by (\ref{gbS0}) which is $\pi/7$ shifted in rotations. 
Then $f,\rho$ are approximately supported in $\mathbb{D}_{1/14}$. So, $SE(2)$-convolutions $f\star \rho^{(p)}$ ($1\le p\le 6$) are approximately supported in $\mathbb{D}_{1/2}$. Therefore, $SE(2)$-convolutions $f\star \rho^{(p)}$ ($1\le p\le 6$) can be approximated using Algorithm \ref{SKVqfgAlgFastLoop}.
\begin{figure}[H]
\centering
\advance\leftskip-2cm
\includegraphics[keepaspectratio=true,width=1.2\textwidth, height=\textheight]{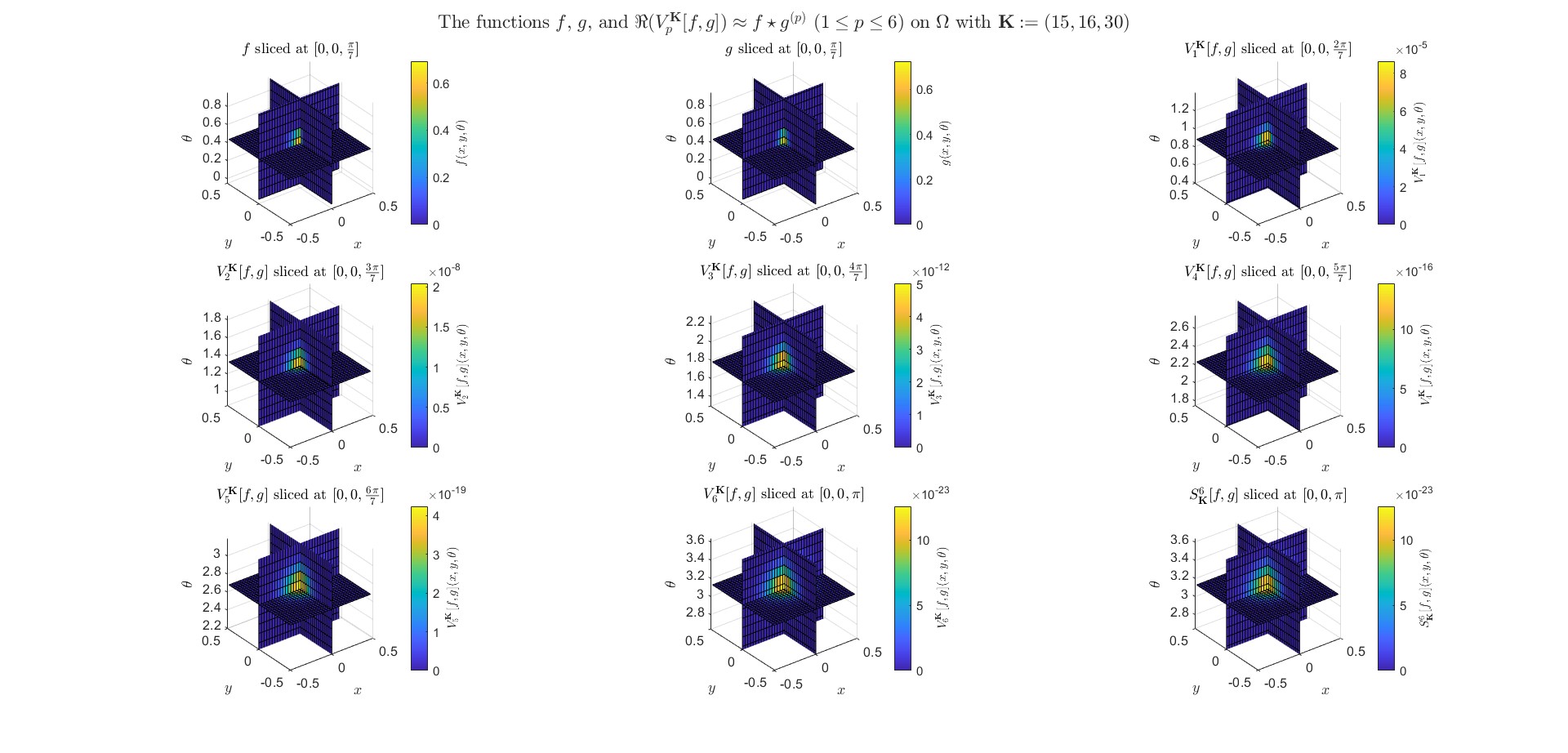}
\caption{The slice plot of $f$, $g=\rho$ and $\Re(V^{\mathbf{K}}_p[f,\rho])\approx f\star \rho^{(p)}$ on the uniform grid of size $31\times 33\times 61$ of the fundamental domain $\Omega$, with $\mathbf{K}:=(15,16,30)$. The functions  $f$, $\rho$ are sliced $[0,0,\frac{\pi}{7}]$ and  $\Re(V^{\mathbf{K}}_p[f,\rho])$ sliced at $[0,0,\frac{(p+1)\pi}{7}]$ for every $1\le p\le 6$.}
\label{fig:SlicCNVfg6}
\end{figure}
\begin{figure}[H]
\centering
\includegraphics[keepaspectratio=true,width=1.35\textwidth, height=1.5\textheight, angle=90,origin=c]{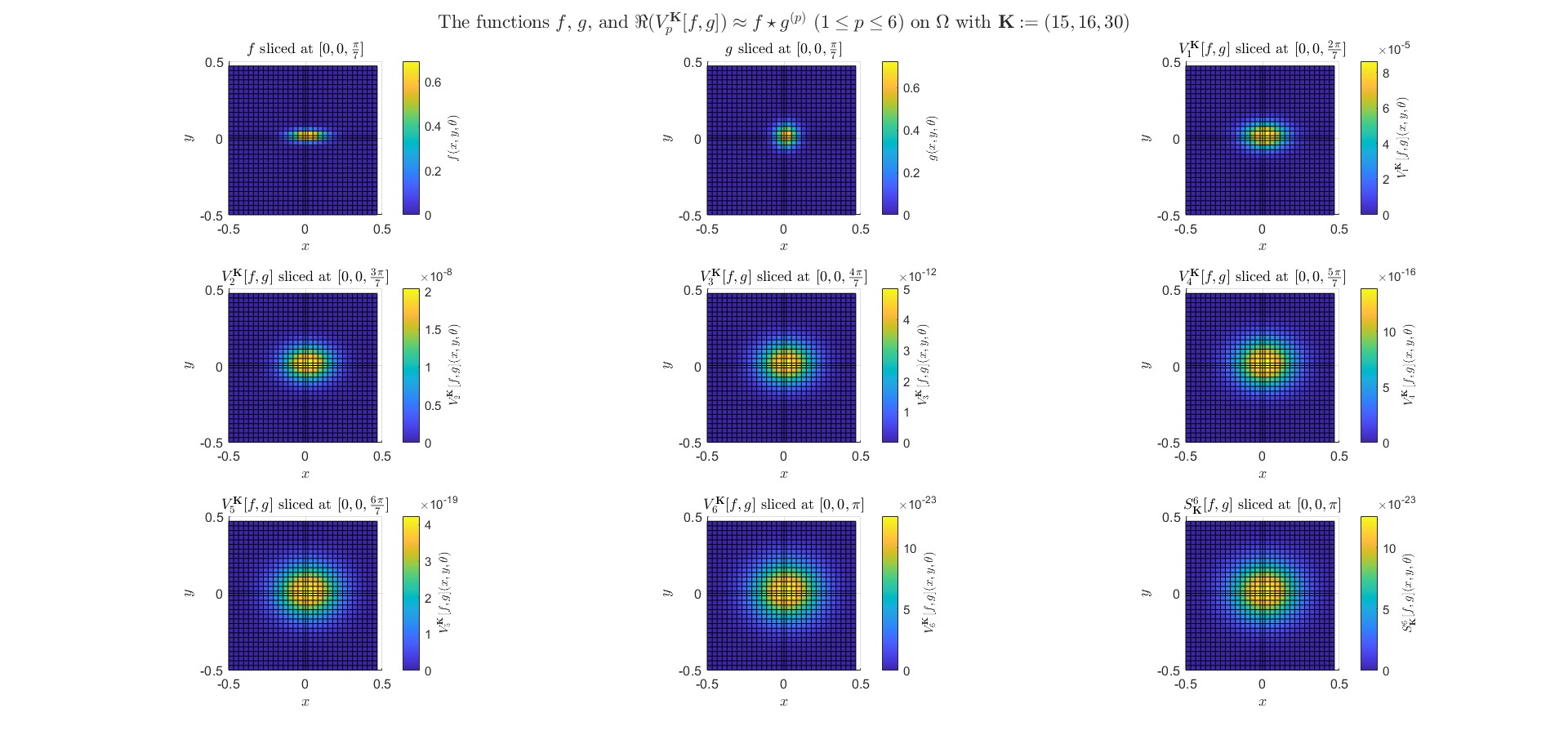}
\caption{The contour plot of $f$, $g=\rho$ and $\Re(V^{\mathbf{K}}_p[f,\rho])\approx f\star \rho^{(p)} (1\le p\le 6)$ on the uniform grid of size $31\times 33\times 61$ of $\Omega$, with $\mathbf{K}:=(15,16,30)$. The functions  $f$, $\rho$ are sliced $[0,0,\frac{\pi}{7}]$ and  $\Re(V^{\mathbf{K}}_p[f,\rho])$ sliced at $[0,0,\frac{(p+1)\pi}{7}]$ for every $1\le p\le 6$.}
\label{fig:ContCNVfg6}
\end{figure}
\end{example}

\newpage
\section{\bf Annex}
\subsection{Finite Fourier coefficients}
This part discusses finite Fourier coefficients as numerical approximation of Fourier coefficients on boxes. Let $a,b\in\mathbb{R}$ and $a<b$. For $k\in\mathbb{Z}$, let $e_k:[a,b]\to\mathbb{C}$ be given by 
\[
e_k(x):=\exp\left(\frac{2\pi\ii kx}{b-a}\right)=e^{\frac{2\pi\ii kx}{b-a}},\hspace{1cm}\ {\rm for}\ x\in[a,b].
\]
Then $(e_k)_{k\in\mathbb{Z}}$ is an orthonormal basis (ONB) for the Hilbert space $L^2[a,b]$, with the inner-product
\[
\langle U,V\rangle:=\frac{1}{b-a}\int_a^bU(x)\overline{V(x)}\D x,\hspace{1cm}\ {\rm for}\ U,V\in L^2[a,b]. 
\]
So, if $U\in L^2[a,b]$ then $U=\sum_{k\in\mathbb{Z}}\widehat{U}[k]e_k$ in $L^2$-sense, with the Fourier coefficient $\widehat{U}[k]$  given by 
\begin{equation}\label{uk}
\widehat{U}[k]:=\langle U,e_k\rangle=\frac{1}{b-a}\int_a^bU(x)\overline{e_k(x)}\D x,
\hspace{0.7cm}\ {\rm for}\ k\in\mathbb{Z}.
\end{equation}
Let $L\in\mathbb{N}$, and $x_i:=a+(i-1)\delta$ with $\delta:=\frac{b-a}{L}$.  
The finite Fourier coefficient of $U\in L^2[a,b]$ at $k\in\mathbb{Z}$ on the uniform grid of size $L$ is given by 
\begin{equation}\label{FFC}
\widehat{U}[k;L]:=\frac{1}{L}\sum_{i=1}^LU(x_i)\overline{e_k(x_i)}.
\end{equation}

Next we show that finite Fourier coefficient (\ref{FFC}) gives exact value of Fourier  coefficients for trigonometric polynomials. 
\begin{lemma}\label{TrigL}
Let $K\in\mathbb{N}$, and $L=2K+1$. Suppose $P$ is a trigonometric polynomial of degree $K$.  Then  
\begin{equation}\label{pHpHK}
\widehat{P}[k]=\widehat{P}[k;L],\hspace{1cm}{\rm for}\ \ |k|\le K. 
\end{equation}
\end{lemma}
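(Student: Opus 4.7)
The plan is to expand $P$ in the basis $(e_m)_{|m| \le K}$, evaluate the finite Fourier coefficient by swapping sums, and reduce the proof to a discrete orthogonality identity for the sampled exponentials. This is a standard aliasing argument, but framed in the normalization of the paper.

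First I would write $P = \sum_{|m| \le K} c_m e_m$ with $c_m \in \mathbb{C}$. By orthonormality of $(e_k)_{k \in \mathbb{Z}}$ in $L^2[a,b]$, we immediately get $\widehat{P}[k] = c_k$ for each $k$ with $|k| \le K$. It then remains to prove that $\widehat{P}[k;L] = c_k$ as well. Substituting into definition \eqref{FFC} and using linearity,
\[
\widehat{P}[k;L] = \frac{1}{L}\sum_{i=1}^{L}\sum_{|m|\le K} c_m e_m(x_i)\overline{e_k(x_i)} = \sum_{|m|\le K} c_m \left(\frac{1}{L}\sum_{i=1}^L e_{m-k}(x_i)\right).
\]

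The core of the argument is then the discrete orthogonality identity
\[
\frac{1}{L}\sum_{i=1}^L e_{n}(x_i) = \begin{cases} 1 & \text{if } n \equiv 0 \pmod{L}, \\ 0 & \text{otherwise}, \end{cases}
\]
for any $n \in \mathbb{Z}$. To check this, note that $x_i = a + (i-1)\delta$ with $\delta = (b-a)/L$, so $e_n(x_i) = e^{2\pi \ii n a /(b-a)} \cdot \zeta^{i-1}$, where $\zeta := e^{2\pi \ii n/L}$. The sum $\sum_{i=1}^L \zeta^{i-1}$ is a geometric series which equals $L$ when $\zeta = 1$ (i.e.\ $L \mid n$) and vanishes otherwise. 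The phase prefactor $e^{2\pi \ii n a /(b-a)}$ cancels against the same factor on the other side when $L \mid n$, but more importantly when $L \mid n$ we get exactly $1$ in the normalized identity above; this requires being careful with the phase, which I expect to be the only mildly delicate bookkeeping step.

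To finish, I apply this identity with $n = m-k$. Since $|m|,|k|\le K$ we have $|m-k| \le 2K < L = 2K+1$, so $m-k \equiv 0 \pmod L$ forces $m = k$. Therefore only the $m=k$ term in the sum above survives, giving $\widehat{P}[k;L] = c_k = \widehat{P}[k]$, which proves \eqref{pHpHK}. The main (and only) subtle point is the phase in the discrete orthogonality identity when the sampling grid starts at a general $a$ rather than at $0$; handling this cleanly by factoring out $e^{2\pi \ii n a/(b-a)}$ from both sides (it appears on the right only through the convention that $n\equiv 0 \bmod L$ makes the phase $e^{2\pi \ii n a/(b-a)}$ evaluate at $n$ with $L\mid n$, so we should verify independence of the index shift before invoking it) avoids any difficulty.
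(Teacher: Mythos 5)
Your proof is correct and follows essentially the same route as the paper's: expand $P$ in the exponential basis, interchange sums, and invoke discrete orthogonality of the sampled exponentials over $L=2K+1$ points, using $|m-k|\le 2K<L$ to eliminate all cross terms. The only caveat is that your general identity $\tfrac{1}{L}\sum_{i=1}^{L}e_{n}(x_i)=1$ for $L\mid n$ is actually false for nonzero multiples of $L$ (the sum equals the phase $e^{2\pi\ii na/(b-a)}$), but this is harmless here since only $n=m-k$ with $|n|\le 2K<L$ occurs, so the sole surviving case is $n=0$ where the phase is trivially $1$ --- precisely the $\kappa^{l-k}\delta_{l,k}$ bookkeeping carried out in the paper's proof.
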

\begin{proof}
Let $k\in\mathbb{Z}$ with $0\le |k|\le K$ be given. Then $P(x)=\sum_{l=-K}^K\widehat{P}[l]e_l(x)$.
In addition, for every $1\le i\le L$, we have $e_l(x_i)=\kappa^l e^{2\pi\ii l(i-1)/L}$, 
where $\kappa:=e^{\frac{2\pi\ii a}{b-a}}$. So, for $1\le i\le L$, we get 
\[
P(x_i)=\sum_{l=-K}^K\widehat{P}[l]e_l(x_i)=\sum_{\ell=-K}^K\widehat{P}[l]\kappa^l e^{2\pi\ii l(i-1)/L}.
\]
Assume $-K\le l\le K$.  Then $|l-k|\le 2K<L$.  Hence, 
$\sum_{i=1}^Le^{2\pi\ii(l-k)(i-1)/L}=L\delta_{l,k}$, implying that 
\begin{align*}
\widehat{P}[k;L]&=\frac{1}{L}\sum_{i=1}^LP(x_i)\kappa^{-k}e^{-2\pi\ii k(i-1)/L}
\\&=\frac{1}{L}\sum_{i=1}^L\left(\sum_{l=-K}^K\widehat{P}[l]\kappa^l e^{2\pi\ii l(i-1)/L}\right)\kappa^{-k}e^{-2\pi\ii k(i-1)/L}
\\&=\sum_{l=-K}^K\widehat{P}[l]\kappa^{l-k} \left(\frac{1}{L}\sum_{i=1}^Le^{2\pi\ii(l-k)(i-1)/L}\right)
=\sum_{l=-K}^K\widehat{P}[l]\kappa^{l-k} \delta_{l,k}=\widehat{P}[k].\qedhere
\end{align*}
\end{proof}

We then conclude the following bound of the absolute error for approximation of $\widehat{U}[k]$ using $\widehat{U}[k;L]$.
 \begin{proposition}\label{mainP}
{\it Let $a<b$, $U\in\mathcal{C}^1[a,b]$ with $U(a)=U(b)$, and $K\in\mathbb{N}$. Then  
\[
\Big|\widehat{U}[k]-\widehat{U}[k;2K+1]\Big|\le\frac{6(b-a)\|U'\|_\infty}{\pi K},\hspace{1cm}{\rm for}\ \ |k|\le K. 
\]
}\end{proposition}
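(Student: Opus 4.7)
The plan is to reduce the bound to an aliasing identity for the finite Fourier coefficient of $U$, and then control the resulting tail using the periodicity hypothesis. First, I would derive the aliasing identity
\[
\widehat{U}[k;L]-\widehat{U}[k]=\sum_{m\neq 0}c_m\,\widehat{U}[k+mL],\qquad c_m:=e^{2\pi imLa/(b-a)},\ |c_m|=1,
\]
obtained by substituting the Fourier expansion $U=\sum_{\ell\in\mathbb{Z}}\widehat{U}[\ell]\,e_\ell$ into the definition (\ref{FFC}) and applying the elementary identity $\frac{1}{L}\sum_{i=1}^L e_{\ell-k}(x_i)=c_m\mathbf{1}_{\ell-k\in L\mathbb{Z}}$ coming from the geometric sum over $L$-th roots of unity together with the offset factor $e^{2\pi i(\ell-k)a/(b-a)}$. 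Lemma \ref{TrigL} is the special case when only finitely many Fourier coefficients are nonzero.

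Second, I would invoke the hypotheses $U\in\mathcal{C}^1[a,b]$ with $U(a)=U(b)$. Integration by parts in (\ref{uk}), whose boundary term vanishes by the periodicity, yields the classical relation $\widehat{U}[j]=\frac{b-a}{2\pi ij}\,\widehat{U'}[j]$ for every $j\neq 0$, so each summand in the aliasing formula can be rewritten as $c_m\widehat{U}[k+mL]=\frac{(b-a)c_m}{2\pi i(k+mL)}\widehat{U'}[k+mL]$, giving a decay of order $|k+mL|^{-1}$.

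The main obstacle is that the trivial $\ell^{1}$ bound $\sum_{m\neq 0}|\widehat{U}[k+mL]|\le\frac{(b-a)\|U'\|_\infty}{2\pi}\sum_{m\neq 0}\frac{1}{|k+mL|}$ produces a divergent harmonic-type series, so bare summation cannot close the argument. I sidestep this by applying the Cauchy--Schwarz inequality directly to the aliased tail:
\[
|\widehat{U}[k;L]-\widehat{U}[k]|\le\frac{b-a}{2\pi}\Bigl(\sum_{m\neq 0}\tfrac{1}{(k+mL)^2}\Bigr)^{1/2}\Bigl(\sum_{m\neq 0}|\widehat{U'}[k+mL]|^2\Bigr)^{1/2}.
\]
The second factor is at most $\|U'\|_\infty$ by Bessel's inequality for $U'\in L^{2}[a,b]$ with the paper's normalized inner product, since $\sum_{j\in\mathbb{Z}}|\widehat{U'}[j]|^2=\frac{1}{b-a}\|U'\|_{L^2}^2\le\|U'\|_\infty^2$. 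For the first factor, the constraints $|k|\le K$ and $L=2K+1$ force $|k+mL|\ge(2|m|-1)K$ for $m\neq 0$, whence
\[
\sum_{m\neq 0}\tfrac{1}{(k+mL)^2}\le\tfrac{2}{K^{2}}\sum_{n=1}^{\infty}\tfrac{1}{(2n-1)^{2}}=\tfrac{\pi^{2}}{4K^{2}}.
\]
Combining the two estimates gives $|\widehat{U}[k]-\widehat{U}[k;2K+1]|\le\frac{(b-a)\|U'\|_\infty}{4K}$, which in particular implies the stated bound $\frac{6(b-a)\|U'\|_\infty}{\pi K}$. I would note in passing that alternative routes---bounding via $\|U-S_{K}U\|_\infty$, or applying Cauchy--Schwarz one step too early---lose either a $\log K$ factor (Lebesgue constant of Fourier partial sums in $L^{\infty}$) or a $\sqrt{K}$ factor, so it is crucial that Cauchy--Schwarz be applied at the level of the aliased Fourier tails rather than to the pointwise remainder.
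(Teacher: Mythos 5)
Your argument is correct, and it reaches the stated bound by a genuinely different route from the paper. The paper's proof is an approximation-theoretic one: it pulls in Jackson's theorem to produce a trigonometric polynomial $P$ of degree $K$ with $\|U-P\|_\infty\le\frac{3(b-a)\|U'\|_\infty}{\pi K}$, uses Lemma~\ref{TrigL} (exactness of the $(2K+1)$-point quadrature on trigonometric polynomials of degree $\le K$) to kill the polynomial contribution, and closes with a triangle inequality that doubles the Jackson constant --- hence the $\frac{6}{\pi}$. You instead make the aliasing (Poisson-summation) structure explicit, $\widehat{U}[k;L]-\widehat{U}[k]=\sum_{m\neq0}c_m\widehat{U}[k+mL]$, convert the tail to $\widehat{U'}$ via integration by parts (where the hypothesis $U(a)=U(b)$ enters, exactly as periodicity enters the paper's use of Jackson), and finish with Cauchy--Schwarz against Bessel's inequality; the observation that $|k+mL|\ge(2|m|-1)K$ and $\sum(2n-1)^{-2}=\pi^2/8$ is the right way to keep the lattice sum at order $K^{-2}$, and your remark that Cauchy--Schwarz must be applied to the aliased tail rather than earlier is well taken. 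Your route is self-contained (no external Jackson-type theorem), yields the sharper constant $\frac{(b-a)\|U'\|_\infty}{4K}$, and explains Lemma~\ref{TrigL} as the degenerate case of the aliasing identity; it would also upgrade automatically to $\mathcal{O}(K^{-q})$ for $\mathcal{C}^q$ periodic data. What the paper's route buys is modularity: the same two ingredients (exactness on trigonometric polynomials plus a sup-norm approximation rate) feed directly into the one-variable-at-a-time induction of Theorem~\ref{UkL}. The only point you should make explicit is the justification for substituting the Fourier expansion into the finite sum and interchanging the two summations: for $U\in\mathcal{C}^1$ with $U(a)=U(b)$ the coefficient relation $\widehat{U}[j]=\frac{b-a}{2\pi\ii j}\widehat{U'}[j]$ together with Cauchy--Schwarz gives $\sum_j|\widehat{U}[j]|<\infty$, so the Fourier series converges absolutely and uniformly to $U$ and the interchange is legitimate; this is a one-line addition, not a gap in the idea.
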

\begin{proof}
Let $\alpha:[-\pi,\pi]\to[a,b]$ be given by $\alpha(t):=\frac{b-a}{2\pi}(t+\pi)+a$ for $t\in[-\pi,\pi]$. Suppose $V:[-\pi,\pi]\to\mathbb{C}$ is defined by $V(t):=U(\alpha(t)),$
for $t\in[-\pi,\pi]$. Then $V\in\mathcal{C}^1[-\pi,\pi]$, $V(-\pi)=V(\pi)$, and $\|V'\|_\infty=\frac{b-a}{2\pi}\|U'\|_\infty$. By Theorem 1.3 of \cite{Jack}, there exists a trigonometric polynomial $T$ of degree $K$ on $[-\pi,\pi]$ such that $\|T-V\|_\infty\le\frac{6\|V'\|_\infty}{K}$. Assume $P(x):=T(\alpha^{-1}(x))$ for $x\in [a,b]$. Then $P$ is a trigonometric polynomial of degree $K$ on $[a,b]$. So, we get 
\begin{equation}\label{JackAlt}
\|U-P\|_\infty=\|V-T\|_\infty\le\frac{6\|V'\|_\infty}{K}=\frac{3(b-a)\|U'\|_\infty}{\pi K}.
\end{equation}
Suppose $|k|\le K$. Using Lemma \ref{TrigL}, $\widehat{P}[k]=\widehat{P}[k;2K+1]$. So,  (\ref{JackAlt}) implies  
\begin{align*}
\left|\widehat{U}[k]-\widehat{U}[k;2K+1]\right|
&\le|\widehat{U}[k]-\widehat{P}[k]|+\left|\widehat{P}[k;2K+1]-\widehat{U}[k;2K+1]\right|
\\&\le\|U-P\|_\infty+\left|\frac{1}{2K+1}\sum_{i=1}^{2K+1}P(x_i)e_k(x_i)-\frac{1}{2K+1}\sum_{i=1}^{2K+1}U(x_i)e_k(x_i)\right|
\\&\le\|U-P\|_\infty+\frac{1}{2K+1}\sum_{i=1}^{2K+1}|P(x_i)-U(x_i)|
\\&\le\|U-P\|_\infty+\frac{1}{2K+1}\sum_{i=1}^{2K+1}\|P-U\|_\infty\le 2\|U-P\|_{\infty}\le\frac{6(b-a)\|U'\|_\infty}{\pi K}.\qedhere
\end{align*}
\end{proof}

\begin{remark}
The notion of finite Fourier coefficient appeared in a similar setting as finite Fourier transforms in \cite{FFC01} for the case of the interval $[0,1]$.
\end{remark}

Let $d>1$ and $\mathbf{a},\mathbf{b}\in\mathbb{R}^d$ with $\mathbf{a}<\mathbf{b}$. Assume $[\mathbf{a},\mathbf{b}]:=\prod_{\ell=1}^d[a_\ell,b_\ell]$  is the $d$-dimensional box, where $\mathbf{a}:=(a_1,\ldots,a_d)^T$ and $\mathbf{b}:=(b_1,\ldots,b_d)^T$. For $\mathbf{k}:=(k_1,\ldots,k_d)^T\in\mathbb{Z}^d$, let $\mathbf{e}_{\mathbf{k}}:[\mathbf{a},\mathbf{b}]\to\mathbb{C}$ be given by 
\vspace{-0.2cm}
\begin{equation}\label{ekxd}
\mathbf{e}_\mathbf{k}(\mathbf{x}):=\prod_{\ell=1}^de_{k_\ell}(x_\ell)=\prod_{\ell=1}^d\exp\left(\frac{2\pi\ii k_\ell x_\ell}{b_\ell-a_\ell}\right),\hspace{0.7cm}{\rm for}\ \mathbf{x}:=(x_1,\ldots,x_d)\in [\mathbf{a},\mathbf{b}].
\end{equation}
Then $(\mathbf{e}_{\mathbf{k}}:\mathbf{k}\in\mathbb{Z}^d)$ is an ONB for $L^2[\mathbf{a},\mathbf{b}]$ and the Fourier coefficient of $U\in L^1[\mathbf{a},\mathbf{b}]$ at $\mathbf{k}\in\mathbb{Z}^d$ is   
\begin{equation}\label{Uk}
\widehat{U}[\mathbf{k}]:=\frac{1}{\mathrm{vol}[\mathbf{a},\mathbf{b}]}\int_{[\mathbf{a},\mathbf{b}]}f(\mathbf{x})\overline{\mathbf{e}_\mathbf{k}(\mathbf{x})}\D\mathbf{x}.
\end{equation}

For $\mathbf{L}=(L_\ell)_{\ell=1}^d\in\mathbb{N}^d$, we define the associated uniform grid $\mathcal{G}_\mathbf{L}\subset [\mathbf{a},\mathbf{b}]$ as 
$\mathcal{G}_\mathbf{L}:=\{\mathbf{x}^\mathbf{i}:1\le\mathbf{i}\le \mathbf{L}\}$, 
where $\mathbf{x}^\mathbf{i}:=(x_\ell^{i_\ell})_{\ell}^d$ with $x_\ell^{i_\ell}:=a_\ell+(i_\ell-1)\delta_\ell$ with $\delta_\ell:=\frac{b_\ell-a_\ell}{L_\ell}$, if 
$\mathbf{i}:=(i_1,\ldots,i_d)$ and $\mathbf{1}\le\mathbf{i}\le\mathbf{L}$. 

Suppose $\mathbf{L}\in\mathbb{N}^d$, $\mathbf{k}\in\mathbb{Z}^d$, and $U:[\mathbf{a},\mathbf{b}]\to\mathbb{C}$ is a function. Define 
\begin{equation}\label{UkLd}
\widehat{U}[\mathbf{k};\mathbf{L}]:=\frac{1}{\det(\mathrm{diag}(\mathbf{L}))}\sum_{\mathbf{x}\in\mathcal{G}_\mathbf{L}}U(\mathbf{x})\overline{\mathbf{e}_\mathbf{k}(\mathbf{x})}.
\end{equation}
Next, we show that $\widehat{U}[\mathbf{k};\mathbf{L}]$ given by (\ref{UkLd}) can be considered as numerical approximation of $\widehat{U}[\mathbf{k}]$.
\begin{theorem}\label{UkL}
Let $\mathbf{K}\in\mathbb{N}^d$ and $\mathbf{k}\in\mathbb{Z}^d$. Assume $U\in\mathcal{C}^1[\mathbf{a},\mathbf{b}]$ is a periodic function. Then 
\[
\left|\widehat{U}[\mathbf{k}]-\widehat{U}[\mathbf{k};2\mathbf{K}+1]\right|\le\frac{6d\|\mathbf{a}-\mathbf{b}\|_\infty\|\nabla U\|_\infty}{\pi\min(\mathbf{K})},
\hspace{1cm} {\rm if}\hspace{.2cm}|\mathbf{k}|\le\mathbf{K}.
\] 
\end{theorem}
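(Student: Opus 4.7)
My plan is to reduce the $d$-dimensional bound to $d$ successive applications of the one-dimensional bound in Proposition \ref{mainP} by means of a telescoping (hybrid) argument across the coordinate directions. The tensor-product structure of the exponentials $\mathbf{e}_\mathbf{k}(\mathbf{x})=\prod_{\ell=1}^d e_{k_\ell}(x_\ell)$ means that replacing the integral by the Riemann-type sum in one variable at a time leaves the other factors as constants of modulus one that factor out of the error estimate.

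Concretely, for $0\le s\le d$ I would define the hybrid quantity
\[
I_s \,:=\, \frac{1}{L_1\cdots L_s}\,\frac{1}{(b_{s+1}-a_{s+1})\cdots(b_d-a_d)}\sum_{i_1=1}^{L_1}\!\cdots\!\sum_{i_s=1}^{L_s}\int_{a_{s+1}}^{b_{s+1}}\!\!\cdots\!\int_{a_d}^{b_d} U(\mathbf{x})\,\overline{\mathbf{e}_\mathbf{k}(\mathbf{x})}\,\D x_{s+1}\cdots\D x_d,
\]
where $L_\ell:=2K_\ell+1$ and $x_\ell=x_\ell^{i_\ell}$ for $\ell\le s$. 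By construction $I_0=\widehat{U}[\mathbf{k}]$ and $I_d=\widehat{U}[\mathbf{k};2\mathbf{K}+1]$, so a triangle inequality across the telescoping sum $I_0-I_d=\sum_{s=1}^d(I_{s-1}-I_s)$ reduces the problem to bounding each single-coordinate replacement.

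To bound $|I_{s-1}-I_s|$, I would fix the indices $i_1,\ldots,i_{s-1}$ and the continuous variables $x_{s+1},\ldots,x_d$ and observe that the difference inside the sum/integral collapses to
\[
C\cdot\bigl(\widehat{V}[k_s;L_s]-\widehat{V}[k_s]\bigr),
\]
where $V(x_s):=U(x_1^{i_1},\ldots,x_{s-1}^{i_{s-1}},x_s,x_{s+1},\ldots,x_d)$ is the $s$-th slice of $U$ and $C=\prod_{\ell<s}\overline{e_{k_\ell}(x_\ell^{i_\ell})}\prod_{\ell>s}\overline{e_{k_\ell}(x_\ell)}$ has $|C|=1$. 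The slice $V$ inherits $\mathcal{C}^1[a_s,b_s]$-regularity from $U$ with $\|V'\|_\infty\le\|\nabla U\|_\infty$, and the periodicity of $U$ gives $V(a_s)=V(b_s)$, so Proposition \ref{mainP} (applicable since $|k_s|\le K_s$) yields
\[
\bigl|\widehat{V}[k_s]-\widehat{V}[k_s;2K_s+1]\bigr|\le\frac{6(b_s-a_s)\|\nabla U\|_\infty}{\pi K_s}\le\frac{6\|\mathbf{a}-\mathbf{b}\|_\infty\|\nabla U\|_\infty}{\pi\min(\mathbf{K})}.
\]
Averaging this uniform bound over the remaining discrete and continuous variables preserves it, so $|I_{s-1}-I_s|\le \tfrac{6\|\mathbf{a}-\mathbf{b}\|_\infty\|\nabla U\|_\infty}{\pi\min(\mathbf{K})}$, and summing over the $d$ steps gives the claimed factor of $d$.

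The argument is largely bookkeeping; the only slightly delicate point is verifying that the slice $V$ inherits the hypotheses of Proposition \ref{mainP} uniformly in the frozen variables. Periodicity of $U$ with respect to the box must be interpreted coordinate-wise so that each slice closes up at $a_s$ and $b_s$, and the gradient bound $\|V'\|_\infty\le\|\nabla U\|_\infty$ is what allows the constant in the 1D estimate to be promoted to the uniform $\|\nabla U\|_\infty$ without losing a dimensional factor beyond the telescoping $d$.
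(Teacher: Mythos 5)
Your proposal is correct and is essentially the paper's own argument: both reduce the $d$-dimensional estimate to $d$ applications of the one-dimensional Jackson-type bound in Proposition \ref{mainP}, applied to coordinate slices of $U$ whose derivative is controlled by $\|\nabla U\|_\infty$ and whose periodicity closes each slice at the endpoints. The only difference is organizational — you unroll the coordinate-by-coordinate replacement as an explicit telescoping sum of hybrid quantities $I_s$, while the paper packages the identical decomposition as an induction on $d$ (peeling off the last coordinate and introducing the partially integrated function $u_{\underline{\mathbf{k}}}(x_d)=\widehat{U_{x_d}}[\underline{\mathbf{k}}]$), and both yield the same factor of $d$.
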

\begin{proof}
We the claim by induction on $d$. For $d=1$, Proposition \ref{mainP} implies the claim. Assume the claim holds for $\mathbb{R}^{d-1}$.
For $\mathbf{x}:=(x_1,\ldots,x_{d-1},x_d)^T\in\mathbb{R}^d$, let $\underline{\mathbf{x}}\in\mathbb{R}^{d-1}$ be $\underline{\mathbf{x}}:=(x_1,\ldots,x_{d-1})$.
Let $U\in\mathcal{C}^1[\mathbf{a},\mathbf{b}]$ be periodic, $|\mathbf{k}|\le\mathbf{K}$, and $\mathbf{I}:=\prod_{\ell=1}^{d-1}[a_\ell,b_\ell]$. For $x_d\in [a_d,b_d]$, let $U_{x_d}:\mathbf{I}\to\mathbb{C}$ be given by 
$U_{x_d}(\underline{\mathbf{x}}):=U(\mathbf{x}),$ and $u_{\underline{\mathbf{k}}}(x_d):=\widehat{U_{x_d}}[\underline{\mathbf{k}}]$. Then $U_{x_d}\in\mathcal{C}^1(\mathbf{I})$,  $u_{\underline{\mathbf{k}}}'(x_d)=\widehat{(\partial _d U)_{{x_d}}}[\underline{\mathbf{k}}]$, and $|u_{\underline{\mathbf{k}}}'(x_d)|\le\|\partial_dU\|_\infty$. So, $\widehat{U}[\mathbf{k}]=\widehat{u_{\underline{\mathbf{k}}}}[k_d]$, and $
\widehat{U}[\mathbf{k};\mathbf{L}]=\frac{1}{L_d}\sum_{i_d=1}^{L_d}\widehat{U_{x_{d}^{i}}}[\underline{\mathbf{k}};\underline{\mathbf{L}}]\overline{e_{k_d}(x_{d}^{i})}$,
for $\mathbf{k}:=(k_1,\ldots,k_d)^T\in\mathbb{Z}^d$.
Hence, we get 
\begin{align*}
\left|\widehat{u_{\underline{\mathbf{k}}}}[k_d;L_d]-\widehat{U}[\mathbf{k};\mathbf{L}]\right|
&=\left|\frac{1}{L_d}\sum_{i_d=1}^{L_d}u_{\underline{\mathbf{k}}}(x_d^{i})\overline{e_{k_d}(x_{d}^{i})}-\frac{1}{L_d}\sum_{i_d=1}^{L_d}\widehat{U_{x_{d}^{i}}}[\underline{\mathbf{k}};\underline{\mathbf{L}}]\overline{e_{k_d}(x_{d}^{i})}\right|
\\&=\left|\frac{1}{L_d}\sum_{i=1}^{L_d}\widehat{U_{x_d^{i}}}[\underline{\mathbf{k}}]\overline{e_{k_d}(x_{d}^{i})}-\frac{1}{L_d}\sum_{i=1}^{L_d}\widehat{U_{x_{d}^{i}}}[\underline{\mathbf{k}};\underline{\mathbf{L}}]\overline{e_{k_d}(x_{d}^{i})}\right|
\\&\le \frac{1}{L_d}\sum_{i=1}^{L_d}\left|\widehat{U_{x_d^{i}}}[\underline{\mathbf{k}}]-\widehat{U_{x_{d}^{i}}}[\underline{\mathbf{k}};\underline{\mathbf{L}}]\right|
\\&\le\frac{6(d-1)}{L_d}\sum_{i=1}^{L_d}\frac{\|\underline{\mathbf{a}}-\underline{\mathbf{b}}\|_\infty\left\|\nabla U_{x_d^i}\right\|_\infty}{\pi\min(\underline{\mathbf{K}})}\le\frac{6(d-1)\|\underline{\mathbf{a}}-\underline{\mathbf{b}}\|_\infty\|\nabla U\|_\infty}{\pi \min(\underline{\mathbf{K}})}.
\end{align*}
Applying Proposition \ref{mainP}, for function $u_{\underline{\mathbf{k}}}$, we obtain
\begin{align*}
\left|\widehat{u_{\underline{\mathbf{k}}}}[k_d]-\widehat{u_{\underline{\mathbf{k}}}}[k_d;L_d]\right|
&\le\frac{6(b_d-a_d)\|u_{\underline{\mathbf{k}}}'\|_\infty}{\pi K_d}\\&\le\frac{6(b_d-a_d)\|\partial_dU\|_\infty}{\pi K_d}\le\frac{6(b_d-a_d)\|\nabla U\|_\infty}{\pi K_d}.
\end{align*}
Then
\begin{align*}
\left|\widehat{U}[\mathbf{k}]-\widehat{U}[\mathbf{k};\mathbf{L}]\right|
&=\left|\widehat{u_{\underline{\mathbf{k}}}}[k_d]-\widehat{U}[\mathbf{k};\mathbf{L}]\right|
\\&\le \left|\widehat{u_{\underline{\mathbf{k}}}}[k_d]-\widehat{u_{\underline{\mathbf{k}}}}[k_d;L_d]\right|+\left|\widehat{u_{\underline{\mathbf{k}}}}[k_d;L_d]-\widehat{U}[\mathbf{k};\mathbf{L}]\right|
\\&\le \frac{6(b_d-a_d)\left\|\nabla U\right\|_\infty}{\pi K_d}+\frac{6(d-1)\|\underline{\mathbf{a}}-\underline{\mathbf{b}}\|_\infty\|\nabla U\|_\infty}{\pi \min(\underline{\mathbf{K}})}
\\&\le\frac{6\|\mathbf{a}-\mathbf{b}\|_\infty\left\|\nabla U\right\|_\infty}{\pi K_d}+\frac{6(d-1)\|\mathbf{a}-\mathbf{b}\|_\infty\|\nabla U\|_\infty}{\pi \min(\underline{\mathbf{K}})}
\\&\le\frac{6\|\mathbf{a}-\mathbf{b}\|_\infty\left\|\nabla U\right\|_\infty}{\pi\min(\mathbf{K})}+\frac{6(d-1)\|\mathbf{a}-\mathbf{b}\|_\infty\|\nabla U\|_\infty}{\pi \min(\mathbf{K})}
\le\frac{6d\|\mathbf{a}-\mathbf{b}\|_\infty\|\nabla U\|_\infty}{\pi\min(\mathbf{K})}.\qedhere
\end{align*}
\end{proof}

\begin{corollary}
{\it  Suppose $K\in\mathbb{N}$ and $\mathbf{k}\in\mathbb{Z}^d$. Assume $U\in\mathcal{C}^1[\mathbf{a},\mathbf{b}]$ is a periodic function. Then 
\[
\left|\widehat{U}[\mathbf{k}]-\widehat{U}[\mathbf{k};2K+1]\right|\le\frac{6d\|\mathbf{a}-\mathbf{b}\|_\infty\|\nabla U\|_\infty}{\pi K},
\hspace{1cm} {\rm if}\hspace{0.2cm}\|\mathbf{k}\|_\infty\le K.
\] 
}\end{corollary}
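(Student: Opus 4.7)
The plan is to obtain this corollary as a direct specialization of Theorem \ref{UkL} to the case where the multi-index truncation parameter is constant in every coordinate. That is, I would set $\mathbf{K} := (K,K,\ldots,K) \in \mathbb{N}^d$ and then verify that the two hypotheses of Theorem \ref{UkL} translate cleanly into the hypotheses stated in the corollary.

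First I would observe that the condition $\|\mathbf{k}\|_\infty \le K$ on $\mathbf{k} \in \mathbb{Z}^d$ is, by the definition of $\|\cdot\|_\infty$, the same as saying $|k_i| \le K$ for every $1 \le i \le d$, which is precisely the componentwise inequality $|\mathbf{k}| \le \mathbf{K}$ that appears as the hypothesis of Theorem \ref{UkL} under this choice of $\mathbf{K}$. Second, with $\mathbf{K} = (K,\ldots,K)$ we have $\min(\mathbf{K}) = K$, and $2\mathbf{K}+1 = (2K+1,\ldots,2K+1)$, so $\widehat{U}[\mathbf{k};2\mathbf{K}+1]$ coincides with the object denoted $\widehat{U}[\mathbf{k};2K+1]$ in the statement (i.e.\ sampling with the same grid size $2K+1$ in each coordinate).

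Plugging these identifications into the bound of Theorem \ref{UkL} yields
\[
\left|\widehat{U}[\mathbf{k}]-\widehat{U}[\mathbf{k};2K+1]\right| \;\le\; \frac{6d\,\|\mathbf{a}-\mathbf{b}\|_\infty\,\|\nabla U\|_\infty}{\pi\,\min(\mathbf{K})} \;=\; \frac{6d\,\|\mathbf{a}-\mathbf{b}\|_\infty\,\|\nabla U\|_\infty}{\pi\,K},
\]
which is exactly the claimed estimate. Since Theorem \ref{UkL} is already established (by induction on $d$ using Proposition \ref{mainP} as the base case), there is no real obstacle here; the proof is a one-line specialization, and the only thing to verify is the translation of the $\ell^\infty$ condition on $\mathbf{k}$ into the componentwise order used in the theorem. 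Accordingly, I would present the proof as a single short paragraph invoking Theorem \ref{UkL} with the constant vector $\mathbf{K} = (K,\ldots,K)$ and noting the two identifications above.
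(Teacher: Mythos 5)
Your proof is correct and is exactly the argument the paper intends: the corollary is stated without proof as an immediate specialization of Theorem \ref{UkL} to the constant vector $\mathbf{K}=(K,\ldots,K)$, for which $\min(\mathbf{K})=K$, $2\mathbf{K}+1=(2K+1,\ldots,2K+1)$, and $|\mathbf{k}|\le\mathbf{K}$ is equivalent to $\|\mathbf{k}\|_\infty\le K$. Nothing is missing.
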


\subsection{Zero-padded DFT}
Let $K\in\mathbb{N}$ and $L:=2K+1$. Then 
\begin{equation}\label{tauKk}
\tau_L(k)=\left\{\begin{array}{ll}
k & {\rm if}\ \ \ \ \ \ 0\le k\le K\\
L+k & {\rm if}\ -K\le k\le -1
\end{array}
\right..
\end{equation} 

\begin{proposition}\label{LtoN1D}
{\it Let $K,N\in\mathbb{N}$ and $N\ge L:=2K+1$. Suppose 
$(c_k)_{k=-K}^K\subset\mathbb{C}$ and $\mathbf{x}\in\mathbb{C}^{L}$. Then 
\[
\sum_{k=-K}^Kc_k\mathbf{x}(\tau_L(k)+1)e^{2\pi\ii k(\ell-1)/N}=N\Check{\mathbf{z}}(\ell), \hspace{0.5cm}{\rm for}\hspace{0.2cm}1\le \ell\le N, 
\]
where $\mathbf{z}\in\mathbb{C}^N$ is given by 
\[
\mathbf{z}(n)=\left\{\begin{array}{lll}
c_{n-1}\mathbf{x}(n) & {\rm if}\ \ \ \ \ \ 1\le n\le K+1\\
0 & {\rm if}\ K+2\le n\le N-K\\
c_{n-1-N}\mathbf{x}(L+n-N) & {\rm if}\ \ \  N-K+1\le n\le N
\end{array}
\right..
\]
}\end{proposition}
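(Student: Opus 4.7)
The plan is to verify the identity by splitting the sum over $k\in\{-K,\ldots,K\}$ into a nonnegative part $k=0,\ldots,K$ and a strictly negative part $k=-K,\ldots,-1$, applying the explicit formula (\ref{tauKk}) for $\tau_L$ on each piece, and then re-indexing so that both resulting sums line up with the iDFT formula (\ref{3iDFT}) for $\check{\mathbf{z}}$. Since $\mathbf{z}$ is defined to be zero on the middle block $K+2\le n\le N-K$, after re-indexing the two nonzero pieces should fill exactly the index ranges $1\le n\le K+1$ and $N-K+1\le n\le N$ that appear in the definition of $\mathbf{z}$.

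Concretely, I would first treat the sum $\sum_{k=0}^{K}c_k\mathbf{x}(\tau_L(k)+1)e^{2\pi\ii k(\ell-1)/N}$. By (\ref{tauKk}), $\tau_L(k)=k$ on this range, so substituting $n:=k+1$ turns this into $\sum_{n=1}^{K+1}c_{n-1}\mathbf{x}(n)e^{2\pi\ii(n-1)(\ell-1)/N}$, which matches $\sum_{n=1}^{K+1}\mathbf{z}(n)e^{2\pi\ii(n-1)(\ell-1)/N}$. Next, for $k=-K,\ldots,-1$, (\ref{tauKk}) gives $\tau_L(k)=L+k$, hence $\mathbf{x}(\tau_L(k)+1)=\mathbf{x}(L+k+1)$. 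Substituting $n:=k+N+1$ shifts the index range to $N-K+1\le n\le N$ with $k=n-1-N$ and $L+k+1=L+n-N$.

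The one genuinely substantive step, the step I expect to be the main (though still short) obstacle, is handling the exponential after the negative-$k$ re-indexing. After substitution the exponent becomes $e^{2\pi\ii(n-1-N)(\ell-1)/N}$, and one must observe that
\[
e^{2\pi\ii(n-1-N)(\ell-1)/N}=e^{2\pi\ii(n-1)(\ell-1)/N}\cdot e^{-2\pi\ii(\ell-1)}=e^{2\pi\ii(n-1)(\ell-1)/N},
\]
using $\ell-1\in\mathbb{Z}$. This $N$-periodicity in the frequency index is precisely what allows the zero-padded DFT to represent the frequencies $k<0$ via the high-index block $n>N-K$.

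Combining the two re-indexed pieces and inserting the zero block in between yields
\[
\sum_{k=-K}^{K}c_k\mathbf{x}(\tau_L(k)+1)e^{2\pi\ii k(\ell-1)/N}=\sum_{n=1}^{N}\mathbf{z}(n)e^{2\pi\ii(n-1)(\ell-1)/N}=N\check{\mathbf{z}}(\ell),
\]
by the definition of the iDFT in (\ref{3iDFT}). This completes the proof, and along the way it also explains structurally why the array $\mathbf{Q}_f$ in (\ref{Qf}) is built by zero-padding the DFT block into the corners of an $\mathbf{N}$-sized array: the construction is the multivariate analogue of $\mathbf{z}$, obtained by applying the present proposition in each coordinate direction.
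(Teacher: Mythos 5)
Your proof is correct and follows essentially the same route as the paper's: split the sum at $k=0$, apply (\ref{tauKk}) on each piece, re-index the negative block to $N-K+1\le n\le N$, and absorb the factor $e^{-2\pi\ii(\ell-1)}$ using $N$-periodicity of the exponential to land on the iDFT formula. The only difference is that you make the periodicity step explicit where the paper leaves it silent.
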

\begin{proof}
Assume that $\mathbf{x}\in\mathbb{C}^L$. Let $1\le \ell\le N$ be given. Then using (\ref{tauKk}) we obtain 
\begin{align*}
\sum_{k=-K}^Kc_k\mathbf{x}(\tau_L(k)+1)e^{\frac{2\pi\ii k(\ell-1)}{N}}
&=\sum_{k=-K}^{-1}c_k\mathbf{x}(\tau_L(k)+1)e^{\frac{2\pi\ii k(\ell-1)}{N}}+\sum_{k=0}^Kc_k\mathbf{x}(\tau_L(k)+1)e^{\frac{2\pi\ii k(\ell-1)}{N}}
\\&=\sum_{k=-K}^{-1}c_k\mathbf{x}(L+k+1)e^{\frac{2\pi\ii k(\ell-1)}{N}}+\sum_{k=0}^Kc_k\mathbf{x}(k+1)e^{\frac{2\pi\ii k(\ell-1)}{N}}
\\&=\sum_{k=N-K+1}^{N}c_{k-1-N}\mathbf{x}(L+k-N)e^{\frac{2\pi\ii(k-1)(\ell-1)}{N}}+\sum_{k=1}^{K+1}c_{k-1}\mathbf{x}(k)e^{\frac{2\pi\ii(k-1)(\ell-1)}{N}}
\\&=\sum_{n=1}^N\mathbf{z}(n)e^{\frac{2\pi\ii(n-1)(\ell-1)}{N}}=N\Check{\mathbf{z}}(\ell).
\end{align*}
\end{proof}
\begin{corollary}\label{LtoN1D+1}
{\it Let $K,N\in\mathbb{N}$ and $N\ge L:=2K+1$. Suppose that 
$\mathbf{x}\in\mathbb{C}^{L}$ and $1\le \ell\le N$. Then 
\[
\sum_{k=-K}^K\mathbf{x}(\tau_L(k)+1)e^{2\pi\ii k(\ell-1)/N}=N\Check{\widetilde{\mathbf{x}}}(\ell),
\]
where $\widetilde{\mathbf{x}}\in\mathbb{C}^N$ is given by 
\[
\widetilde{\mathbf{x}}(n):=\left\{\begin{array}{lll}
\mathbf{x}(n) & {\rm if}\ \ \ \ \ \ 1\le n\le K+1\\
0 & {\rm if}\ K+2\le n\le N-K\\
\mathbf{x}(L+n-N) & {\rm if}\ \ \  N-K+1\le n\le N
\end{array}
\right..
\]
}\end{corollary}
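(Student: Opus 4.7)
The plan is to derive Corollary \ref{LtoN1D+1} as an immediate specialization of Proposition \ref{LtoN1D}. The proposition is already formulated for an arbitrary coefficient sequence $(c_k)_{k=-K}^{K}\subset\mathbb{C}$ and produces the inverse-DFT-style identity
\[
\sum_{k=-K}^{K} c_k\,\mathbf{x}(\tau_L(k)+1)\,e^{2\pi\ii k(\ell-1)/N} \;=\; N\,\Check{\mathbf{z}}(\ell),
\]
where the padded vector $\mathbf{z}\in\mathbb{C}^N$ is formed from $c_k\,\mathbf{x}(\cdot)$ on two blocks at the ends of $\{1,\dots,N\}$ and zero in the middle.

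First, I would specialize to the constant sequence $c_k \equiv 1$ for every $-K\le k\le K$. With this choice, the definition of $\mathbf{z}$ in Proposition \ref{LtoN1D} reduces to
\[
\mathbf{z}(n)=\begin{cases}
\mathbf{x}(n) & \text{if } 1\le n\le K+1,\\
0 & \text{if } K+2\le n\le N-K,\\
\mathbf{x}(L+n-N) & \text{if } N-K+1\le n\le N,
\end{cases}
\]
which is precisely the vector $\widetilde{\mathbf{x}}$ appearing in the statement of the corollary.

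Substituting $c_k=1$ into the identity of Proposition \ref{LtoN1D} therefore yields
\[
\sum_{k=-K}^{K}\mathbf{x}(\tau_L(k)+1)\,e^{2\pi\ii k(\ell-1)/N} \;=\; N\,\Check{\widetilde{\mathbf{x}}}(\ell),
\]
for every $1\le\ell\le N$, which is exactly the claim. No genuine obstacle is expected: since the proposition already packages the reindexing $\tau_L$, the case $N\ge L$, and the zero-padding structure, the corollary follows by inspection. The only item worth a short verification is matching the index ranges $1\le n\le K+1$ and $N-K+1\le n\le N$ with the images of $\{0,\dots,K\}$ and $\{-K,\dots,-1\}$ under $k\mapsto \tau_L(k)+1$, but this is handled verbatim by the proof of Proposition \ref{LtoN1D}.
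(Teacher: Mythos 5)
Your proof is correct and is exactly the intended derivation: the paper states this as an immediate corollary of Proposition \ref{LtoN1D} (with no separate proof), obtained by taking $c_k\equiv 1$, under which the vector $\mathbf{z}$ of the proposition becomes the zero-padded vector $\widetilde{\mathbf{x}}$ verbatim.
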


\begin{corollary}\label{LtoN1D-1}
{\it Let $K,N\in\mathbb{N}$ and $N\ge L:=2K+1$. Suppose that 
$\mathbf{x}\in\mathbb{C}^{L}$ and $1\le \ell\le N$. Then 
\[
\sum_{k=-K}^K(-1)^k\mathbf{x}(\tau_L(k)+1)e^{2\pi\ii k(\ell-1)/N}=N\Check{\widetilde{\pm\mathbf{x}}}(\ell),
\]
where $\widetilde{\pm\mathbf{x}}\in\mathbb{C}^N$ is given by 
\[
\widetilde{\pm\mathbf{x}}(n):=\left\{\begin{array}{lll}
(-1)^{n-1}\mathbf{x}(n) & {\rm if}\ \ \ \ \ \ 1\le n\le K+1\\
0 & {\rm if}\ K+2\le n\le N-K\\
(-1)^{n-N-1}\mathbf{x}(L+n-N) & {\rm if}\ \ \  N-K+1\le n\le N
\end{array}
\right..
\]
}\end{corollary}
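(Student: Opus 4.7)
The plan is to obtain this corollary as an immediate specialization of Proposition \ref{LtoN1D}, taking the coefficient sequence to be $c_k := (-1)^k$ for $-K \le k \le K$. With this choice the LHS of the identity in Proposition \ref{LtoN1D} reduces verbatim to the LHS of the corollary, so the proposition gives $\sum_{k=-K}^K (-1)^k \mathbf{x}(\tau_L(k)+1) e^{2\pi\ii k(\ell-1)/N} = N\check{\mathbf{z}}(\ell)$, where $\mathbf{z}\in\mathbb{C}^N$ is the piecewise array constructed there. The entire content of the proof then reduces to verifying the pointwise identity $\mathbf{z} = \widetilde{\pm\mathbf{x}}$.

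I would verify this branch by branch. For $1 \le n \le K+1$, Proposition \ref{LtoN1D} gives $\mathbf{z}(n) = c_{n-1}\mathbf{x}(n) = (-1)^{n-1}\mathbf{x}(n)$, matching the first branch of $\widetilde{\pm\mathbf{x}}$. On the middle block $K+2 \le n \le N-K$, both arrays vanish by definition. For $N-K+1 \le n \le N$, Proposition \ref{LtoN1D} yields $\mathbf{z}(n) = c_{n-1-N}\mathbf{x}(L+n-N) = (-1)^{n-1-N}\mathbf{x}(L+n-N)$, and since $(-1)^{n-1-N} = (-1)^{n-N-1}$ this coincides with the third branch of $\widetilde{\pm\mathbf{x}}$. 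Hence $\mathbf{z} = \widetilde{\pm\mathbf{x}}$ and the claim is established.

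There is no real obstacle here; the corollary is bookkeeping that records the particular alternating-sign instance of Proposition \ref{LtoN1D}. The only point worth emphasizing in the write-up is the sign identity $(-1)^{n-1-N} = (-1)^{n-N-1}$ that folds the alternating weight $(-1)^k$ into the wrap-around block of the zero-padded DFT input. This is precisely the form needed to justify the matrix expressions in (\ref{MatSKfonG}) and (\ref{MatSKfgonG}), where the factor $(-1)^{k_1+k_2}$ appearing in (\ref{fkLvM}) arises because the fundamental domain $\Omega$ is centered at the origin rather than starting from $0$.
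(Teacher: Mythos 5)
Your proposal is correct and is exactly the intended argument: the paper states Corollary \ref{LtoN1D-1} without proof as the specialization $c_k:=(-1)^k$ of Proposition \ref{LtoN1D}, and your branch-by-branch check that $\mathbf{z}=\widetilde{\pm\mathbf{x}}$ (noting $n-1-N=n-N-1$) is the routine verification that makes this explicit.
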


\vspace{0.5cm}

{\bf Concluding remarks.}
This paper discussed a fast and accurate numerical scheme for approximating functions on the right coset space $\mathbb{Z}^2\backslash SE(2)$ and convolutions of $SE(2)$ on the right coset space $\mathbb{Z}^2\backslash SE(2)$. 

In contrast to standard group-theoretic Fourier/Plancherel theory of the unimodular non-Abelian group $SE(2)$, where the spectrum of functions with compact support is neither compactly supported nor discrete, in this paper an alternative numerical approximation method is developed in which Fourier reconstruction formulas with discrete spectrum $\mathbb{Z}^3$ are constructed. The constructive method and the developed numerical scheme have several advantageous. To begin with, the Fourier reconstruction formula is by nature discrete. In addition, due to the structure of the numerical scheme it is efficient and can be implemented using fast Fourier algorithms (FFT).

The paper investigated compatibility of the fast numerical scheme for $SE(2)$-convolutions with functions which are radial in translations. In this direction, the convolutional finite Fourier series are developed to approximate $SE(2)$-convolution of arbitrary functions with functions which are radial in translations. The paper is concluded by discussing capability of the numerical scheme to develop fast algorithms for approximating multiple convolutions with functions with are radial in translations. \\

{\bf Acknowledgments.}
This research is supported by University of Delaware Startup grants. The findings and opinions expressed here are only those of the authors, and not of the funding agency.
\bibliographystyle{plain}
\bibliography{FastRadialCNV-SE2}
\end{document}